      \numberwithin{equation}{section}
\DeclareSymbolFont{usualmathcal}{OMS}{cmsy}{m}{n}
\DeclareSymbolFontAlphabet{\mathcal}{usualmathcal}
\newcommand{\mylabel}[2]{#2\def\@currentlabel{#2}\label{#1}}
\definecolor{cornellred}{rgb}{0.7, 0.11, 0.11}
\definecolor{britishracinggreen}{rgb}{0.0, 0.26, 0.15}
\definecolor{cobalt}{rgb}{0.0, 0.28, 0.67}
\newcommand{\BA}{{\mathbb{A}}}
\newcommand{\BG}{{\mathbb{G}}}
\newcommand{\BN}{{\mathbb{N}}}
\newcommand{\BP}{{\mathbb{P}}}
\newcommand{\BQ}{{\mathbb{Q}}}
\newcommand{\BZ}{{\mathbb{Z}}}
\newcommand{\CA}{{\mathcal A}}
\newcommand{\CE}{{\mathcal E}}
\newcommand{\CF}{{\mathcal F}}
\newcommand{\CG}{{\mathcal G}}
\newcommand{\CL}{{\mathcal L}}
\newcommand{\CS}{{\mathcal S}}
\newcommand{\CX}{{\mathcal X}}
\newcommand{\FS}{{\mathfrak S}}
\newcommand{\bfk}{\mathbf{k}}
\newcommand{\simto}{\,\widetilde{\to}\,}
\newcommand{\into}{\hookrightarrow}
\newcommand{\onto}{\twoheadrightarrow}
\newcommand{\CCoh}{\mathscr{C}\kern-0.25em {o}\kern-0.2em{h}}
\newcommand{\boldit}[1]{\boldsymbol{#1}}
\newcommand{\Isom}{\mathbf{Isom}}
\newcommand*{\defeq}{\mathrel{\vcenter{\baselineskip0.5ex \lineskiplimit0pt
                     \hbox{\scriptsize.}\hbox{\scriptsize.}}}%
                     =}
\DeclareMathOperator{\cycle}{cycle}
\DeclareMathOperator{\Quot}{Quot}
\DeclareMathOperator{\red}{red}
\DeclareMathOperator{\nilp}{nilp}
\DeclareMathOperator{\Sym}{Sym}
\DeclareMathOperator{\Coh}{Coh}
\DeclareMathOperator{\QCoh}{QCoh}
\DeclareMathOperator{\GL}{GL}
\DeclareMathOperator{\length}{length}
\DeclareMathOperator{\pr}{pr}
\DeclareMathOperator{\Mor}{Mor}
\DeclareMathOperator{\Rep}{Rep}
\newcommand{\cE}{\curly E}
\newcommand{\cF}{\curly F}
\newcommand{\cG}{\curly G}
\DeclareFontFamily{OT1}{rsfs}{}
\DeclareFontShape{OT1}{rsfs}{n}{it}{<-> rsfs10}{}
\DeclareMathAlphabet{\curly}{OT1}{rsfs}{n}{it}
\newcommand\Hom{\operatorname{Hom}}
\newcommand{\lHom}{\mathscr{H}\kern-0.3em {o}\kern-0.2em{m}}
\newcommand{\RHom}{{\mathbf{R}\kern-0.07em\mathrm{Hom}}}
\newcommand{\RRlHom}{\mathbf{R}\kern-0.025em\mathscr{H}\kern-0.3em {o}\kern-0.2em{m}}
\newcommand{\Ext}{\operatorname{Ext}}
\newcommand{\lExt}{{\mathscr{E}\kern-0.2em xt}}
\newcommand{\End}{\operatorname{End}}
\newcommand{\lEnd}{\mathscr{E}\kern-0.15em {n}\kern-0.1em{d}}
\newcommand{\id}{\operatorname{id}}
\newcommand{\Spec}{\operatorname{Spec}}
\newcommand{\Proj}{\operatorname{Proj}}
\newcommand{\Supp}{\operatorname{Supp}}
\newcommand{\supp}{\mathsf{supp}}
\newcommand{\Pic}{\mathop{\rm Pic}\nolimits}
\newcommand{\HH}{\mathrm{H}}
\newcommand{\OO}{\mathscr O}
\newcommand{\inj}{\mathsf{inj}}
\tikzset{commutative diagrams/arrow style=math font}
\tikzset{commutative diagrams/.cd,
mysymbol/.style={start anchor=center,end anchor=center,draw=none}}
\newcommand\MySymb[2][\square]{%
  \arrow[mysymbol]{#2}[description]{#1}}
\tikzset{
shift up/.style={
to path={([yshift=#1]\tikztostart.east) -- ([yshift=#1]\tikztotarget.west) \tikztonodes}
}
}
\theoremstyle{definition}
\newtheorem*{lemma*}{Lemma}
\newtheorem*{theorem*}{Theorem}
\newtheorem*{example*}{Example}
\newtheorem*{fact*}{Fact}
\newtheorem*{notation*}{Notation}
\newtheorem*{definition*}{Definition}
\newtheorem*{prop*}{Proposition}
\newtheorem*{remark*}{Remark}
\newtheorem*{corollary*}{Corollary}
\newtheorem*{conventions*}{Conventions}
\newtheorem{definition}{Definition}[section]
\newtheorem{example}[definition]{Example}
\newtheorem{notation}[definition]{Notation}
\newtheorem{exercise}[definition]{Exercise}
\newtheorem{remark}[definition]{Remark}
\newtheoremstyle{thm} % <name> % (ambienti con dimostrazione)
        {3mm}% <Space above>
        {3mm}% <Space below>
        {\slshape}% <Body font> % 
        {0mm}% <Indent amount>
        {\bfseries}% <Theorem head font>
        {.}% <Punctuation after theorem head>
        {1mm}% <Space after theorem head>
        {}% <Theorem head spec (can be left empty, meaning 'normal')> 
\theoremstyle{thm}
\newtheorem{theorem}[definition]{Theorem}
\newtheorem{corollary}[definition]{Corollary}
\newtheorem{lemma}[definition]{Lemma}
\newtheorem{prop}[definition]{Proposition}
\newtheorem{thm}{Theorem}
\newcommand{\Ab}{\mathbb A}
\newcommand{\cA}{\mathcal A}
\newcommand{\cL}{\mathcal L}
\title[On the stack of 0-dimensional coherent sheaves: structural aspects]{On the stack of 0-dimensional coherent sheaves: \\ structural aspects}
\author{Barbara Fantechi, Andrea T. Ricolfi}
\begin{document}

\begin{abstract}
Let $X$ be a quasiprojective scheme. In this expository note we collect a series of useful structural results on the stack $\CCoh^n(X)$ parametrising $0$-dimensional coherent sheaves of length $n$ over $X$. For instance, we discuss its functoriality (in particular its behaviour along \'etale maps), the support morphism to $\Sym^n(X)$, and its relationship with the Quot scheme of points $\Quot_X(\CE,n)$ for fixed $\CE\in \Coh(X)$.
\end{abstract}

\maketitle

{\hypersetup{linkcolor=black}\tableofcontents}

\section{Introduction}

Let $X$ be a quasiprojective scheme defined over an algebraically closed field $\bfk$, and let $n\ge 0$ be an integer. The purpose of this expository paper is to provide a self-contained introduction to the algebraic stacks 
\[
\CCoh^n(X)
\]
parametrising coherent sheaves $\CF \in \Coh(X)$ with 0-dimensional support and length $\chi(\CF)=n$. %We show  that they have a global quotient structure (cf.~\Cref{prop:Coh^n-global-quotient}), we investigate their relationship with $\Sym^n(X)$ (cf.~\Cref{sec:Coh-Chow-naive}) and their functorial behaviour under \'etale morphisms (cf.~\Cref{lemma:etale1}), leading to a direct characterisation of their smoothness (cf.~\Cref{thm:smoothness-of-Coh}).

Our main aim is to provide a readable introduction to this topic, with the purpose of helping researchers new to the field in approaching the current literature.

Our viewpoint, motivating our interest in these stacks, is that whenever a moduli space (such as Hilbert schemes and Quot schemes of points) involves a flat family of $0$-dimensional, coherent sheaves with proper support, by definition it has a natural morphisms into $\CCoh^n(X)$ --- see e.g.~\Cref{lemma:quot-to-coh} --- and to successfully study such moduli spaces (e.g.~via their invariants) one might want to deal with $\CCoh^n(X)$ first and then pull the results back to the moduli space of interest. This is for instance the point of view of \cite{Huang-Jiang}, which provides an interesting motivic relation (see Theorem 1.7 in loc.~cit) between punctual Quot schemes on a variety and the punctual stacks of coherent sheaves over it.

\smallbreak
We now briefly summarise the main contents of this paper.

Let $X$ be a quasiprojective scheme over $\bfk$. In \Cref{sec:families} we introduce families of coherent sheaves over $X$ with proper support, leading directly to the definition of the stack $\CCoh(X)$ of coherent sheaves in \Cref{sec:coh(X)}. We then proceed defining three important \emph{open substacks} of $\CCoh(X)$, namely
\begin{itemize}
\item [\mylabel{substack-1}{(1)}] the open substack $\CCoh(X)_U \subset \CCoh(X)$ of sheaves with support contained in a fixed open subscheme $U\subset X$ (cf.~\Cref{lemma:supp-U}),
\item [\mylabel{substack-2}{(2)}] the open substack $\CCoh_m(X) \subset \CCoh(X)$ of $m$-regular sheaves in the sense of Castelnuovo--Mumford (cf.~\Cref{prop:m-reg-is-open}), and 
\item [\mylabel{substack-3}{(3)}] the open (and closed) substack $\CCoh^P(X) \subset \CCoh(X)$ of sheaves with fixed Hilbert polynomial $P \in \BQ[z]$ (cf.~\Cref{prop:hilb-poly-constant}).
\end{itemize}
To make sense of \ref{substack-2} and \ref{substack-3}, we require $X$ to be projective.
We then prove the following structural result, realising $\CCoh^P(X)$ as a direct limit of its open substacks $\CCoh_m^P(X) = \CCoh^P(X) \cap \CCoh_m(X)$.

\begin{thm}[\Cref{thm:global-quot-1}]\label{mainthm1}
Let $X$ be a projective variety, $P \in \BQ[z]$ a polynomial. The stack $\CCoh^P(X)$ has an exhaustion by open substacks
\[
\cdots \into \CCoh^P_m(X) \into \CCoh^P_{m+1}(X) \into \cdots
\] 
where each open substack $\CCoh^P_m(X)$ is a global quotient
\[
\CCoh^P_m(X) \cong [U_m^P / \GL_{P(m)}]
\]
of the open subscheme $U_m^P \subset \Quot_{X}(\OO_{X}(-m)^{\oplus P(m)},P)$ parametrising isomorphism classes of surjections $\theta \colon \OO_{X}(-m)^{\oplus P(m)} \onto \CF$ such that $\CF$ is $m$-regular and $\HH^0(X,\theta(m))$ is an isomorphism.
\end{thm}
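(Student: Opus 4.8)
The plan is to build the statement in two separate pieces: first the exhaustion by open substacks, then the global quotient description of each $\CCoh^P_m(X)$. For the exhaustion, I would invoke the Castelnuovo--Mumford regularity formalism: since $X$ is projective and we are looking at sheaves with a fixed Hilbert polynomial $P$, there is a uniform bound $m_0 = m_0(P)$ such that every coherent sheaf with Hilbert polynomial $P$ is $m$-regular for all $m \ge m_0$ (this is the standard boundedness statement; for $0$-dimensional sheaves it is essentially immediate, and in general it follows from boundedness of the family of such sheaves). Combined with \Cref{prop:m-reg-is-open}, which says $\CCoh_m(X) \subset \CCoh(X)$ is open, and with the evident inclusion $\CCoh_m(X) \subset \CCoh_{m+1}(X)$ (an $m$-regular sheaf is $(m+1)$-regular), we get the chain of open immersions; the boundedness says this chain exhausts $\CCoh^P(X)$, i.e. $\CCoh^P(X) = \bigcup_{m \ge m_0} \CCoh^P_m(X)$.

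For the quotient presentation, fix $m$ and set $N = P(m)$. The key observation is that if $\CF$ is $m$-regular with Hilbert polynomial $P$, then $\HH^0(X,\CF(m))$ has dimension $P(m) = N$, it is globally generated, and moreover $\HH^i(X,\CF(m)) = 0$ for $i>0$; consequently any choice of basis of $\HH^0(X,\CF(m))$ yields a surjection $\OO_X(-m)^{\oplus N} \onto \CF$, and conversely the surjections $\theta$ for which $\HH^0(X,\theta(m))$ is an isomorphism are exactly those arising this way. This identifies $U_m^P \subset \Quot_X(\OO_X(-m)^{\oplus N}, P)$ (an open subscheme, being defined by the open conditions "$\CF$ is $m$-regular" and "$\HH^0(\theta(m))$ is iso", the latter open by semicontinuity plus the equality of dimensions forced by regularity) with the scheme of pairs $(\CF, \text{basis of } \HH^0(\CF(m)))$, i.e. a $\GL_N$-frame bundle over the moduli problem. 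The group $\GL_N = \GL_{P(m)}$ acts on $U_m^P$ by changing the basis, equivalently by precomposing $\theta$ with automorphisms of $\OO_X(-m)^{\oplus N}$, and the quotient stack $[U_m^P/\GL_N]$ represents the functor sending $T$ to families of $m$-regular sheaves on $X \times T$ flat over $T$ with fiberwise Hilbert polynomial $P$ --- which is precisely $\CCoh^P_m(X)$.

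Concretely I would prove the isomorphism $\CCoh^P_m(X) \cong [U_m^P/\GL_N]$ by exhibiting mutually inverse morphisms of stacks, or better by comparing the functors of points. Given a $T$-family $\CF$ on $X_T := X \times T$ that is fiberwise $m$-regular with Hilbert polynomial $P$, cohomology and base change (applicable because $\HH^{>0}$ of each fiber twisted by $m$ vanishes) shows that $\pi_{T*}(\CF(m))$ is locally free of rank $N$ on $T$ and commutes with base change; choosing a local trivialization gives, via the counit $\pi_T^*\pi_{T*}(\CF(m)) \to \CF(m)$ (surjective by $m$-regularity, hence global generation, in each fiber, thus surjective by Nakayama), a $T$-point of $U_m^P$, well-defined up to the $\GL_N$-action --- hence a morphism $\CCoh^P_m(X) \to [U_m^P/\GL_N]$. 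In the other direction, a $T$-point of $U_m^P$ is a flat family with a surjection $\theta$, and forgetting $\theta$ lands in $\CCoh^P_m(X)$; this descends along the $\GL_N$-torsor to give the inverse. Checking these are mutually inverse is a formal consequence of the two descriptions.

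The main obstacle, and the step deserving the most care, is the compatibility of everything with base change: one must verify that $m$-regularity is preserved under arbitrary base change $T' \to T$ (so that "fiberwise $m$-regular" is a well-behaved condition), that $\pi_{T*}(\CF(m))$ is locally free and its formation commutes with base change, and that the counit map is a surjection on the nose (not merely fiberwise) --- all of which rest on cohomology and base change together with the vanishing $\HH^{>0}(X_t, \CF_t(m)) = 0$ guaranteed by $m$-regularity. A secondary technical point is checking that $U_m^P$ really is an \emph{open} subscheme of the Quot scheme: flatness and the Hilbert polynomial are already built into $\Quot_X(\OO_X(-m)^{\oplus N}, P)$, "the quotient sheaf is $m$-regular" is open by \Cref{prop:m-reg-is-open} pulled back along the universal quotient, and "$\HH^0(\theta(m))$ is an isomorphism" is open because, on the $m$-regular locus, $\HH^0(\CF(m))$ has constant rank $N$ equal to $\mathrm{rk}\,\OO_X(-m)^{\oplus N}$ restricted to global sections, so an iso fiberwise is an iso in a neighbourhood. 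Once these base-change statements are in place, the identification of functors is essentially bookkeeping.
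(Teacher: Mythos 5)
Your treatment of the quotient presentation is essentially the paper's own argument: identify $U_m^P$ with the frame bundle of $\pr_{T*}\cF(m)$ via cohomology and base change, use global generation of $m$-regular sheaves to get surjectivity of the counit (fiberwise, hence on the nose by Nakayama), let $\GL_{P(m)}$ act by change of basis, and match $T$-points in both directions. The paper phrases the inverse direction via the $\GL_{P(m)}$-torsor $\Isom(\OO_T^{P(m)},\pr_{T*}\cF(m))\to T$ rather than local trivialisations, but this is the same construction, and your attention to base-change compatibility is exactly where the real content lies.

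There is, however, one genuinely false claim in your exhaustion step: you assert a \emph{uniform} bound $m_0=m_0(P)$ such that every coherent sheaf with Hilbert polynomial $P$ is $m_0$-regular, attributing this to ``boundedness of the family of such sheaves.'' The family of all coherent sheaves with a fixed Hilbert polynomial is \emph{not} bounded in general (boundedness requires extra hypotheses such as semistability). For instance on $\BP^1$ the sheaves $\OO(-k)\oplus\OO(k)$ all have Hilbert polynomial $2z+2$ but are $m$-regular only for $m\ge k$, so no uniform $m_0$ exists. Had your claim been true, the chain would stabilise and $\CCoh^P(X)$ would equal the single quasi-compact quotient stack $[U_{m_0}^P/\GL_{P(m_0)}]$, which is false and would render the exhaustion statement pointless. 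Fortunately the error is inessential: the exhaustion $\CCoh^P(X)=\bigcup_m\CCoh^P_m(X)$ only needs that each individual sheaf is $m$-regular for \emph{some} $m$ depending on the sheaf, which is exactly what \Cref{reg-lemma} provides and what the paper uses. Replace the uniform bound by this pointwise statement and the rest of your argument goes through.
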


In \Cref{sec:0-dim-sheaves} we define, for a quasiprojective scheme $X$, the stack $\CCoh^n(X)$ of $0$-dimensional coherent sheaves of length $n$. We realise it as a natural open substack of $\CCoh(X)$. 
Then \Cref{mainthm1} directly yields the following global quotient description of $\CCoh^n(X)$. 

\begin{thm}[\Cref{prop:Coh^n-global-quotient}]\label{mainthm2}
Let $X$ be a projective variety, $n\geq 0$ an integer. There is an open $\GL_{n}$-invariant subscheme $A_{X,n}\subset \Quot_X(\OO_X^{\oplus n},n)$ along with an isomorphism of stacks
\[
\begin{tikzcd}
    {[}A_{X,n}/\GL_{n}{]} \arrow{r}{\sim} & \CCoh^n(X).
\end{tikzcd}
\]
In particular, $\CCoh^n(X)$ is algebraic, with $A_{X,n}$ serving as a smooth atlas.
\end{thm}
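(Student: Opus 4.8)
The plan is to obtain \Cref{mainthm2} as the special case of \Cref{mainthm1} for the constant Hilbert polynomial $P(z) = n$, the extra input being that $0$-dimensional sheaves are automatically Castelnuovo--Mumford $0$-regular. First I would match up the two descriptions of the stack. On the one hand, $\CCoh^n(X)$ is defined in \Cref{sec:0-dim-sheaves} as the open substack of $\CCoh(X)$ of sheaves with $0$-dimensional support and length $n$; on the other hand, \ref{substack-3} provides the open-and-closed substack $\CCoh^P(X) \subset \CCoh(X)$ attached to the constant polynomial $P \equiv n$. These coincide: a coherent sheaf on $X$ has $0$-dimensional support exactly when its Hilbert polynomial has degree $0$, and for such an $\CF$ one has $\chi(\CF(k)) = \chi(\CF) = \length(\CF)$ for all $k$, since twisting a $0$-dimensional sheaf by a line bundle does not change its Euler characteristic; so ``$0$-dimensional of length $n$'' and ``Hilbert polynomial $\equiv n$'' cut out the same locus in flat families. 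Moreover every $0$-dimensional sheaf $\CF$ is $m$-regular for all $m$, because $\HH^i(X,\CF(k)) = 0$ for all $i > 0$ and all $k$ (each twist $\CF(k)$ is again $0$-dimensional). In particular $\CCoh^P(X) \subseteq \CCoh_0(X)$, and therefore
\[
\CCoh^n(X) \;=\; \CCoh^P(X) \;=\; \CCoh^P(X) \cap \CCoh_0(X) \;=\; \CCoh^P_0(X).
\]

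I would then invoke \Cref{mainthm1} with this $P$ and the single value $m = 0$: it gives an isomorphism $\CCoh^P_0(X) \cong [U_0^P/\GL_{P(0)}]$, where $U_0^P$ is the open subscheme of $\Quot_X(\OO_X(-0)^{\oplus P(0)},P)$ parametrising surjections $\theta \colon \OO_X^{\oplus P(0)} \onto \CF$ with $\CF$ being $0$-regular --- which is vacuous, as just noted --- and $\HH^0(X,\theta)$ an isomorphism. Since $P(0) = n$ and $\OO_X(-0) = \OO_X$, the ambient Quot scheme is $\Quot_X(\OO_X^{\oplus n},n)$ and $\GL_{P(0)} = \GL_n$. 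Setting $A_{X,n} \defeq U_0^P$, which is an open $\GL_n$-invariant subscheme of $\Quot_X(\OO_X^{\oplus n},n)$ --- it is precisely the locus over which the quotient stack of \Cref{mainthm1} is built --- we get the asserted isomorphism $[A_{X,n}/\GL_n] \cong \CCoh^n(X)$. Algebraicity is then automatic: the canonical projection $A_{X,n} \to [A_{X,n}/\GL_n] \cong \CCoh^n(X)$ is a $\GL_n$-torsor, hence a smooth surjective morphism from a scheme, which exhibits $\CCoh^n(X)$ as an algebraic stack and $A_{X,n}$ as a smooth atlas.

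Since all the substance is already in \Cref{mainthm1}, the only genuine point to check here is the identification $\CCoh^n(X) = \CCoh^P_0(X)$ at the level of \emph{stacks}, not just of $\bfk$-points: this amounts to propagating the two cohomological facts above (constancy of $\chi$ under twisting, and vanishing of higher cohomology for $0$-dimensional sheaves) through the flatness and semicontinuity arguments that justify \ref{substack-2} and \ref{substack-3}. I would also record the degenerate case $n = 0$ separately, where $P \equiv 0$, the Quot scheme $\Quot_X(\OO_X^{\oplus 0},0)$ and the group $\GL_0$ are trivial, and both sides collapse to $\Spec\bfk$, consistently with $\CCoh^0(X)$ parametrising only the zero sheaf.
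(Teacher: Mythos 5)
Your proposal is correct and follows essentially the same route as the paper: the paper's proof of \Cref{prop:Coh^n-global-quotient} also reduces to \Cref{thm:global-quot-1} with $P\equiv n$, invoking \Cref{rmk:m-reg-0dim} (every $0$-dimensional sheaf is $m$-regular for all $m$), and its $A_{X,n}$ --- the locus where $\OO_{\mathrm Q_n}^{\oplus n}\to\pi_\ast\cF$ is an isomorphism --- is exactly your $U_0^P$. The only cosmetic difference is that the paper defines $A_{X,n}$ directly via the universal quotient rather than as a specialisation of $U_m^P$, but the two descriptions coincide.
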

In \Cref{sec:Coh(A^d)} we focus on the case $X=\BA^d$, and we establish an isomorphism of stacks (cf.~\Cref{thm:affine-coh^n})
\begin{equation}\label{eqn:iso-A^d}
\begin{tikzcd}
\CCoh^n(\Ab^d) \arrow{r}{\sim} & {[}C(n,d)/\GL_{n}{]},
\end{tikzcd}
\end{equation}
where $C(n,d)\subset \BA^{dn^2}$ is the \emph{commuting variety}, i.e.~the scheme parametrising $d$-tuples of pairwise commuting $n\times n$ matrices.

In \Cref{sec:naive-coh-to-chow} we construct the `naive Coh-to-Chow morphism' $\CCoh^n(X)_{\red} \to \Sym^n(X)$ to the symmetric product $\Sym^n(X) = X^n / \FS_n$. It sends a sheaf $[\CF] \in\CCoh^n(X)$ to the associated $0$-cycle
\[
\cycle(\CF) = \sum_{x \in X}\length_{\OO_{X,x}}\CF_x\cdot x \in \Sym^n(X).
\]
We refer to Rydh \cite[IV, Prop.~7.8]{Rydh1} for the construction of the `actual' Coh-to-Chow morphism 
\begin{equation}\label{eqn:support-map}
\begin{tikzcd}
\supp_X^n\colon \CCoh^n(X) \arrow{r} & \Sym^n(X).
\end{tikzcd}    
\end{equation}

Note that, for any open subscheme $U \subset X$, the stack $\CCoh^n(U)$ agrees with the open substack $\CCoh^n(X)_U = \CCoh^n(X) \times_{\Sym^n(X)}\Sym^n(U)$ introduced in \ref{substack-1} above.

In \Cref{sec:etale-behaviour} we study the functorial behaviour of $\CCoh^n(-)$ along \'etale morphisms. A key technical lemma we establish (cf.~\Cref{lemma:etale1}) says that if $f\colon Y \to X$ is an \'etale morphism of varieties, the open locus $V_{\inj} \subset \CCoh^n(Y)$ of sheaves $\CE$ such that $f$ is injective on the set-theoretic support of $\CE$ admits an \emph{\'etale} morphism $f_\ast \colon V_{\inj} \to \CCoh^n(X)$, given on points by the pushforward. This is the key to a series of useful applications, some of which will be treated in our companion paper \cite{Coh(X)-motivic}. One application we discuss here is the following: if $X$ is smooth of dimension $d$, then $\CCoh^n(X)$ is smooth if and only if $\CCoh^n(\BA^d)$ is smooth. This, in turn, combined with the isomorphism \eqref{eqn:iso-A^d}, immediately yields the following characterisation of smoothness for $\CCoh^n(X)$.

\begin{thm}[\Cref{thm:smoothness-of-Coh}]
Let $X$ be a smooth $d$-dimensional variety. Then $\CCoh^n(X)$ is smooth if and only if either $d=1$ or $n=1$.
\end{thm}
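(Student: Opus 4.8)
The plan is to reduce smoothness of $\CCoh^n(X)$ to smoothness of the commuting variety $C(n,d)$, and then settle the latter by an elementary tangent space computation at the origin. First I would invoke the reduction to affine space from \Cref{sec:etale-behaviour}: since $X$ is smooth of dimension $d$, the stack $\CCoh^n(X)$ is smooth if and only if $\CCoh^n(\BA^d)$ is. Then, using the isomorphism \eqref{eqn:iso-A^d}, namely $\CCoh^n(\BA^d)\cong [C(n,d)/\GL_n]$, together with the fact that the atlas $C(n,d)\to [C(n,d)/\GL_n]$ is a $\GL_n$-torsor (in particular a smooth surjection) and that smoothness of a morphism over $\bfk$ can be tested smooth-locally on the source, I would conclude that $[C(n,d)/\GL_n]$ is smooth if and only if $C(n,d)$ is smooth as a $\bfk$-scheme. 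Hence it suffices to prove: $C(n,d)$ is smooth if and only if $d=1$ or $n=1$.

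For the ``if'' direction: if $d=1$ then $C(n,1)=\Mat_n\cong\BA^{n^2}$ (a single matrix commutes with itself), which is smooth; if $n=1$ then $C(1,d)\cong\BA^d$ ($1\times1$ matrices always commute), again smooth — alternatively, for $n=1$ one sees directly that $\CCoh^1(X)\cong X\times B\BG_m$, smooth since $X$ is. For the converse I would assume $n\ge2$ and $d\ge2$ and show that $C(n,d)$ is singular at the origin $0=(0,\dots,0)$. Indeed, $C(n,d)$ sits inside $\BA^{dn^2}=\Mat_n^{\oplus d}$ as the common zero locus of the entries of the commutators $[A_i,A_j]$ for $i<j$, all homogeneous of degree $2$; so its defining ideal — and even its radical, since the linear span of $C(n,d)$ is the whole of $\Mat_n^{\oplus d}$ — lies in $\mathfrak m_0^2$, where $\mathfrak m_0$ is the maximal ideal at $0$, and therefore $\dim_\bfk T_0C(n,d)=dn^2$. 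On the other hand $C(n,d)$ is a \emph{proper} closed subscheme of the irreducible scheme $\Mat_n^{\oplus d}$ (for instance $(E_{11},E_{12},0,\dots,0)\notin C(n,d)$, as $[E_{11},E_{12}]=E_{12}\ne0$), so every irreducible component of $C(n,d)$ has dimension $\le dn^2-1$, whence $\dim_0C(n,d)\le dn^2-1<dn^2=\dim_\bfk T_0C(n,d)$. This proves non-regularity, hence non-smoothness, of $C(n,d)$; consequently neither $\CCoh^n(\BA^d)$ nor $\CCoh^n(X)$ is smooth.

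I expect no serious obstacle: the content is concentrated in the last paragraph, and even there the key realisation is that one need not know anything delicate about the global geometry of the commuting variety (e.g.\ the classical irreducibility results when $d=2$) — the two cheap facts ``the defining equations are homogeneous of degree $2$'' and ``$C(n,d)\subsetneq\Mat_n^{\oplus d}$'' already force the origin to be singular. The only points requiring a little care are that $C(n,d)$ must be given the scheme structure coming from the presentation \eqref{eqn:iso-A^d}, and that the tangent space argument is insensitive to whether that structure happens to be reduced (this is why I noted that the radical of the commutator ideal is still contained in $\mathfrak m_0^2$).
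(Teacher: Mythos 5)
Your proof follows the paper's route exactly: reduce to $\CCoh^n(\BA^d)$ via the \'etale-local structure, pass to the atlas $C(n,d)$ of the quotient presentation $[C(n,d)/\GL_n]$, and observe that $C(n,d)$ is singular whenever $n,d\ge 2$. The paper merely asserts this last singularity, whereas your tangent-space computation at the origin (the quadratic commutator equations force $\dim_\bfk T_0C(n,d)=dn^2$, while the strict inclusion $C(n,d)\subsetneq \Mat_n^{\oplus d}$ bounds every component's dimension by $dn^2-1$) is a correct, self-contained justification of it, including the point that non-reducedness of the commutator ideal is irrelevant.
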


In \Cref{sec:stratification} we revisit the classical stratification of $\CCoh^n(X)$ in terms of partitions $\alpha\vdash n$, for any smooth variety $X$. This stratification is induced by the one on $\Sym^n(X)$ via pullback along the Coh-to-Chow morphism \eqref{eqn:support-map}. Note that the naive version $\CCoh^n(X)_{\red} \to \Sym^n(X)$ of the support map \eqref{eqn:support-map} is enough to study this stratification, at least from a motivic perspective. In \Cref{prop:local-triviality} we establish the \'etale-local triviality of the `$\alpha$-component' of the support map 
\begin{equation}\label{alpha-component}
\begin{tikzcd}
\CCoh^n_\alpha(X) \arrow{r} & \Sym^n_\alpha(X).    
\end{tikzcd}
\end{equation}
The fibre is naturally identified with the product
\[
\prod_i \CCoh^i(\BA^d)_0^{\alpha_i}
\]
of \emph{punctual stacks} (parametrising sheaves entirely supported on the origin $0 \in \BA^d$), each of which is a global quotient stack (cf.~\Cref{sec:coh-to-chow-affine-and-punctual}) via
\[
\CCoh^n(\BA^d)_0 \cong [C(n,d)^{\nilp}/\GL_{n}].
\]
We shall see in our companion paper \cite[App.~A]{Coh(X)-motivic} an example showing that the \'etale locally trivial fibration \eqref{alpha-component} is \emph{not}, in general, \emph{Zariski} locally trivial. 

\subsection*{Acknowledgements} The authors are partially supported by the PRIN \emph{Geometry of algebraic structures: moduli, invariants, deformations} 2022BTA242. The first author is a member of GNSAGA of INDAM. 

This paper has been inspired by a series of lectures held by the first author at the BIRS-CMO Workshop held at Oaxaca, Mexico in 2022. She would like to thank the organisers for inviting her, and the participants for their lively interest.

Finally, we wish to thank an anonymous referee for suggesting several improvements on the first version of this paper.

\section{Conventions and background material}

\subsection{Conventions}
All schemes in this paper will be of finite type over a fixed algebraically closed field $\bfk$, and all morphisms $\bfk$-morphisms. We simply write `$X \times T$' for fibre products over $\Spec \bfk$. By point of a scheme we always mean $\bfk$-valued (i.e., closed) point; such points are dense in $X$. All group schemes, such as $\BG_m$ and more generally $\GL_n$, are defined over $\bfk$, although not explicitly displayed in the notation. 

Given a sheaf $\cF \in \Coh(X\times T)$ and a point $t \in T$, we denote by $\cF_t$ the coherent sheaf $\cF|_{X\times \set{t}}$ and we implicitly identify $X\times \set{t}$ with $X$ via the projection isomorphism, so that $\cF_t \in \Coh(X)$. 

Given a morphism of schemes $Y \to T$, and a coherent sheaf $\cF$ on $Y$, we say that $\cF$ is \emph{flat over} $T$, or $T$-\emph{flat}, if for any point $y \in Y$, with image $t \in T$, the stalk $\cF_y$ is flat as a module over $\OO_{T,t}$, via the canonical ring homomorphism $\OO_{T,t}\to \OO_{Y,y}$.

A morphism of schemes $f\colon Y \to T$ is \emph{quasiprojective} (resp.~projective) if there is an $f$-ample line bundle $\OO_Y(1)$ on $Y$ \cite[\href{https://stacks.math.columbia.edu/tag/01VV}{Tag 01VV}]{stacks-project} and \emph{projective} if it is quasiprojective and proper; equivalently, $f$ is projective if it can be obtained as $\Proj_{\OO_T}(\CA)$ as in \cite[II.7]{Hartshorne_AG}. A \emph{variety} will be an integral scheme quasiprojective over $\bfk$. For a scheme $Y$, we denote by $\Coh(Y)$, resp.~$\QCoh(Y)$, the abelian category of coherent, resp.~quasicoherent sheaves on $Y$. 

The \emph{Hilbert polynomial} $P_\CF$ of a coherent sheaf $\CF$ on a projective scheme $(X,\OO_X(1))$ is defined by $P_\CF(k)=\chi(\CF(k))$, where $\CF(k) = \CF \otimes_{\OO_X}\OO_X(1)^{\otimes k}$.

\subsection{Support of a coherent sheaf}
For any topological space $X$ and any sheaf $\CF$ of abelian groups on $X$, the {\em support} of $\CF$ is the closure in $X$ of the locus of points $p$ such that $\CF_p\neq 0$.

Note that in general the locus of points $p$ such that $\CF_p\neq 0$ is not closed, not even when $X$ is a scheme and $\CF$ is a quasicoherent sheaf. For instance, choose $\bfk=\mathbb{C}$, $Z\subset \BP^1$ an infinite, countable subset and let $\CF\in \QCoh(\BP^1)$ be the direct sum over $p\in Z$ of the skyscraper sheaves $\OO_p$.

However, if $\CF\in \Coh(X)$ is coherent and $\CF_p=0$ for some $p\in X$, then by Nakayama's lemma there is an open neighbourhood $U$ of $p$ in $X$ such that $\CF_q=0$ for every $q\in U$, so the set $\set{p\in X\,|\, \CF_p\neq 0}$ is closed in $X$. We call it the {\em set-theoretic support} of $\CF$ and we denote it by $\Supp_{\mathrm{set}}(\CF)$.

\begin{notation}
Let $X$ be a scheme and $\CF\in \Coh(X)$. We denote by $\Supp(\CF)$ the \emph{scheme-theoretic support} of $\CF$, namely the closed subscheme $\Supp(\CF) \into X$ whose ideal sheaf $\curly I\subset \OO_X$ is defined, on each open subset $U\subset X$, by 
    \[
    \curly I(U)\defeq\Set{f\in \OO_X(U)\,|\,f\cdot (-)\colon \CF|_U \to \CF|_U\mbox{ is identically 0}}.
    \]
The integer $\dim \Supp(\CF)$ is called the \emph{dimension} of $\CF$, and will be denoted $\dim \CF$. 
\end{notation}

\begin{exercise}
Let $X$ be a scheme, $\CF \in \Coh(X)$. Show that the underlying set of $\Supp(\CF)$ agrees with $\Supp_{\mathrm{set}}(\CF)$, so that $\Supp_{\mathrm{set}}(\CF)$ is closed in $X$. Show that if $f\colon X\to Y$ is any morphism of schemes and $\CF\in \Coh(Y)$, then $\Supp(f^*\CF)=f^{-1}\Supp(\CF)$ (if in need of a hint, you can consult \cite[\href{https://stacks.math.columbia.edu/tag/056H}{Tag 056H}]{stacks-project}). 
If $f$ is proper and $\CG\in \Coh(X)$, then $\Supp(f_*\CG)\subset f(\Supp \CG)$. Give an example to show that this inclusion is not, in general, an equality (hint: think of $X=\BP^1$ and $Y=\Spec \bfk$; can you find $\curly L\in \Pic(X)$ such that $f_*\curly L=0$?).
\end{exercise}

\begin{remark}
By definition, one can characterise $\Supp(\CF)$ as the smallest closed subscheme $i\colon Z\into X$ such that the natural homomorphism $\CF\to i_{*}(\CF|_Z) = i_\ast i^\ast \CF$ is an isomorphism.
\end{remark}

\subsection{Quot schemes}\label{sec:quot}
Let $(X,\OO_X(1))$ be a polarised projective scheme, $\CF \in \Coh(X)$ a coherent sheaf, $P \in \BQ[z]$ a polynomial (possibly constant). Grothendieck's Quot scheme 
\[
\Quot_X(\CF,P)
\]
is the (projective) scheme representing the functor sending a scheme $T$ to the set of isomorphism classes $[\CF_T \onto \cE]$ of quotients $\CF_T \onto \cE$ in $\Coh(X \times T)$, where $\cE$ is $T$-flat and satisfies $P_{\cE_t}=P$ for every $t \in T$. Here $\CF_T$ is the pullback of $\CF$ along $\pr_X\colon X \times T \to X$ and $P_{\cE_t}$ denotes the Hilbert polynomial of $\cE_{t}$ (with respect to $\OO_X(1)$). Finally, two quotients are isomorphic whenever they share the same kernel. We refer the reader to \cite{Grothendieck_Quot,fga_explained} for foundational material (such as existence and projectivity) regarding Quot schemes.

\section{The stack of coherent sheaves and some open substacks}\label{sec:stack-Coh(X)}

In this section we introduce the stack $\CCoh(X)$ of coherent sheaves on a scheme $X$, and some of its most natural open substacks. We refer the reader to \cite{MR2223406} for a thorough introduction to stacks in full generality, and to \cite{MR1905329} for a shorter survey. A detailed account on the stack of coherent sheaves, introduced and studied under minimal assumptions, can be found in the Stacks Project \cite[\href{https://stacks.math.columbia.edu/tag/08KA}{Tag 08KA}]{stacks-project}. See also \Cref{rmk:generalisations}.

\subsection{Families of coherent sheaves with proper support}\label{sec:families}
We start by defining \emph{families of coherent sheaves}, which will be the `points' of the stack $\CCoh(X)$ of coherent sheaves on a scheme $X$.

\begin{definition}\label{def:family-sheaves}
A \emph{family of coherent sheaves} on a scheme $X$, parametrised by a scheme $T$, is a sheaf $\curly F \in \Coh(X\times T)$, flat over $T$, such that $\Supp(\curly F)\to T$ is a proper morphism. An {\em isomorphism} of families  is an isomorphism $\alpha\colon \curly F\simto\curly G$ in $\Coh(X\times T)$. 
\end{definition}

\begin{remark}
 If $X$ is proper, in particular if $X$ is projective, then $\Supp(\cF)\to T$ is automatically a proper morphism. If $T=\Spec \bfk$ and $\dim \cF=0$, then $\Supp(\cF)$ is proper. Note that $\Supp(\cF) \to T$ need not be flat, even though $\cF$ is.
\end{remark}

\begin{exercise}
    Show that if $X$ is not proper, you can have flat families of 0-dimensional sheaves whose support is not proper. Hint: let $X=\mathbb A^1$, $T=\mathbb P^1$, and let $\cF=j_\ast \OO_\Delta$, the pushforward of the structure sheaf of the diagonal along $j\colon \BA^1 \times \BA^1 \into \BA^1\times\BP^1$.
\end{exercise}

We have the following useful criterion for flatness along a projective family (see also \Cref{prop:hilb-poly-constant}).

\begin{prop}\label{flatness-criteria}
Let $f\colon Y \to T$ be a projective morphism of schemes, with $T$ noetherian. Let $\CL$ be an $f$-ample line bundle on $Y$. Fix $\cF \in \Coh(Y)$. Consider the following conditions:
\begin{itemize}
    \item [\mylabel{flat-1}{(1)}] $\cF$ is $T$-flat.
    \item [\mylabel{flat-2}{(2)}] $f_{\ast} (\cF \otimes_{\OO_Y} \CL^{\otimes m})$ is locally free of finite rank for $m \gg 0$.
    \item [\mylabel{flat-3}{(3)}] The Hilbert polynomial
    \[
k\mapsto \chi(\cF_t \otimes_{\OO_{Y_t}}\CL_t^{\otimes k})
    \]
    is locally constant as a function on $T$.
\end{itemize}
Then {\normalfont{\ref{flat-1}}} is equivalent to {\normalfont{\ref{flat-2}}} and implies {\normalfont{\ref{flat-3}}}. All three conditions are equivalent is $T$ is reduced.
\end{prop}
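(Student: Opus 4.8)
The plan is to proceed through the implications $\ref{flat-1}\Leftrightarrow\ref{flat-2}$, then $\ref{flat-1}\Rightarrow\ref{flat-3}$, and finally $\ref{flat-3}\Rightarrow\ref{flat-1}$ under the extra hypothesis that $T$ is reduced. The first two implications are the standard cohomology-and-base-change package, and I would invoke it rather than reprove it; the last implication is where the reducedness is essential and is the genuine content of the statement.

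First I would reduce to $T$ affine and noetherian, say $T = \Spec A$, which is harmless since flatness, local freeness, and local constancy are all local on the base. Then, using that $f$ is projective with $f$-ample $\CL$, I would appeal to the fundamental theorem on cohomology and base change (in the form of \cite[III.12]{Hartshorne_AG} or \cite[\href{https://stacks.math.columbia.edu/tag/0BXD}{Tag 0BXD}]{stacks-project}): replacing $\CL$ by a suitable power, we may assume the higher direct images $R^if_\ast(\cF\otimes\CL^{\otimes m})$ vanish for all $i>0$ and all $m\ge 0$, and that formation of $f_\ast(\cF\otimes\CL^{\otimes m})$ commutes with base change. Under this setup, $\cF$ being $T$-flat is equivalent to $f_\ast(\cF\otimes\CL^{\otimes m})$ being a flat, hence (being finitely generated over noetherian $A$) locally free $A$-module for every $m\gg 0$; this gives $\ref{flat-1}\Leftrightarrow\ref{flat-2}$. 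For $\ref{flat-1}\Rightarrow\ref{flat-3}$: when $\ref{flat-2}$ holds, the base-change isomorphism identifies the fibre of $f_\ast(\cF\otimes\CL^{\otimes m})$ at $t\in T$ with $\HH^0(Y_t,\cF_t\otimes\CL_t^{\otimes m})$, and since all higher cohomology vanishes after twisting, $\chi(\cF_t\otimes\CL_t^{\otimes m}) = \dim_{\kappa(t)}\HH^0(Y_t,\cF_t\otimes\CL_t^{\otimes m}) = \rk_t f_\ast(\cF\otimes\CL^{\otimes m})$, which is locally constant on $T$ since the sheaf is locally free. As the Hilbert polynomial $P_{\cF_t}$ is determined by its values at $m\gg 0$ (it is a numerical polynomial, pinned down by finitely many values), local constancy of $P_{\cF_t}$ follows.

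For the last implication, assume $T$ is reduced and $\ref{flat-3}$ holds; I want $\ref{flat-1}$, equivalently $\ref{flat-2}$. For each fixed $m\gg 0$ in the range where higher cohomology vanishes, the coherent sheaf $\CG_m := f_\ast(\cF\otimes\CL^{\otimes m})$ on the reduced noetherian scheme $T$ has the property that $t\mapsto \dim_{\kappa(t)}(\CG_m\otimes\kappa(t))$ is locally constant: indeed by Grauert-type semicontinuity combined with base change, this fibre dimension equals $\dim_{\kappa(t)}\HH^0(Y_t,\cF_t\otimes\CL_t^{\otimes m}) = \chi(\cF_t\otimes\CL_t^{\otimes m})$ (higher cohomology still vanishes on fibres for $m\gg 0$), which is locally constant by $\ref{flat-3}$. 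Now the elementary lemma \cite[Ex.~II.5.8]{Hartshorne_AG} says that on a reduced noetherian scheme, a coherent sheaf with locally constant fibre dimension is locally free; hence $\CG_m$ is locally free of finite rank for all $m\gg 0$, which is exactly $\ref{flat-2}$, and we conclude $\cF$ is $T$-flat.

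**Main obstacle.** The crux is the reducedness step: over a non-reduced base, $\ref{flat-3}\not\Rightarrow\ref{flat-1}$ — the twisted pushforwards $\CG_m$ can have constant fibre dimension without being locally free (the stalk can fail to be free precisely because of nilpotents, e.g. $k[\varepsilon]/(\varepsilon^2)$-modules like $k$ with $\varepsilon$ acting as $0$). So I need to be careful to use reducedness exactly once, at the point where I pass from "constant fibre dimension of $\CG_m$" to "$\CG_m$ locally free." The only subtlety to watch is making sure that a single cofinal range of $m$ works simultaneously for the base-change vanishing, the fibrewise vanishing, and the determination of the Hilbert polynomial — but since $\cF$ is a fixed coherent sheaf on the projective $Y/T$ with $T$ noetherian, uniform bounds for $m\gg 0$ exist by the usual Serre vanishing arguments applied to $Y$ and to the finitely many fibres needed (or, more robustly, by the relative version of Serre vanishing), so this is routine.
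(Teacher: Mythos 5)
Your proposal is correct and follows the same route as the paper: cohomology and base change together with Serre vanishing for the equivalence \ref{flat-1}$\Leftrightarrow$\ref{flat-2} and the implication \ref{flat-1}$\Rightarrow$\ref{flat-3}, and the criterion that a coherent sheaf on a reduced noetherian scheme with locally constant fibre dimension is locally free for \ref{flat-3}$\Rightarrow$\ref{flat-1}. The paper merely cites these facts, whereas you spell out where reducedness is used and why a uniform range of $m$ exists; both points are handled correctly.
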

\begin{proof} The first statement is a consequence of the theorem of cohomology and base change combined with Serre vanishing. The second statement is \cite[III, Thm.~9.9]{Hartshorne_AG}, or equivalently the fact that a coherent sheaf on a reduced scheme is locally free if and only if the fibre dimension is locally constant.
\end{proof}

\subsection{\texorpdfstring{The stack $\CCoh(X)$}{}}\label{sec:coh(X)}
We are ready to define the stack $\CCoh(X)$. We start by fixing some notation. Let $X$ and $T$ be schemes. We denote by 
\[
\CCoh(X)(T)
\]
the category whose objects are families of coherent sheaves on $X$ parametrised by $T$ according to \Cref{def:family-sheaves} (note that this includes properness of the support), and whose morphisms are isomorphisms of families (again as in \Cref{def:family-sheaves}). It is a (not full) subcategory of $\Coh(X\times T)$. By definition $\CCoh(X)(T)$ is a groupoid, namely a category in which all morphisms are isomorphisms.

\begin{remark} Let $X,T,S$ be schemes, and $f\colon S\to T$ a morphism. We denote by $f_X$ the induced morphism $(\id_X,f):X\times S\to X\times T$. If $\cF\in \CCoh(X)(T)$, then the pullback $f_X^*\cF\in \Coh(X\times S)$ is in $\CCoh(X)(S)$, because flatness is invariant under base change and so is properness. Also note that we implicitly use that the pullback of a coherent sheaf along a morphism of locally noetherian schemes is still coherent.
\end{remark}

\begin{definition}
Let $X$ be a scheme. The prestack defined by the association
\[
\begin{tikzcd}
    T \arrow[mapsto]{r} & \CCoh(X)(T)
\end{tikzcd}
\]
will be denoted $\CCoh(X)$ throughout. 
\end{definition}

Grothendieck \cite[Sec.~B]{Grothendieck_fpqc} proved that quasicoherent sheaves (and their morphisms) satisfy \emph{fpqc descent}. See also \cite[Thm.~4.23]{MR2223406} for another reference. This implies that $\CCoh(X)$ is in fact a \emph{stack}. 

We encourage the reader unfamiliar with stacks to skip the definition on first reading, and use instead the following consequences: for any schemes $X$ and $T$, we can identify $\CCoh(X)(T)$ as the category of morphism from $T$ to $\CCoh(X)$. A family $\cF$ is a morphism $T\to \CCoh(X)$, and an isomorphism of families is a $2$-arrow, or homotopy, between the two. If $f\colon S\to T$ is a morphism of schemes and $\cF\in \CCoh(X)(T)$, we view $f_X^\ast\cF$ as the composite with $f$ of the morphism $T\to \CCoh(X)$ defined by $\cF$.

We choose to work with the categories $\CCoh(X)(T)$ and not with isomorphism classes of families because, to do geometry, it is important to be able to define things locally and then glue them; it is well known that to glue sheaves defined on an open cover, it is not enough to require that they are isomorphic on pairwise intersections, we must choose isomorphisms and require a cocycle condition. Formalising this requirement leads directly to the definition of stack.

\begin{remark}\label{rmk:generalisations}
We need to mention that \emph{vast} generalisations of the definition (and properties) of $\CCoh(X)$ are available in the literature. For instance, in \cite[Thm.~2.1.1]{zbMATH05081797} the input is an algebraic stack $\CX \to B$ locally of finite presentation over an excellent algebraic space $B$, and the upshot is the existence and algebraicity of the stack of coherent sheaves with proper support on the fibres of $\CX \to B$. In \cite[Thm.~B]{zbMATH06477021}, the authors remove the hypothesis of being locally of finite presentation for the initial morphism: they prove that if $\CX \to \CS$ is a morphism of algebraic stacks with finite diagonal $\CX \to \CX \times_\CS\CX$, then the $\CS$-stack $\CCoh(\CX/\CS)$ is algebraic and has affine diagonal $\CCoh(\CX/\CS) \to \CCoh(\CX/\CS) \times_\CS \CCoh(\CX/\CS)$.
\end{remark}

\subsection{\texorpdfstring{Some open substacks of $\CCoh(X)$}{}}

Let $X$ be a scheme. In this subsection we define some natural open substacks of the stack $\CCoh(X)$. As for schemes, an open substack in a stack is determined by its points, that is, the isomorphism classes of morphisms $\Spec \bfk \to \CCoh(X)$. In other words, it can be specified by selecting a subcategory $\cA$ of $\Coh(X)$ closed under isomorphism.

The condition for such a subcategory $\CA \subset \Coh(X)$ to define an open substack is the following: for every scheme $T$, and every $\cF\in \CCoh(X)(T)$, the set $\set{t\in T\,|\, \cF_t\in \cA}$ is open in $T$.

This makes sense if we think of $\cF$ as a morphism $T\to \CCoh(X)$, since for any scheme $V$, a subset $A\subset V$ is open if and only if, for every morphism of schemes $f\colon T\to V$, the subset $f^{-1}(A)$ is open in $T$.

\subsubsection{Support in an open subset}
The first open substack of $\CCoh(X)$ we want to define is the stack $\CCoh(X)_U$ parametrising sheaves $\CF \in \CCoh(X)(\bfk)$ whose support lies in a fixed open subset $U\subset X$.

\begin{lemma}\label{lemma:supp-U}
Let $X$ be a scheme and $U\subset X$ an open subscheme. Let $\cA_U\subset \Coh(X)$ be the subcategory of sheaves $\CF$ such that $\Supp(\CF)\subset U$. Then $\cA_U$ defines an open substack $\CCoh(X)_U \subset \CCoh(X)$.
\end{lemma}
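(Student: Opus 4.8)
The plan is to verify the openness criterion stated just above the lemma: given any scheme $T$ and any family $\cF \in \CCoh(X)(T)$, we must show that the locus $\set{t \in T \,|\, \Supp(\cF_t) \subset U}$ is open in $T$. Equivalently, writing $Z = \Supp(\cF) \subset X \times T$ for the scheme-theoretic support, which by \Cref{def:family-sheaves} is proper over $T$, we want the set of $t$ for which the fibre $Z_t$ lies in $U \times \set{t}$ to be open.

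First I would reduce the problem to a statement about the proper morphism $\pi\colon Z \to T$. The key point is the compatibility of support with base change: by the exercise on supports, $\Supp(\cF_t) = \Supp(\cF|_{X \times \set t})$ is the fibre $Z_t$ of $Z \to T$ over $t$ (here one uses that $\cF$ is $T$-flat, or simply that taking scheme-theoretic support commutes with the flat base change $X \times \set t \hookrightarrow X \times T$; for the set-theoretic statement, which is all we need, no flatness is required). So the condition ``$\Supp(\cF_t) \subset U$'' is exactly the condition that the set-theoretic fibre $\pi^{-1}(t)$ is contained in the open set $(U \times T) \cap Z$ of $Z$.

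The main step, and the only substantive one, is then the following general fact: if $\pi\colon Z \to T$ is a proper (or merely closed) morphism and $W \subset Z$ is open, then $\set{t \in T \,|\, \pi^{-1}(t) \subset W}$ is open in $T$. This is standard: its complement is $\pi(Z \setminus W)$, which is closed because $Z \setminus W$ is closed in $Z$ and $\pi$ is a closed map (properness being preserved under base change, or one can cite \cite[\href{https://stacks.math.columbia.edu/tag/01W0}{Tag 01W0}]{stacks-project}). Applying this with $Z = \Supp(\cF)$, which is proper over $T$ by hypothesis, and $W = (U \times T) \cap \Supp(\cF)$, which is open in $\Supp(\cF)$ since $U \times T$ is open in $X \times T$, gives precisely the openness of our locus.

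I don't anticipate a genuine obstacle here; the proof is essentially bookkeeping. The one point that deserves a sentence of care is making sure the set-theoretic fibre $\pi^{-1}(t)$ of $\Supp(\cF) \to T$ really is $\Supp(\cF_t)$ as a subset of $X$ — i.e. that restricting the family to a fibre and then taking support agrees with taking the support of the whole family and then restricting. This follows from the exercise quoted earlier (compatibility of $\Supp$ with pullback, applied to the inclusion $X \times \set t \hookrightarrow X \times T$), so I would simply invoke that. After that, the closed-map argument finishes the proof, and one concludes that $\cA_U$ satisfies the stated criterion and hence cuts out an open substack $\CCoh(X)_U \subset \CCoh(X)$.
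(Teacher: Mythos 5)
Your proof is correct and is essentially the paper's own argument: both take $Z=\Supp(\cF)$, observe that $Z\cap\bigl((X\setminus U)\times T\bigr)$ is closed in $Z$ and hence has closed image in $T$ by properness of $Z\to T$, and identify the complement of that image as the desired open locus. You additionally spell out the identification of $\Supp(\cF_t)$ with the fibre of $Z\to T$, a point the paper leaves to the reader.
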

\begin{proof}
Let $T$ be a scheme and $\cF\in\CCoh(X)(T)$; let $Z$ be the closed subset of $X\times T$ defined by the closed subscheme $\Supp\cF\subset X\times T$. Let $C\subset X$ be the closed subset $X\setminus U$, and let $W\defeq Z\cap C\times T$. Then $W$ is closed in $Z$, and hence the projection $W\to T$ is universally closed; we denote by $Y$ its image, and by $A\subset T$ its complement, which is open in $T$. We leave it to the reader to verify that $\cF_t\in \cA_U$ if and only if $t\in A$.
\end{proof}

We now want to show that the open substack defined by $\cA_U$ is naturally isomorphic to $\CCoh(U)$; to do so, we first define a morphism of stacks $\CCoh(U)\to \CCoh(X)$ in a more general case.

\begin{prop}\label{embed}
Let $f\colon Y\to X$ be a morphism of schemes,  $T$ a scheme, and fix a family $\cF\in \CCoh(Y)(T)$; denote by $g$ the map $(f,\id_T)\colon Y\times T\to X\times T$. 
\begin{enumerate}
\item [\mylabel{embed-1}{(1)}] If $f$ is a closed embedding, then $g_*\cF\in \CCoh(X)(T)$;
\item [\mylabel{embed-2}{(2)}] If $f$ is an open embedding and $X$ is separated (e.g., quasiprojective), then $g_*\cF\in \CCoh(X)(T)$.
        %\item If $\Supp(\cF)\to T$ has zero-dimensional fibres, then $g_*\cF\in \CCoh(X)(T)$. \redd{move to a later part of the paper}
    \end{enumerate} 
\end{prop}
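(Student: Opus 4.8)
The plan is to verify, for $g_\ast\cF$, the three conditions defining a family in \Cref{def:family-sheaves}: coherence on $X\times T$, flatness over $T$, and properness over $T$ of the scheme‑theoretic support. The first observation is that $g=(f,\id_T)$ is again a closed immersion (resp.\ an open immersion) whenever $f$ is, since $g$ is the base change of $f$ along the projection $X\times T\to X$ and both classes of morphisms are stable under base change. So the statement amounts to controlling the pushforward of a family along a closed or an open immersion.

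I would reduce both cases to the closed‑immersion situation. Case~(1) is already of this form. For case~(2), let $i\colon Z\into Y\times T$ be the closed immersion onto $Z\defeq\Supp(\cF)$; since $\cF$ is supported on $Z$ we have $\cF\cong i_\ast i^\ast\cF=i_\ast\cG$ with $\cG\defeq\cF|_Z\in\Coh(Z)$, so that $g_\ast\cF\cong h_\ast\cG$ where $h\defeq g\circ i\colon Z\to X\times T$. The key point is that $h$ is a \emph{closed} immersion: it is a monomorphism, being a composite of the monomorphisms $i$ (a closed immersion) and $g$ (an open immersion), and it is proper, since $Z\to T$ is proper by hypothesis while $X\times T\to T$ is separated because $X$ is --- and a proper monomorphism is a closed immersion. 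Thus in both cases it suffices to establish: if $h\colon Z\into W$ is a closed immersion, $W\to T$ any morphism, and $\cG\in\Coh(Z)$ is $T$‑flat with $\Supp(\cG)\to T$ proper, then $h_\ast\cG\in\CCoh(W)(T)$; one then applies this with $W=X\times T$, taking $(Z,\cG)=(Y\times T,\cF)$ in case~(1) and the pair just constructed in case~(2).

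For this closed‑immersion statement I would proceed as follows. Coherence of $h_\ast\cG$ is standard, closed immersions being finite. For $T$‑flatness, one notes that at a point $z\in Z$ lying over $t\in T$, with $w=h(z)$, the stalk $(h_\ast\cG)_w$ is $\cG_z$ with the same $\OO_{T,t}$‑module structure (the structure map $\OO_{T,t}\to\OO_{Z,z}$ factors through the surjection $\OO_{W,w}\onto\OO_{Z,z}$), while $(h_\ast\cG)_w=0$ for $w\notin h(Z)$; hence $h_\ast\cG$ is $T$‑flat if and only if $\cG$ is. Applied to $i$ in case~(2), this also shows that $\cG=\cF|_Z$ is $T$‑flat, given that $\cF$ is. For the support, a direct computation with annihilators shows that $\Ann_{\OO_W}(h_\ast\cG)$ is the preimage of $\Ann_{\OO_Z}(\cG)$ under $\OO_W\onto h_\ast\OO_Z$, so that $\OO_W/\Ann(h_\ast\cG)=h_\ast(\OO_Z/\Ann\cG)$; equivalently, $h$ restricts to an isomorphism $\Supp(\cG)\simto\Supp(h_\ast\cG)$ compatible with the maps to $T$. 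Since $\Supp(\cG)\to T$ is proper, so is $\Supp(h_\ast\cG)\to T$. In case~(2) one records along the way that $\Supp(\cG)=Z=\Supp(\cF)$, so the support being transported is exactly the one assumed proper.

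I expect the only non‑formal step to be the reduction in case~(2): one must avoid pushing $\cF$ forward directly along the open immersion $g$, which in general fails to preserve coherence (pushforward along $\BA^1\setminus\set 0\into\BA^1$ being the standard example), and the whole content is that properness of $\Supp(\cF)\to T$ together with separatedness of $X$ promotes the restriction of $g$ to the support to a closed immersion --- after which the argument of case~(1) runs verbatim. Everything else is routine bookkeeping with stalks and annihilator ideals.
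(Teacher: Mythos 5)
Your proof is correct, and it shares the paper's central idea --- restrict $\cF$ to its support $Z$, and use properness of $Z\to T$ together with separatedness of $X\times T\to T$ (cancellation) to make the map from $Z$ into $X\times T$ proper --- but the execution of case (2) differs at the key step. The paper takes the scheme-theoretic image $W$ of $Z$ in $X\times T$ and proves $W$ is proper over $T$ by a surjectivity argument (the image of a universally closed morphism under a surjection is universally closed), then concludes coherence of $g_*\cF=(j\circ h)_*i^*\cF$ from properness of $j\circ h$ and locates the support inside $W$. You instead observe that $h=g\circ i\colon Z\to X\times T$ is a proper monomorphism, hence a \emph{closed immersion}, which collapses case (2) into case (1) outright. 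This buys you two things: you avoid the separate universally-closed bookkeeping for $W$, and --- since your closed-immersion lemma tracks flatness and the scheme-theoretic support through the pushforward via stalks and annihilators --- you explicitly verify $T$-flatness of $g_*\cF$ in case (2), a point the paper's written proof leaves implicit (it only records coherence and properness of the support there). The cost is that you invoke the slightly less elementary fact that proper monomorphisms are closed immersions, where the paper gets by with a bare topological image argument. Both routes are complete; yours is arguably the cleaner reduction.
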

\begin{proof}
We proceed step by step.
\begin{enumerate}
\item If $f$ is a closed embedding, then so is $g$. Thus for every $(x,t)\in X\times T$ we have $(g_*\cF)_{(x,t)}=0$ if $x\notin Y$ and $(g_*\cF)_{(x,t)}=\cF_{(x,t)}$ if $x\in Y$, thus $g_*\cF$ is flat over $T$. Moreover $g$ is proper so $g_*\cF$ is coherent \cite[\href{https://stacks.math.columbia.edu/tag/02O3}{Tag 02O3}]{stacks-project}.
\item Let $Z\defeq\Supp \cF\subset Y\times T$, and let $W\subset X\times T$ the scheme theoretic image of $Z$. Since $Z$ is proper over $T$ and $X$ is separated, by \cite[Cor.~II.4.8(e)]{Hartshorne_AG}  the morphism $h=g|_Z\colon Z\to X\times T$ is proper, so the induced map $h=g|_Z\colon Z\to W$ is surjective. This implies that $W$ is also proper over $T$. Indeed, it is clearly separated and of finite type; it is also universally closed, since for every base change $T'\to T$ and induced cartesian diagram
\[
\begin{tikzcd}
Z'\ar[r]\ar[d, "\phi"] & Z\ar[d, "h"]\\
W'\ar[r]\ar[d, "\pi"] & W\ar[d]\\
T'\ar[r] & T
\end{tikzcd}
\]
and any $C'\subset W'$ closed subset, we have $\pi(C')=\pi(\phi(\phi^{-1}(C'))$ since $\phi$ is surjective (cf.~\cite[\href{https://stacks.math.columbia.edu/tag/01RY}{Tag 01RY}]{stacks-project}) and the latter is closed since $\pi\circ \phi$ is closed.

From the commutative diagram
\[
\begin{tikzcd}
Z \arrow[r, "h"] \arrow[d,hook, "i"]
& W \arrow[d,hook, "j"] \\
Y\times T \arrow[r, "g"]
& X\times T
\end{tikzcd}
\]
we get that 
\[
g_*\cF=g_*i_*i^*\cF=j_*h_*i^*\cF=(j\circ h)_\ast i^*\cF 
\]
hence $g_*\cF\in \Coh(X\times T)$, since $j\circ h$ is proper, and $\Supp(g_\ast \cF)\subset W$ is proper over $T$.\qedhere
\end{enumerate}
\end{proof}

\begin{corollary}\label{cor:Coh(U)-is-open}
Let $f\colon U\into X$ be an open embedding of schemes. Then the morphism $f_*\colon \CCoh(U)\to \CCoh(X)$ as defined in \Cref{embed} is an open embedding of stacks. 
\end{corollary}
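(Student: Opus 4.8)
The plan is to identify $f_\ast$ with the open immersion $\CCoh(X)_U \into \CCoh(X)$ produced in \Cref{lemma:supp-U}. More precisely, I will factor $f_\ast$ as $\CCoh(U)\xrightarrow{\sim}\CCoh(X)_U\hookrightarrow\CCoh(X)$, with the first arrow an isomorphism of stacks; since the second arrow is an open embedding by construction, the corollary follows.

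\emph{The map lands in $\CCoh(X)_U$.} Let $\cF\in\CCoh(U)(T)$ and write $g=(f,\id_T)\colon U\times T\into X\times T$, an open embedding. Tracing through the proof of \Cref{embed}(2) (where $X$ is separated, as is needed for $f_\ast$ to be defined), the support of $g_\ast\cF$ is contained in the scheme-theoretic image $W$ of $Z\defeq\Supp\cF$; since $Z$ is proper over $T$ and $X\times T$ is separated over $T$, the morphism $Z\to X\times T$ is closed, so set-theoretically $W=|Z|\subset U\times T$, whence $\Supp(g_\ast\cF)\subset U\times T$. Intersecting with $X\times\set{t}$ gives $\Supp((g_\ast\cF)_t)\subset U$ for every $t\in T$, i.e.\ $(g_\ast\cF)_t\in\cA_U$; so $g_\ast$ does define a morphism $\CCoh(U)\to\CCoh(X)_U$.

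\emph{Restriction is an inverse.} For $\cG\in\CCoh(X)_U(T)$ set $\Phi(\cG)\defeq g^\ast\cG=\cG|_{U\times T}$. Pullback along the open embedding $g$ preserves $T$-flatness, so it remains to see that $\Supp(g^\ast\cG)=g^{-1}\Supp\cG$ is proper over $T$. Here the key point is that the fibrewise condition $\Supp(\cG_t)\subset U$ for all closed $t$ forces $\Supp\cG\subset U\times T$: if $(x,t)$ is a closed point with $\cG_{(x,t)}\ne0$, then by Nakayama the fibre $\cG\otimes\kappa(x,t)$ is nonzero, hence $(\cG_t)_x\ne0$ and $x\in\Supp(\cG_t)\subset U$; since closed points are dense in the closed set $\Supp\cG$ while $U\times T$ is open, we conclude $\Supp\cG\subset U\times T$. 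Thus $g$ restricts to an isomorphism $\Supp\cG\simto\Supp(g^\ast\cG)$, so the latter is proper over $T$ and $\Phi(\cG)\in\CCoh(U)(T)$.

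\emph{Conclusion.} For $\cF$ on $U\times T$ the adjunction unit $\cF\simto g^\ast g_\ast\cF$ is an isomorphism because $g$ is an open embedding; for $\cG\in\CCoh(X)_U(T)$, since $\Supp\cG\subset U\times T$ the natural map $\cG\to g_\ast g^\ast\cG$ is an isomorphism (a coherent sheaf is recovered from its restriction to any open subset containing its support, cf.\ the characterisation of $\Supp$ as the smallest closed subscheme through which $\cG$ factors). Both isomorphisms are natural in the family and compatible with base change $S\to T$, so $g_\ast$ and $g^\ast=\Phi$ assemble into mutually quasi-inverse isomorphisms of stacks $\CCoh(U)\cong\CCoh(X)_U$ lying over $\CCoh(X)$. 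The only step I expect to require genuine care is the implication ``fibrewise support in $U$ $\Rightarrow$ total support in $U\times T$'' above; the remaining verifications --- preservation of $T$-flatness and coherence under $g^\ast$ and $g_\ast$, and the $2$-categorical compatibilities --- are routine.
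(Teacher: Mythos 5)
Your proof is correct, and it supplies exactly the argument the paper leaves implicit: the image of $f_*$ is the open substack $\CCoh(X)_U$ of \Cref{lemma:supp-U}, with restriction $g^*$ as a quasi-inverse. One small point of phrasing: the step ``fibrewise support in $U$ implies $\Supp\cG\subset U\times T$'' is better justified by noting that the closed set $\Supp\cG\cap((X\setminus U)\times T)$, if nonempty, would contain a closed point (finite type over a field), contradicting what you proved for closed points --- the appeal to density alone does not quite close the argument, but the fact you need is exactly this Jacobson property.
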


See \cite[\href{https://stacks.math.columbia.edu/tag/0DLX}{Tag 0DLX}]{stacks-project} for the construction of the pushforward morphism 
\[
\begin{tikzcd}
\CCoh(Y) \arrow{r}{\pi_\ast} & \CCoh(X)
\end{tikzcd}
\]
for any quasifinite morphism $\pi \colon Y \to X$ of separated schemes.
    
\subsubsection{\texorpdfstring{The open substack of $m$-regular sheaves}{}}

\begin{definition} Let $X$ be a projective scheme with a chosen very ample line bundle $\OO_X(1)$. Let $\CF\in \Coh(X)$, and $m \in \BN$. We say that $\CF$ is {\em $m$-regular} in the sense of Castelnuovo--Mumford if $\HH^i(X,\CF(m-i))=0$ for all $i$ such that $0<i\le \dim X$. 
\end{definition} 

\begin{lemma}[{\cite{Mumford-Lectures-on-curves}}]\label{reg-lemma}
Every coherent sheaf $\CF$ on $X$ is $m$-regular for some $m \gg 0$. Moreover, if $\CF$ is $m$-regular, then
\begin{itemize}
    \item [\mylabel{mreg-i}{(i)}] $\CF$ is also $m'$-regular for $m' > m$,
    \item [\mylabel{mreg-ii}{(ii)}] $\CF(m)$ is globally generated, i.e.~the evaluation morphism 
    \[
    \begin{tikzcd}
    \HH^0(X,\CF(m)) \otimes_\bfk \OO_X(-m) \arrow{r} & \CF
    \end{tikzcd}
    \]
    is surjective, and 
    \item [\mylabel{mreg-iii}{(iii)}] $\HH^i(X,\CF(m))=0$ for all $i>0$.
\end{itemize}
\end{lemma}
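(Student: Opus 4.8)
The plan is to run Mumford's classical hyperplane-section induction. The first assertion is quick: by Serre vanishing there is an integer $m_0$ with $\HH^i(X,\CF(k))=0$ for all $i>0$ and all $k\ge m_0$, and then $\CF$ is $m$-regular for every $m\ge m_0+\dim X$, since $m-i\ge m_0$ whenever $0<i\le\dim X$. For the ``moreover'' part I would argue by induction on $r:=\dim\Supp\CF$. It is convenient first to record that, by Grothendieck vanishing ($\HH^i(X,-)=0$ in degrees exceeding $\dim\Supp\CF$), being $m$-regular is equivalent to asking $\HH^i(X,\CF(m-i))=0$ for \emph{all} $i>0$; this is the form I will use throughout. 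The base case $r=0$ is immediate: a coherent sheaf supported at finitely many closed points has vanishing higher cohomology, is unaffected by twisting, and is trivially globally generated.

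For the inductive step, assume $r\ge 1$ and $\CF$ is $m$-regular. Using that $\bfk$ is infinite and $\OO_X(1)$ is very ample, I would choose a section $t\in\HH^0(X,\OO_X(1))$ whose zero locus $H=V(t)$ contains none of the finitely many associated points of $\CF$ --- possible because the sections vanishing at a fixed point form a proper linear subspace, so a general $t$ avoids all of them. Then multiplication by $t$ is injective on $\CF$ and there is a short exact sequence
\[
0\longrightarrow \CF(-1)\xrightarrow{\,\cdot t\,}\CF\longrightarrow \CF_H\longrightarrow 0,
\]
where $\CF_H:=\CF\otimes_{\OO_X}\OO_H$ is a coherent sheaf on $H$ with $\dim\Supp\CF_H\le r-1$ (as $H$ contains no generic point of $\Supp\CF$). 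Twisting this sequence by $\OO_X(m-j)$ and reading the long exact cohomology sequence, the two vanishings $\HH^j(X,\CF(m-j))=0$ and $\HH^{j+1}(X,\CF(m-j-1))=0$ --- both instances of $m$-regularity of $\CF$ --- sandwich $\HH^j(H,\CF_H(m-j))$ between zeros for every $j>0$, so $\CF_H$ is $m$-regular on $H$ and the inductive hypothesis applies to it (note $H$ is again projective, with very ample $\OO_X(1)|_H$).

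With $\CF_H$ in hand, I would first prove the single strengthened statement $\HH^i(X,\CF(l))=0$ for all $i>0$ and all $l\ge m-i$; this gives (iii) (take $l=m$) and (i) (take $l=m'-i$ for $m'>m$). Fix $i>0$ and induct on $l\ge m-i$: the base $l=m-i$ is $m$-regularity (or Grothendieck vanishing if $i>\dim X$), and for the step one twists the displayed sequence by $\OO_X(l+1)$ and reads off $\HH^i(X,\CF(l+1))=0$ from $\HH^i(X,\CF(l))=0$ (induction on $l$) together with $\HH^i(H,\CF_H(l+1))=0$, the latter being the same strengthened statement for the $m$-regular sheaf $\CF_H$, available by the induction on $r$ since $l+1\ge m-i$. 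For global generation (ii), the case $i=1$, $l=m-1$ just proved gives $\HH^1(X,\CF(m-1))=0$, hence restriction $\HH^0(X,\CF(m))\to\HH^0(H,\CF_H(m))$ is surjective. To test generation at a point $x\in X$ I would now take $H$ general \emph{through} $x$ rather than avoiding the associated points; the only change is that $\ker(\cdot t)$ may be nonzero, but for such a general $H$ this kernel is supported at finitely many points, so it has no higher cohomology and disturbs neither the $m$-regularity of $\CF_H$ nor the surjectivity of restriction on $\HH^0$. Since $\CF_H(m)$ is globally generated on $H$ by induction, $\HH^0(H,\CF_H(m))$ surjects onto the fibre $\CF_H(m)\otimes\bfk(x)$, which equals $\CF(m)\otimes\bfk(x)$ because $x\in H$; composing, $\HH^0(X,\CF(m))\to\CF(m)\otimes\bfk(x)$ is onto for every $x$, and Nakayama's lemma upgrades this to surjectivity of $\HH^0(X,\CF(m))\otimes_{\bfk}\OO_X(-m)\to\CF$.

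The step I expect to demand the most care is the management of the hyperplane section: choosing it to avoid the associated points of $\CF$ for (i) and (iii), but through a prescribed point for (ii), and then verifying that the possible low-dimensional $t$-torsion in the basic sequence is harmless in both roles. An alternative that sidesteps this bookkeeping is to reduce at the outset to $X=\Pb^N$ via a closed embedding $\iota\colon X\into\Pb^N$: $m$-regularity, vanishing of higher cohomology, and global generation are all preserved and reflected by $\iota_\ast$ (using Grothendieck vanishing to reconcile the different ranges of cohomological indices), and on $\Pb^N$ a general hyperplane is a $\Pb^{N-1}$ with $\HH^0(\Pb^N,\OO(1))\to\HH^0(\Pb^{N-1},\OO(1))$ manifestly surjective, which permits the slightly slicker formulation of the induction through the multiplication maps $\HH^0(\CF(m))\otimes\HH^0(\OO(k))\to\HH^0(\CF(m+k))$.
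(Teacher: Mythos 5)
The paper does not prove this lemma at all: it is stated with a citation to Mumford, so there is no in-text argument to compare against. Your proof is the classical Mumford hyperplane-section induction (the same one found in the cited source and in Huybrechts--Lehn), and it is correct: the double induction on $\dim\Supp\CF$ and on the twist $l$ gives (i) and (iii), and the only delicate point --- that for global generation you must slice through the test point $x$, so that $\cdot t$ may acquire a kernel $K$ supported at $x$ --- is handled properly, since $K$ being $0$-dimensional leaves the identifications $\HH^i(Q(l))\cong\HH^i(\CF(l-1))$ for $i\ge 1$ intact, hence preserves both the $m$-regularity of $\CF_H$ and the surjectivity of $\HH^0(X,\CF(m))\to\HH^0(H,\CF_H(m))$.
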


\begin{remark}\label{rmk:m-reg-0dim}
Let $X$ be a projective scheme, and $\CF\in \Coh(X)$ a sheaf with $0$-dimensional support. Then $\CF$ is $m$-regular for every $m\in \BN$ \cite[p.~28]{modulisheaves}.
\end{remark}

\begin{prop}\label{prop:m-reg-is-open}
Let $(X,\OO_X(1))$ be a projective scheme, and $m\in \BN$. Then the subcategory of $\Coh(X)$ of $m$-regular sheaves defines an open substack $\CCoh_{m}(X)\subset \CCoh(X)$.
\end{prop}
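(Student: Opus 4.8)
The plan is to verify the openness criterion recalled just above \Cref{lemma:supp-U}: for every scheme $T$ and every family $\cF \in \CCoh(X)(T)$, one must show that the subset $\set{t \in T : \cF_t \text{ is } m\text{-regular}}$ is open in $T$. The engine of the proof is the semicontinuity theorem for the cohomology of a flat coherent sheaf along a projective (indeed, proper) morphism.

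First I would note that, since $X$ is projective, the projection $p \colon X \times T \to T$ is projective, and for every integer $j$ the twisted sheaf $\cF(j) \defeq \cF \otimes_{\OO_{X\times T}} \pr_X^\ast \OO_X(j)$ is coherent on $X \times T$ and still flat over $T$, because $\pr_X^\ast\OO_X(j)$ is a line bundle and twisting by a line bundle preserves $T$-flatness. Moreover, restriction to the fibre over $t$ is compatible with twisting, so that $(\cF(j))_t \cong \cF_t(j)$ in $\Coh(X)$ for every $t \in T$.

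Next, for each $i$ in the finite range $0 < i \le \dim X$, I would apply the semicontinuity theorem \cite[III, Thm.~12.8]{Hartshorne_AG} to the projective morphism $p$ and the $T$-flat coherent sheaf $\cF(m-i)$: the function
\[
t \longmapsto \dim_{\kappa(t)} \HH^i\bigl(X, \cF_t(m-i)\bigr)
\]
is upper semicontinuous on $T$, hence the locus $U_i \defeq \set{t \in T : \HH^i(X, \cF_t(m-i)) = 0}$ is open in $T$. By definition $\cF_t$ is $m$-regular if and only if $\HH^i(X, \cF_t(m-i)) = 0$ for every $i$ with $0 < i \le \dim X$, that is, precisely when $t \in \bigcap_{0 < i \le \dim X} U_i$. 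Being a finite intersection of open subsets, this locus is open, which is exactly what the criterion demands.

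There is no genuine obstacle here; the only points requiring a little care are that $\dim X$ is finite, so only finitely many vanishings are imposed (and the case $\dim X = 0$ is covered, with the convention that an empty intersection is all of $T$), and the compatibility of twisting with passage to fibres. One could instead argue via cohomology and base change together with \Cref{flatness-criteria}, but invoking semicontinuity directly is the most economical route.
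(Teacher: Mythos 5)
Your argument is correct and follows essentially the same route as the paper: the paper invokes cohomology and base change applied to the twists $\cF(m-i)$ to produce, around each point where $\cF_p$ is $m$-regular, an open neighbourhood on which each $\HH^i$ vanishes, and then intersects the finitely many resulting opens, which is the same mechanism as your direct appeal to the semicontinuity theorem. The auxiliary points you flag (twisting preserves flatness and commutes with restriction to fibres, finiteness of $\dim X$) are exactly the ones implicitly used there, so nothing is missing.
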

\begin{proof}
Let $T$ be a scheme, $m\in \BN$ and $\cF\in \CCoh(X)(T)$. Set $d=\dim X$. Note that for every $n\in \BN$, one has $\cF(n)\in \CCoh(X)(T)$. Assume now that $p\in T$ is chosen in such a way that $\cF_p$ is $m$-regular. By applying cohomology and base change to $\cF(m-i)$ (for every $i$ with $0<i\le \dim X$) it follows that there exists $V_i$, open neighbourhood of $p$ in $T$, such that $\HH^i(X,\cF_q(m-i))=0$ for every $q\in V_i$. Let $V\defeq V_1\cap\cdots\cap V_d$; then $V$ is an open neighbourhood of $p$ in $T$, and for every $q\in V$ the sheaf $\cF_q$ is $m$-regular.
\end{proof}

 \begin{corollary}
     Let $(X,\OO_X(1))$ be a projective scheme, $T$ a scheme, and $\cF\in \CCoh(X)(T)$. Then there exists $m_0\in \BN$ such that for every $p\in T$ the sheaf $\cF_p$ is $m_0$-regular. 
 \end{corollary}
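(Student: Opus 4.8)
The plan is to deduce this from the openness statement in \Cref{prop:m-reg-is-open}, the monotonicity of regularity in \Cref{reg-lemma}\ref{mreg-i}, and the quasi-compactness of $T$. First I would introduce, for each $m\in\BN$, the subset
\[
U_m \defeq \set{t\in T \mid \cF_t \text{ is } m\text{-regular}},
\]
which is open in $T$ by \Cref{prop:m-reg-is-open} (applied to the family $\cF$). By part \ref{mreg-i} of \Cref{reg-lemma}, an $m$-regular sheaf is $m'$-regular for every $m'\ge m$, so $U_m\subseteq U_{m+1}$; thus $\set{U_m}_{m\in\BN}$ is an increasing chain of open subsets of $T$.

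Next I would check that this chain exhausts $T$. Given any point $t\in T$, the fibre $\cF_t$ is a coherent sheaf on the projective scheme $X$, hence (by the first assertion of \Cref{reg-lemma}) it is $m$-regular for some $m\gg 0$, i.e.\ $t\in U_m$. Therefore $T=\bigcup_{m\in\BN}U_m$. Since all schemes in this paper are of finite type over $\bfk$, $T$ is noetherian, in particular quasi-compact, so this open cover has a finite subcover; as the $U_m$ are nested, $T=U_{m_0}$ where $m_0$ is the largest index appearing. By construction this says exactly that $\cF_p$ is $m_0$-regular for every $p\in T$.

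I do not expect any genuine obstacle: all the substance is already packaged in \Cref{prop:m-reg-is-open} (openness of the regularity locus in the base) and \Cref{reg-lemma} (monotonicity together with pointwise existence of a regularity index), and the corollary is simply the remark that an increasing open cover of a quasi-compact space stabilises. The one point worth flagging is that quasi-compactness of $T$ is used in an essential way; without it the uniform bound need not exist.
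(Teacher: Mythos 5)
Your proof is correct and follows exactly the same route as the paper's: open loci $T_m$ of $m$-regularity via \Cref{prop:m-reg-is-open}, nesting via \Cref{reg-lemma}\ref{mreg-i}, exhaustion of $T$ from the pointwise existence of a regularity index, and stabilisation of the increasing open cover by quasi-compactness of $T$. No gaps.
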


\begin{proof}
Each subset
\[
T_m=\set{p\in T\,|\, \cF_p\text{ is $m$-regular}}\subset T.
\]
is open in $T$ by \Cref{prop:m-reg-is-open}. Moreover, there is an inclusion $T_m\subset T_{m+1}$ by \Cref{reg-lemma}, and $T=\bigcup_{m\in \BN}T_m$. Since $T$ is quasicompact, there must be an $m_0$ such that $T$ is the union of all $T_m$ for $m\le m_0$, which implies $T=T_{m_0}$.
\end{proof}
\subsubsection{Fixed Hilbert polynomial}\label{sec:fixed-P}
The final open substack of $\CCoh(X)$ we want to define is the one parametrising sheaves with fixed Hilbert polynomial. Again we have to assume $X$ is projective, and fix a very ample invertible sheaf $\OO_X(1)$. 

For each $\CF\in \Coh(X)$ its Hilbert polynomial $P_\CF\in \BQ[z]$, with respect to $\OO_X(1)$, is defined by setting $P_\CF(k)=\chi(\CF(k))$ for every $k\in \BZ$; equivalently, $P_\CF$ can be defined by requiring $P_\CF(k)=h^0(X,\CF(k))$ for every $k$ such that $\HH^i(X,\CF(k))=0$ for all $i~>~0$, since every sufficiently large $k$ satisfies this condition by Serre vanishing. We denote by $\Coh^P(X)$ the subcategory of $\Coh(X)$ whose objects are sheaves with Hilbert polynomial $P$.

The goal of this subsection is to show that $\Coh^P(X)$ defines an open and closed substack $\CCoh^P(X)$ of $\CCoh(X)$. By definition, it is enough to prove the following.

\begin{prop}\label{prop:hilb-poly-constant}
Let $(X,\OO_X(1))$ be a projective scheme, $T$ a connected scheme and $\cF\in \CCoh(X)(T)$. Then the Hilbert polynomial $P_t$ of $\cF_t$ is the same for every $t\in T$.
\end{prop}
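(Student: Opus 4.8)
The plan is to reduce to the equivalence of conditions \ref{flat-1} and \ref{flat-3} in \Cref{flatness-criteria}. The subtlety is that \Cref{flatness-criteria} only asserts that \ref{flat-1} implies \ref{flat-3} (that the Hilbert polynomial is \emph{locally} constant) in general, with the reverse implication requiring $T$ reduced; so I cannot literally quote the full equivalence. But the proposition only needs the forward direction: local constancy of $t\mapsto P_t$ on a connected base gives global constancy. So the real work is just arranging to apply \Cref{flatness-criteria} in a setting where $f$ is projective and $T$ noetherian.

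First I would observe that, because $\CF \in \CCoh(X)(T)$, the support $Z := \Supp(\CF) \to T$ is a proper morphism, and $\OO_X(1)$ pulled back to $X\times T$ and restricted to $Z$ is relatively ample for $Z \to T$ (it is the restriction of the pullback of a very ample bundle under $X \times T \to X$, and $Z \to X\times T$ is a closed immersion, so $Z \to T$ factors through $\BP^N \times T \to T$ for some $N$ and the restriction of $\OO_{\BP^N}(1)$ is $f$-ample). Hence $Z \to T$ is in fact a projective morphism. Since all our schemes are of finite type over $\bfk$ (by the standing conventions), $T$ is noetherian. Now $\CF$, viewed as a sheaf on $Z$ (via the canonical isomorphism $\CF \cong i_\ast i^\ast \CF$ where $i\colon Z \into X\times T$ — see the Remark characterising $\Supp(\CF)$), is $T$-flat because $\CF$ is $T$-flat and $i$ is a closed immersion (stalks are unchanged). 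Thus condition \ref{flat-1} of \Cref{flatness-criteria} holds for $\CF$ on $Z \to T$.

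Next I would invoke \Cref{flatness-criteria}: \ref{flat-1} implies \ref{flat-3}, i.e.\ the function
\[
t \longmapsto \bigl(k \mapsto \chi\bigl(\CF_t \otimes_{\OO_{Z_t}} \CL_t^{\otimes k}\bigr)\bigr)
\]
is locally constant on $T$, where $\CL$ is the relatively ample bundle above. One must check that $\chi(\CF_t \otimes_{\OO_{Z_t}}\CL_t^{\otimes k})$, computed on the fibre $Z_t$, equals the Hilbert polynomial $P_t(k) = \chi(\CF_t(k))$ computed on $X$ — but this is immediate since $\CF_t$ is supported (scheme-theoretically) on $Z_t$, so pushforward along the closed immersion $Z_t \into X$ preserves both the sheaf and its cohomology, and $\CL_t$ is the restriction of $\OO_X(1)$. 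Therefore $t \mapsto P_t$ is a locally constant map from $T$ to the discrete set $\BQ[z]$.

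Finally, since $T$ is connected, a locally constant function on $T$ is constant; hence $P_t = P_{t'}$ for all $t, t' \in T$. The main obstacle, such as it is, is purely bookkeeping: verifying that $Z \to T$ is genuinely projective (not merely proper) so that \Cref{flatness-criteria} applies, and identifying the fibrewise Euler characteristic on $Z_t$ with the Hilbert polynomial on $X$. Both reduce to the elementary fact that a closed immersion is finite, hence has exact, cohomology-preserving pushforward, together with the characterisation of $\Supp(\CF)$ as the smallest closed subscheme through which $\CF$ factors.
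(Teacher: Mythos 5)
Your proof is correct, but it routes through a different earlier result than the paper does. You quote \Cref{flatness-criteria} (the implication \ref{flat-1}$\Rightarrow$\ref{flat-3}, which indeed does not need $T$ reduced) and then use connectedness; the paper instead invokes the corollary on uniform $m_0$-regularity of the family (itself a consequence of \Cref{prop:m-reg-is-open} and quasicompactness of $T$) and then runs the cohomology-and-base-change argument explicitly: for $m\ge m_0$ the sheaf $\pr_{T\ast}\cF(m)$ is locally free of constant rank $r_m$ on the connected base, and $P_t(m)=r_m$, so $P_{t_1}-P_{t_2}$ has infinitely many roots. The two arguments rest on the same underlying facts (Serre vanishing giving a uniform twist, cohomology and base change, constancy of rank on a connected base), so the difference is mostly one of packaging: your version is shorter because it delegates the work to \Cref{flatness-criteria}, whose proof the paper only sketches in one line, whereas the paper's version is self-contained and reuses the $m$-regularity machinery it has just built. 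One genuine simplification you missed: since $X$ is assumed projective, $X\times T\to T$ is already a projective morphism with relatively ample bundle $\pr_X^\ast\OO_X(1)$, so you can apply \Cref{flatness-criteria} directly on $X\times T$ without the detour through $Z=\Supp(\cF)$; that detour is harmless (your identification of $\chi$ on $Z_t$ with $P_t(k)$ via the closed immersion is fine), but it is only needed when $X$ is merely quasiprojective and one must compactify the support, which is not the situation here.
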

\begin{proof}
Choose $m_0\in \mathbb N$ such that $\cF_t$ is $m_0$-regular for every $t\in T$. Since $\cF_t$ is also $m$-regular for every $m\ge m_0$, it follows that $\HH^i(X,\cF_t(m))=0$ for every $i>0$ and every $m\ge m_0$. Let $\pr_T \colon X \times T \to T$ be the projection. By cohomology and base change, for every $m\ge m_0$ the sheaf $\pr_{T\ast}\cF(m)$ is locally free on $T$ and commutes with base change; its rank $r_m$ (which is constant since $T$ is connected) is equal to the dimension of its fibre  $\pr_{T\ast}\cF(m)_t\otimes_{\OO_T} \kappa(t)$  for every $t\in T$, and  
\[
\begin{tikzcd}
\pr_{T\ast}\cF(m)_t\otimes_{\OO_T} \kappa(t)\arrow{r} & \HH^0(X,\cF_t(m))
\end{tikzcd}
\]
is an isomorphism. Hence for every $t\in T$, one has $P_t(m)=r_m$ for every $m\ge m_0$. Given $t_1,t_2\in T$, the polynomial $P_{t_1}-P_{t_2}$ must vanish for every $m\ge m_0$, whence the result.
\end{proof}

It follows that there is an open substack 
\[
\CCoh^P(X)\subset \CCoh(X)
\]
parametrising coherent sheaves $\CF$ with Hilbert polynomial $P$. Since $\CCoh(X)$ is the disjoint union, varying $P$, of these open substacks, it follows that each of them is, in fact, open and closed.

\subsection{\texorpdfstring{Exhaustion of $\CCoh^P(X)$ by global quotients}{}}

Let $(X,\OO_X(1))$ be a polarised projective scheme, $P \in \BQ[z]$ a polynomial.
Since every coherent sheaf is $m$-regular for some $m$, we have an exhaustion of $\CCoh^P(X)$ by open substacks
\[
\cdots \into \CCoh^P_m(X) \into \CCoh^P_{m+1}(X) \into \cdots
\]
and it turns out that each $\CCoh^P_m(X) = \CCoh^P(X) \cap \CCoh_m(X)$ is a global quotient stack. More precisely, consider the open subscheme
\[
U_m^P\subset \Quot_{X}(\OO_{X}(-m)^{\oplus P(m)},P)
\] 
parametrising isomorphism classes $[\theta]$ of quotients
\[
\begin{tikzcd}
\OO_{X}(-m)^{\oplus P(m)} \arrow[two heads]{r}{\theta} & \CF
\end{tikzcd}
\]
such that $\CF$ is $m$-regular and the induced map
\begin{equation}\label{iso-twisted}
\begin{tikzcd}
    \bfk^{\oplus P(m)} = \HH^0(X,\OO_X^{\oplus P(m)}) \arrow{rr}{\HH^0(X,\theta(m))} && \HH^0(X,\CF(m))
\end{tikzcd}
\end{equation}
is an isomorphism. Then we have the following result.

\begin{theorem}\label{thm:global-quot-1}
Let $(X,\OO_X(1))$ be a polarised projective scheme, $P \in \BQ[z]$ a polynomial, $m$ an integer. There is an isomorphism of stacks
\[
\CCoh^P_m(X) \cong [U_m^P / \GL_{P(m)}].
\]
In particular, $\CCoh^P_m(X)$ is an algebraic stack, with $U^P_m$ serving as a smooth atlas.
\end{theorem}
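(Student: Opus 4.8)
The plan is to exhibit $U_m^P$ as a $\GL_{P(m)}$-torsor over $\CCoh_m^P(X)$, or equivalently to set up mutually inverse morphisms between the quotient stack $[U_m^P/\GL_{P(m)}]$ and $\CCoh_m^P(X)$. First I would construct the morphism $\pi\colon U_m^P \to \CCoh_m^P(X)$: given a family over $T$ of quotients $\theta\colon \OO_X(-m)^{\oplus P(m)}_T \twoheadrightarrow \cF$ in $\Quot_X(\OO_X(-m)^{\oplus P(m)},P)$ landing in $U_m^P$, simply forget $\theta$ and remember the sheaf $\cF \in \CCoh_m^P(X)(T)$; it lies in the $m$-regular substack by the openness built into the definition of $U_m^P$ together with \Cref{prop:m-reg-is-open}, and its support is automatically proper since $X$ is projective. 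The group $\GL_{P(m)}$ acts on $\Quot_X(\OO_X(-m)^{\oplus P(m)},P)$ by precomposition with automorphisms of $\OO_X(-m)^{\oplus P(m)}$; this action preserves $U_m^P$ (since $m$-regularity of $\cF$ is unchanged and the condition \eqref{iso-twisted} is stable under composing with a linear automorphism of $\bfk^{\oplus P(m)}$), and $\pi$ is visibly $\GL_{P(m)}$-invariant. Hence $\pi$ descends to a morphism $\bar\pi\colon [U_m^P/\GL_{P(m)}] \to \CCoh_m^P(X)$.

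Next I would construct a quasi-inverse. Given $\cF \in \CCoh_m^P(X)(T)$, i.e. a $T$-flat $m$-regular family with Hilbert polynomial $P$, cohomology and base change (as in the proof of \Cref{prop:hilb-poly-constant}, using \ref{mreg-iii}) shows that $\cW \defeq \pr_{T\ast}(\cF(m))$ is locally free of rank $P(m)$ on $T$ and formation commutes with base change; moreover \ref{mreg-ii} gives a canonical surjection $\pr_X^\ast\cW \otimes \OO_X(-m) \twoheadrightarrow \cF$, i.e. a $\Quot$-point twisted by the line bundle $\cW$ on $T$. Choosing locally on $T$ a trivialisation $\cW \cong \OO_T^{\oplus P(m)}$ produces a $U_m^P$-point over that open set, and two trivialisations differ by a map to $\GL_{P(m)}$; this is exactly the datum of a morphism $T \to [U_m^P/\GL_{P(m)}]$. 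One checks this is functorial in $T$ and compatible with pullback, giving a morphism $\sigma\colon \CCoh_m^P(X) \to [U_m^P/\GL_{P(m)}]$.

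Finally I would verify $\bar\pi$ and $\sigma$ are mutually quasi-inverse. The composite $\bar\pi\circ\sigma$ sends $\cF$ to the target sheaf of the canonical evaluation surjection, which is $\cF$ itself — so this is (2-isomorphic to) the identity. For $\sigma\circ\bar\pi$, starting from $[\theta\colon \OO_X(-m)^{\oplus P(m)}_T \twoheadrightarrow \cF]$ one recovers $\cF$, forms $\cW = \pr_{T\ast}(\cF(m))$, and the isomorphism \eqref{iso-twisted} (in its relative, base-changed form, valid because both sides commute with base change) identifies $\OO_T^{\oplus P(m)} \xrightarrow{\sim} \cW$; under this identification the canonical surjection becomes $\theta$ back again, up to the $\GL_{P(m)}$-ambiguity that is precisely quotiented out — so $\sigma\circ\bar\pi$ is 2-isomorphic to the identity. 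The last sentence of the statement is then immediate: $[U_m^P/\GL_{P(m)}]$ is algebraic with smooth atlas $U_m^P$ because $\GL_{P(m)}$ is smooth and affine and $U_m^P$, being open in a $\Quot$ scheme, is a scheme.

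The main obstacle I expect is bookkeeping the $\GL_{P(m)}$-equivariance and the "twist by a line bundle on $T$" carefully enough that $\sigma$ is genuinely a morphism of stacks and not just a bijection on isomorphism classes: one must present a $T$-point of $[U_m^P/\GL_{P(m)}]$ as a $\GL_{P(m)}$-torsor $\cP \to T$ together with a $\GL_{P(m)}$-equivariant map $\cP \to U_m^P$, and the natural choice is the frame bundle $\cP = \underline{\Isom}(\OO_T^{\oplus P(m)}, \cW)$ equipped with the tautological surjection pulled back to $\cP$. Checking that this assignment respects base change (so that it defines a morphism of stacks) and that the two composites carry compatible 2-isomorphisms is the only genuinely delicate point; everything else — $m$-regularity being open, Hilbert polynomial being locally constant, cohomology and base change — has already been established in the excerpt and can be cited directly.
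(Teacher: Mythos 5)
Your proposal is correct and follows essentially the same route as the paper: both identify $\bfk$-points via a choice of basis of $\HH^0(X,\CF(m))$ and then promote this to families by taking the frame bundle $\Isom(\OO_T^{\oplus P(m)},\pr_{T\ast}\cF(m))$ as the $\GL_{P(m)}$-torsor, equipped with the equivariant map to $U_m^P$ coming from the (base-change-compatible, fibrewise surjective) evaluation morphism. Your write-up is if anything slightly more explicit than the paper's in spelling out the two composites and the 2-isomorphisms.
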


\begin{proof}
Fix a point $[\CF] \in \CCoh^P_m(X)$. By \Cref{reg-lemma}\ref{mreg-iii}, we have $P(m) = h^0(X,\CF(m))$. By \Cref{reg-lemma}\ref{mreg-ii}, we have a surjection $\OO_X(-m)^{\oplus P(m)} \onto \CF$, and a choice of isomorphism $\bfk^{\oplus P(m)} \cong \HH^0(X,\CF(m))$ allows one to realise $\CF$ as a quotient appearing as a point $[\theta]$ of $U^P_m$. The scheme $U^P_m$ is naturally acted on by $\GL_{P(m)}$. Moving the point $[\theta]$ within its $\GL_{P(m)}$-orbit corresponds precisely to moving $\CF$ into its isomorphism class. This establishes an identification of the two stacks at the level of $\bfk$-points.

This identification naturally extends to arbitrary families. To see this, let $T$ be a scheme, $\cF\in \Coh(X\times T)$ a $T$-flat family of $m$-regular coherent sheaves with Hilbert polynomial $P$, and let $\pr_T \colon X \times T \to T$ be the projection. There is a $\GL_{P(m)}$-torsor
\[
\Isom(\OO_T^{P(m)},\pr_{T\ast}\cF(m)) \to T
\]
equipped with a natural $\GL_{P(m)}$-invariant morphism into $U^P_m$ (this uses that any base change of the evaluation map $\pr_T^\ast \pr_{T\ast}\cF(m) \to \cF(m)$ is surjective), which defines a $T$-valued point of $[U_m^P / \GL_{P(m)}]$. This construction yields an equivalence (compatible with pullbacks) between the $T$-valued points of the stacks involved, proving the sought after isomorphism.
\end{proof}

We conclude that $\CCoh^P(X)$ has an exhaustion by open substacks which are global quotient stacks. We shall see in \Cref{prop:Coh^n-global-quotient} that, if $P\equiv n$ is a constant polynomial, then $\CCoh^n(X)$ is itself a global quotient.

\section{The stack of 0-dimensional sheaves}\label{sec:0-dim-sheaves}
In this section we study the open substack $\CCoh^n(X)\subset \CCoh(X)$ of 0-dimensional sheaves of fixed length $n\geq 0$. If $X$ is projective, our discussion in \Cref{sec:fixed-P} gives the definition of $\CCoh^n(X)$ as an open substack of $\CCoh(X)$ by setting $P=n$, i.e.~by taking a constant Hilbert polynomial. We nevertheless provide full details in the general setup in the next subsection. In particular, this sheds light on the importance of requiring the support to be proper.

We start with an easy but fundamental lemma, which shows that in the $0$-dimensional case, the stack $\CCoh^n(X)$ parametrizes all coherent sheaves on $X$ even if $X$ is not projective.

\begin{lemma}\label{basic-lemma}
Let $\CF$ be a coherent sheaf on a quasiprojective scheme $X$ such that $\Supp(\CF)$ is $0$-dimensional. Then \begin{enumerate}
\item  The set-theoretic support $\Supp_{\mathrm{set}}(\CF)$ is a finite subset of $X$.
\item Let $Z=\Supp(\CF)$. Then $Z$ is a finite disjoint union of closed subschemes
$\{\iota_j \colon Z_j \into X\}_{j\in J}$ with $Z_{j,\red}$ equal to a $\bfk$-point $p_j$ in $X$.
\item The support $\Supp(\CF)$ is proper, hence $\CF$ defines a $\bfk$-point of $\CCoh(X)$.
\item The natural morphism $\CF\to \bigoplus_{j \in J} \iota_{j*}\CF|_{Z_j}$ is an isomorphism.
    \end{enumerate}
\end{lemma}
\begin{proof}
We proceed step by step.
\begin{enumerate}
\item The set-theoretic support of $\CF$ is closed in $X$ and $0$-dimensional, hence finite since $X$ is noetherian.
\item The underlying set of $Z$ is a union of finitely many $\bfk$-points $\{p_j\}_{\{j\in J\}}$, we can take $Z_j \defeq Z\cap U_j$ where $U_j$ is the open subscheme of $X$ obtained by removing all $p_i$ with $i\in J\setminus\{j\}$.
\item A finite set of $\bfk$-points is proper.
\item This can be checked at stalks.
\end{enumerate}
The proof is complete.
\end{proof}

\begin{corollary}\label{cor:glob-gen}
 Let $\CF$ be a coherent sheaf on a quasiprojective scheme $X$ such that $\Supp(\CF)$ is $0$-dimensional. Then the evaluation map $\HH^0(X,\CF)\otimes_{\bfk}\OO_X\to \CF$ is surjective. 
\end{corollary}
\begin{proof}
    This is true for every coherent sheaf if $X$ is affine. By \Cref{basic-lemma}, it is enough to prove it for $\CF$ having set theoretic support in one point $p$. Let $U$ be an open affine in $X$ containing $p$. Then $\HH^0(X,\CF)\to \HH^0(U,\CF|_U)$ is an isomorphism, and we reduce to the affine case.
\end{proof}
\subsection{The definition of \texorpdfstring{$\CCoh^n(X)$}{}}
Let $X$ be a quasiprojective scheme, and let $n \geq 0$ be an integer. In this subsection we introduce $\CCoh^n(X)$, the stack parametrising sheaves $\CF \in \Coh(X)$ such that 
\[
\dim \CF=0, \quad \chi(\CF)=n. 
\]
The definition goes as follows. Let $T$ be a scheme, $\cF\in\CCoh(X)(T)$. The subset
\begin{equation}\label{eqn:open-dim}
\Set{t \in T | \dim \cF_t < i}\subset T    
\end{equation}
is open for all $i>0$, by upper-semicontinuity of the function
\begin{equation}\label{dim-supp-map}
\begin{tikzcd}
t \arrow[mapsto]{r} & \dim \cF_t.
\end{tikzcd}
\end{equation}
To check upper-semicontinuity of \eqref{dim-supp-map}, it is enough to observe that by \cite[\href{https://stacks.math.columbia.edu/tag/056H}{Tag 056H}]{stacks-project} one has, at the level of topological spaces, the identity
\[
\lvert \Supp(\cF) \cap (X\times \set{t}) \rvert = \lvert \Supp(\cF_t) \rvert,
\]
Knowing this, the semi-continuity statement follows from semi-continuity of fibre dimension along a closed morphism locally of finite type \cite[Cor.~14.113]{zbMATH05585185}, e.g.~a proper morphism such as $\Supp(\cF) \to T$.

It follows from openness of \eqref{eqn:open-dim} that there is an open substack
\[
\CCoh(X)_0\subset \CCoh(X)
\]
parametrising sheaves with 0-dimensional support (corresponding to $i=1$ in \eqref{eqn:open-dim}). In turn, the substack $\CCoh(X)_0$ admits a decomposition 
\[
\CCoh(X)_0 = \coprod_{n\geq 0} \CCoh^n(X)
\]
where $\CCoh^n(X)$ are open (and closed) substacks. Indeed, if $\cF\in\CCoh(X)_0(T)$, fixing $n$ amounts to fixing the rank of the locally free sheaf 
\[
\pr_{T\ast}\cF
\]
over $T$, and the rank is a locally constant function $T\to \BN$, formally defined by sending 
\[
\begin{tikzcd}
t \arrow[mapsto]{r} & \dim_{\kappa(t)} \pr_{T\ast}\cF \otimes_{\OO_T} \kappa(t),
\end{tikzcd}
\]
therefore it is continuous (having endowed $\BN$ with the discrete topology), and hence its fibres are open (and closed). The open and closed stratification by the rank of the pushforward defines the stacks $\CCoh^n(X)$, which are by construction open and closed in $\CCoh(X)_0$, and in particular open in $\CCoh(X)$.

\begin{example}\label{ex:few-points}
Let $X$ be a $\bfk$-variety. We have
\[
\CCoh^0(X)=\Spec \bfk, \qquad \CCoh^1(X) = X \times \mathrm B\BG_{m}.
\]
In particular, $\CCoh^1(X)$ is smooth as soon as $X$ is. If $n=2$, there are two possibilities for a closed point $[\CF] \in \CCoh^2(X)$. Namely, either
\begin{itemize}
    \item $\CF=\OO_Z$ for a length 2 closed subscheme $Z \into X$, or
    \item $\CF=\OO_p^{\oplus 2}$ for a closed point $p\in X$.
\end{itemize}
See \cite{MR18} for a geometric description of the stacks $\CCoh^n(\BA^2)_0$, for $n \leq 4$, in terms of the stratification by minimal number of generators for finite length $\bfk[x,y]$-modules.
\end{example}

\subsection{\texorpdfstring{Description of $\CCoh^n(X)$ as global quotient}{}}
In this subsection we realise $\CCoh^n(X)$ as a global quotient stack, whenever $X$ is a projective $\bfk$-variety. We need the Quot scheme of points as a key tool. If $\CE \in \Coh(X)$ is a coherent sheaf, the Quot scheme of points $\Quot_X(\CE,n)$ is the $\bfk$-scheme parametrising isomorphism classes $[\CE \onto \CF]$ of surjections in $\Coh(X)$, where $[\CF] \in \CCoh^n(X)$ (cf.~\Cref{sec:quot}).

\begin{lemma}\label{lemma:quot-to-coh}
Let $X$ be a scheme. Fix $\CE \in \Coh(X)$. For every $n\geq 0$, there is a morphism 
\begin{equation}\label{rho-morphism}
\begin{tikzcd}
\rho_{\CE,n} \colon \Quot_X(\CE,n) \arrow{r} & \CCoh^n(X)
\end{tikzcd}
\end{equation}
sending a point $[\CE \onto \CF]$ of the Quot scheme to the point $[\CF]$.

If $\CE$ is locally free of rank $r>0$, then $\rho_{\CE,n}$ is smooth of relative dimension $rn$.
\end{lemma}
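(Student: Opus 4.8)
The plan is to split the statement into two independent claims: first, that $\rho_{\CE,n}$ is a morphism of stacks (which makes sense over any base $X$), and second, that it is smooth of relative dimension $rn$ under the extra hypotheses. For the first claim, one needs to check that $\rho_{\CE,n}$ is well-defined on families: given a scheme $T$ and a $T$-point of $\Quot_X(\CE,n)$, i.e.\ a surjection $\CE_T \onto \cF$ in $\Coh(X\times T)$ with $\cF$ being $T$-flat and fibrewise of length $n$, the sheaf $\cF$ is by definition an object of $\CCoh^n(X)(T)$ (it is $T$-flat, its support is $0$-dimensional on fibres, and — crucially — its support is proper over $T$ because $X$ is projective, or more generally because the support of the quotient is closed in $X\times T$ and contained in $\Supp(\CE_T)$, whose fibres are $0$-dimensional, hence finite, hence proper). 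Pullback of surjections is again a surjection, so this assignment is compatible with base change and an isomorphism of quotients (same kernel) maps to an isomorphism of sheaves; this gives the morphism of stacks.

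For smoothness, the natural strategy is to pass to the global quotient chart. By \Cref{thm:global-quot-1} (taking $P\equiv n$, or via \Cref{prop:Coh^n-global-quotient} for the $0$-dimensional case), $\CCoh^n(X)$ is covered by atlases of the form $[A/\GL_N]$, and smoothness of $\rho_{\CE,n}$ can be checked after base change along the smooth atlas $A \to \CCoh^n(X)$. The fibre product $\Quot_X(\CE,n)\times_{\CCoh^n(X)} A$ parametrises pairs consisting of a family $\cF$ presented both as a quotient of $\CE$ and (via the chart) with a chosen basis of $\pr_{T\ast}\cF(m)$; projecting to $A$, the fibre over a fixed sheaf $\CF$ is the space of surjections $\CE \onto \CF$ modulo nothing (the automorphisms have been rigidified away by the chart). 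So it suffices to show: for $\CF$ of length $n$ and $\CE$ locally free of rank $r$, the functor on artinian thickenings parametrising surjections $\CE \onto \CF$ is smooth of dimension $rn$. Concretely, $\Hom(\CE,\CF) = \HH^0(X,\cHom(\CE,\CF)) = \HH^0(X,\CE^\vee\otimes\CF)$ has dimension $rn$ since $\CF$ has length $n$ and $\CE^\vee$ is locally free of rank $r$; surjectivity is an open condition (on the finite support of $\CF$, by Nakayama), so the surjections form an open subscheme of an affine space of dimension $rn$. The obstruction to lifting a surjection $\CE\onto\CF$ over a square-zero extension lies in $\Ext^1$-type groups, but since we are deforming the map $\theta$ with $\CE$ and $\CF$ both fixed, lifting $\theta$ is unobstructed (it is a lift of a map out of the projective — or at least locally free, hence locally projective — sheaf $\CE$; more precisely $\Hom(\CE,-)$ is exact because $\CE$ is locally free and $\CF$ is $0$-dimensional, so every partial quotient lifts and the lift is automatically still surjective by Nakayama). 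This is where reducedness of $X$ enters: it is needed (via \Cref{flatness-criteria}) to guarantee that the relevant pushforwards on the chart are locally free so that the chart construction behaves well, and to control the length being locally constant.

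The main obstacle I anticipate is the bookkeeping needed to identify the fibre of $\rho_{\CE,n}$ along the atlas cleanly — one must be careful that forming the fibre product with the chart really does rigidify the sheaf together with its presentation data, so that the residual fibre is exactly $\Hom(\CE,\CF)^{\mathrm{surj}}$ and not a quotient thereof. An alternative, perhaps cleaner route avoiding charts: verify the infinitesimal lifting criterion for smoothness directly at the level of stacks. Given a square-zero extension $T_0 \hookrightarrow T$ with ideal $I$, a $T$-point of $\CCoh^n(X)$, i.e.\ a family $\cF$ on $X\times T$, and a $T_0$-point of $\Quot_X(\CE,n)$ lifting the restriction $\cF_0$, one must lift the surjection $\CE_{T_0}\onto\cF_0$ to a surjection $\CE_T \onto \cF$; since $\cHom(\CE_T, -)$ is exact on the relevant sheaves ($\CE$ locally free, $\cF$ having proper $0$-dimensional support over $T$) and the obstruction to lifting the map lives in $\Hom(\CE_T, \cF\otimes I)$-type terms which vanish for the purpose of lifting a morphism (as opposed to a subsheaf), the lift exists, and it is automatically surjective because surjectivity can be checked on the closed fibre $T_0$ by Nakayama. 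Counting: the lifts form a torsor under $\Hom(\CE_{T_0},\cF_0\otimes_{\OO_{T}}I) \cong \HH^0(X, \CE^\vee\otimes\cF_0)\otimes I$, which has the expected rank $rn$ when $I = \OO_{T_0}$, giving relative dimension $rn$. I would then double-check formal smoothness is enough (the morphism is clearly locally of finite presentation since $\Quot_X(\CE,n)$ is), concluding smoothness.
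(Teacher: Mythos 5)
Your proposal is correct and follows essentially the same route as the paper: the morphism is obtained by checking that the universal (or any $T$-) quotient family defines an object of $\CCoh^n(X)(T)$ compatibly with base change, and smoothness is established by base-changing along a test family and identifying the result as an open subscheme (cut out by surjectivity, an open condition by Nakayama) of an affine bundle of rank $rn$ --- which is exactly the content of the reference the paper cites for this step, so your route (a) amounts to reproving that citation, and your infinitesimal-criterion route (b) is a reasonable alternative. One caveat: your parenthetical ``or more generally because the support of the quotient is \dots contained in $\Supp(\CE_T)$, whose fibres are $0$-dimensional, hence finite, hence proper'' is wrong as stated --- $\Supp(\CE_T)$ need not have $0$-dimensional fibres, and a closed subset of $X\times T$ with finite fibres over $T$ need not be proper over $T$ when $X$ is not proper (cf.\ the paper's own example of the diagonal in $\BA^1\times\BP^1$); the correct and sufficient reason, which you do give first, is that the Quot scheme is only defined here for projective $X$.
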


\begin{proof}
The morphism $\rho_{\CE,n}$ is constructed by sending the universal exact sequence
\[
0 \to \mathscr S \to \pr_X^\ast \CE \to \mathscr F \to 0,
\] 
where $\pr_X$ denotes the first projection $X \times \Quot_X(\CE,n)\to X$, to the object 
\[
\cF \in \CCoh^n(X)(\Quot_X(\CE,n)).
\]
Let us denote by $p$ a $\bfk$-valued point in $\Quot_X(\CE,n)$ defined by an exact sequence 
\[
0 \to S \to  \CE \to F \to 0.
\] 
The relative tangent and obstruction spaces to the morphism $\rho_{\CE,n}$ at $p$ are $\Hom(\CE,F)$ and $\Ext^1(\CE,F)$, respectively (since deformations of a surjective morphism of coherent sheaves are surjective). Since $\CE$ is locally free, $\Ext^1(\CE,F)=\HH^1(\CE^*\otimes F)$, which vanishes since $\Supp F$ has dimension $0$, thus the morphism $\rho_{\CE,n}$ is smooth at $p$. The relative tangent space is $\Hom(\CE,F)=\HH^0(X,\CE^*\otimes F)$, which is the direct sum of $\HH^0(X,\CE^*\otimes F|_{Z_j})$ using the notation in Lemma \ref{basic-lemma}. For each $j\in J$, there exists an open subset $U_j$ of $X$ containing $p_j$, and thus $Z_j$, such that $\CE|_{U_j}$ is trivial. Hence $\HH^0(X,\CE^*\otimes F|_{Z_j})$ is isomorphic to  $\HH^0(X,F^{\oplus r}|_{Z_j})$. Summing over $j\in J$, we get that $\Hom(\CE,F)$ is isomorphic to $\HH^0(X,F^{\oplus r})=\HH^0(X,F)^{\oplus r}$ hence has dimension $nr$. This is then the relative dimension of $\rho_{\CE,n}$.
\end{proof}
Consider now the Quot scheme
\[
\mathrm Q_n = \Quot_X(\OO_X^{\oplus n},n).
\]
By \Cref{lemma:quot-to-coh}, there is a smooth morphism
\[
\begin{tikzcd}
\mathrm Q_n \arrow{r}{\rho_{n}} & \CCoh^n(X)
\end{tikzcd}
\]
forgetting the surjection and only retaining the quotient sheaf. 
In fact, $\rho_n$ is surjective by \Cref{cor:glob-gen} (see also the proof of \cite[Lemma 3.2]{d-critical_quot}).

Let 
\[
\begin{tikzcd}
  \OO_{X \times \mathrm Q_n}^{\oplus n} \arrow[two heads]{r} & \cF
\end{tikzcd}
\]
be the universal quotient, and denote by $\pi\colon X \times \mathrm Q_n \to \mathrm Q_n$ the projection. Note that by \Cref{flatness-criteria} the pushforward $\pi_\ast \cF$ is locally free of rank $n$. The locus
\[
A_{X,n} \subset \mathrm Q_n 
\]
where the composition
\[
\begin{tikzcd}\OO_{\mathrm Q_n}^{\oplus n}\arrow{r} & \pi_\ast \OO_{X \times \mathrm Q_n}^{\oplus n}\arrow{r} & \pi_\ast \cF
\end{tikzcd}
\]
is an isomorphism is open in $Q_n$ and $\GL_{n}$-invariant.

Informally speaking, a point in $A_{X,n}$ corresponds to a linear isomorphism $\bfk^{\oplus n}\to \HH^0(X,\CF)$ with $\CF$ a $\bfk$-point of $\CCoh^n(X)$, hence to a choice of basis in $\HH^0(X,\CF)$; this implies that the natural morphism $A_{X,n}\to \CCoh^n(X)$ is a principal $\GL_n$-bundle whose fibres are all possible bases of $\HH^0(X,\CF)$.

This intuitive idea can be formalised as follows.

\begin{theorem}\label{prop:Coh^n-global-quotient}
Let $(X,\OO_X(1))$ be a projective variety, $n \geq 0$ an integer. There is an isomorphism of stacks
\[
\begin{tikzcd}
    {[}A_{X,n}/\GL_{n}{]} \arrow{r}{\sim} & \CCoh^n(X).
\end{tikzcd}
\]  
In particular, $\CCoh^n(X)$ is algebraic and a global quotient, with $A_{X,n}$ serving as a smooth atlas.
\end{theorem}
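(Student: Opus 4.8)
The plan is to deduce \Cref{prop:Coh^n-global-quotient} as a special case of \Cref{thm:global-quot-1} when $X$ is projective, checking that the constant Hilbert polynomial $P\equiv n$ places us in the $m=0$ instance of the exhaustion. First I would invoke \Cref{rmk:m-reg-0dim}: every $0$-dimensional sheaf is $m$-regular for \emph{every} $m\in\BN$, in particular for $m=0$. Hence the open substack $\CCoh^n_0(X)\subset\CCoh^n(X)$ of $0$-regular sheaves with Hilbert polynomial $P\equiv n$ is in fact \emph{all} of $\CCoh^n(X)$; there is no need to pass to larger $m$ in the exhaustion, and the filtration stabilises immediately at step $0$.

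Next I would identify the objects of \Cref{thm:global-quot-1} in this case. With $P\equiv n$ we have $P(0)=n$, so $\GL_{P(m)}=\GL_n$, and $\OO_X(-m)^{\oplus P(m)}=\OO_X^{\oplus n}$, so that $\Quot_X(\OO_X(-m)^{\oplus P(m)},P)=\Quot_X(\OO_X^{\oplus n},n)=\mathrm Q_n$. The open subscheme $U^P_m\subset\mathrm Q_n$ appearing in \Cref{thm:global-quot-1} parametrises surjections $\theta\colon\OO_X^{\oplus n}\onto\CF$ with $\CF$ $0$-regular (automatic) such that $\HH^0(X,\theta(0))=\HH^0(X,\theta)\colon\bfk^{\oplus n}=\HH^0(X,\OO_X^{\oplus n})\to\HH^0(X,\CF)$ is an isomorphism. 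I would then observe that for $0$-dimensional $\CF$ one has $\HH^0(X,\CF)=\pi_\ast\cF$ on fibres (and $\HH^i$ vanishes for $i>0$), so this condition is literally the condition cutting out $A_{X,n}\subset\mathrm Q_n$: namely that $\OO_{\mathrm Q_n}^{\oplus n}\to\pi_\ast\OO_{X\times\mathrm Q_n}^{\oplus n}\to\pi_\ast\cF$ be an isomorphism. Thus $U^P_0=A_{X,n}$ as open subschemes of $\mathrm Q_n$, and they carry the same $\GL_n$-action. Applying \Cref{thm:global-quot-1} with $m=0$ then gives the isomorphism $[A_{X,n}/\GL_n]\xrightarrow{\sim}\CCoh^n_0(X)=\CCoh^n(X)$, and the final sentence (algebraicity, $A_{X,n}$ a smooth atlas) is immediate since a global quotient of a scheme by $\GL_n$ is algebraic and the quotient map from the scheme is a smooth surjection.

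The only genuine point requiring care — and the step I expect to be the main obstacle, though a minor one — is matching the open condition defining $U^P_0$ with the one defining $A_{X,n}$ at the level of \emph{families}, not just points: one must check that on a test scheme $T$ with family $\cF$, the morphism $\pr_{T\ast}(\theta(0))$ being an isomorphism coincides with the cokernel/isomorphism condition used to define $A_{X,n}$ via $\pi_\ast$, which in turn uses that $\pr_{T\ast}\cF(0)=\pr_{T\ast}\cF$ is locally free of rank $n$ by \Cref{flatness-criteria} and commutes with base change by cohomology and base change (the higher direct images vanishing because $\cF$ has $0$-dimensional fibres). Once this compatibility is recorded, the proof is complete; alternatively one could also simply run the argument of \Cref{thm:global-quot-1} verbatim with $m=0$, observing that the $m$-regularity hypothesis is vacuous here, which bypasses any need to quote the general statement. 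I would present the former, shorter deduction.
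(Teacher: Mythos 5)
Your proposal is correct and follows essentially the same route as the paper: the paper's own proof simply observes that the condition defining $A_{X,n}$ is the isomorphism $\bfk^{\oplus n}\simto \HH^0(X,\CF)$ and then declares the argument identical to that of \Cref{thm:global-quot-1} with $P\equiv n$, using \Cref{rmk:m-reg-0dim} to dispose of the regularity hypothesis. Your extra care in matching $U^{P}_0$ with $A_{X,n}$ at the level of families via cohomology and base change is exactly the point the paper leaves implicit.
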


\begin{proof}
The condition defining $A_{X,n}$ defines an isomorphism $\bfk^{\oplus n} \simto \HH^0(X,\CF)$ for every $[\CF] \in \CCoh^n(X)$. The proof is then identical to the proof of \Cref{thm:global-quot-1}, with $P\equiv n$ (and noting that $0$-dimensional sheaves are $m$-regular for all $m$, cf.~\Cref{rmk:m-reg-0dim}).
\end{proof}

For instance, one has $A_{X,1} = \Quot_X(\OO_X,1)=X$, so that
\[
\CCoh^1(X) \cong [\Quot_X(\OO_X,1)/\BG_{m}] = [X/\BG_{m}]=X \times \mathrm B\BG_{m},
\]
where the last identity follows by triviality of the $\BG_m$-action. This recovers the $n=1$ part of \Cref{ex:few-points}.

\subsection{\texorpdfstring{Commuting matrices and the stack $\CCoh^n(\BA^d)$}{}}\label{sec:Coh(A^d)}
In this subsection we prove a folklore description of the stack of $0$-dimensional coherent sheaves of finite length on $\BA^d$, the $d$-dimensional affine space over a field $\bfk$. It is a global quotient stack, encoding linear algebraic data ($d$ pairwise commuting matrices up to conjugation). See also \cite[App.~A]{Joachim-Sivic}.

\begin{theorem}\label{thm:affine-coh^n}
Fix two integers $d>0$ and $n\geq 0$. There is a natural isomorphism
\[
\begin{tikzcd}
\CCoh^n(\Ab^d) \arrow{r}{\sim} & {[}C(n,d)/\GL_{n}{]},
\end{tikzcd}
\]
where $C(n,d)\subset \End_\bfk(\bfk^n)^{\oplus d}$ is the closed subscheme of $d$-tuples of pairwise commuting $n\times n$ matrices, 
\[
C(n,d)=
\Set{(A_1,\ldots,A_d)\in \End_\bfk(\bfk^n)^{\oplus d}\,|\, A_iA_j=A_jA_i \mbox{ for all }1 \leq i<j\leq d},
\]
and the linear group $\GL_{n}$ acts by simultaneous conjugation 
\[
g\cdot (A_1,\ldots,A_d)
=(gA_1g^{-1},\ldots, gA_dg^{-1}).
\]
In particular, 
\[
\CCoh^n(\BA^1) \cong [\End_\bfk(\bfk^n)/\GL_{n}]
\]
is a smooth algebraic stack.
\end{theorem}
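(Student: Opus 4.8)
The plan is to construct the isomorphism directly on $T$-valued points, using the affine chart $\BA^d = \Spec \bfk[x_1,\dots,x_d]$ to translate sheaf-theoretic data into linear-algebraic data, and to identify this with the groupoid of $\GL_n$-torsors equipped with an equivariant map to $C(n,d)$. Since $\BA^d$ is not projective, I cannot literally invoke \Cref{prop:Coh^n-global-quotient}; but I can instead argue as follows. First, observe that for $\CF\in\CCoh^n(\BA^d)(\bfk)$ the global sections $M\defeq \HH^0(\BA^d,\CF)$ form a $\bfk[x_1,\dots,x_d]$-module of dimension $n$ over $\bfk$ (finiteness of length because the support is $0$-dimensional and proper, hence a finite set of closed points), and the action of each coordinate $x_i$ on $M$ is an endomorphism $A_i\in\End_\bfk(\bfk^n)$ once a basis of $M$ is chosen; the relations $x_ix_j=x_jx_i$ force $A_iA_j=A_jA_i$, so we land in $C(n,d)$. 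Changing the chosen basis of $M$ is exactly the $\GL_n$-action by simultaneous conjugation. Conversely, a point of $C(n,d)$ makes $\bfk^n$ into a $\bfk[x_1,\dots,x_d]$-module, hence a $0$-dimensional sheaf on $\BA^d$ of length $n$ (its support is contained in the finitely many points determined by the joint spectrum of the $A_i$, automatically proper). This establishes the bijection on $\bfk$-points and shows the two stacks have the same coarse point-set, compatibly with isomorphisms.

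Next I upgrade this to families. Given a scheme $T$ and $\cF\in\CCoh^n(\BA^d)(T)$, the pushforward $\cM\defeq \pr_{T\ast}\cF$ is locally free of rank $n$ on $T$ (by \Cref{flatness-criteria}, since $\dim\cF_t=0$ for all $t$ so each $\cF_t$ is $m$-regular for all $m$, cf.~\Cref{rmk:m-reg-0dim}, and the Hilbert polynomial is the constant $n$); moreover, since $\BA^d$ is affine, $\cF$ is recovered from $\cM$ as the quasicoherent sheaf on $\BA^d\times T$ associated to the $\pr_T^\ast\OO_T[x_1,\dots,x_d]$-module $\cM$, the module structure encoding $d$ pairwise commuting $\OO_T$-linear endomorphisms of $\cM$. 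Thus the data of $\cF$ is equivalent to the data of a rank-$n$ vector bundle $\cM$ on $T$ together with $d$ commuting endomorphisms, and an isomorphism of families corresponds to an isomorphism of bundles intertwining the endomorphisms. On the other hand, a $T$-point of $[C(n,d)/\GL_n]$ is precisely a $\GL_n$-torsor $\mathcal P\to T$ together with a $\GL_n$-equivariant map $\mathcal P\to C(n,d)$; associating to $\mathcal P$ its rank-$n$ bundle $\cM=\mathcal P\times^{\GL_n}\bfk^n$ and reading off the commuting endomorphisms from the equivariant map to $C(n,d)$ gives exactly the same data. These assignments are mutually inverse and compatible with pullback along $S\to T$, yielding the desired isomorphism of stacks.

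For the last assertion, take $d=1$: then $C(n,1)=\End_\bfk(\bfk^n)\cong\BA^{n^2}$ is a smooth affine scheme (there is no commutation condition), and $[\BA^{n^2}/\GL_n]$ is an algebraic stack which is smooth because it is a quotient of a smooth scheme by a smooth affine group scheme. Hence $\CCoh^n(\BA^1)\cong[\End_\bfk(\bfk^n)/\GL_n]$ is a smooth algebraic stack.

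The main obstacle I anticipate is not the $\bfk$-point bijection, which is essentially the classical dictionary between finite-length modules over a polynomial ring and commuting matrices, but rather the careful verification that the equivalence is natural in $T$ and respects the stack (descent) structure: one must check that local freeness of $\pr_{T\ast}\cF$ holds uniformly (handled by \Cref{flatness-criteria}), that the reconstruction $\cF\leftrightarrow\cM$ is functorial and commutes with arbitrary base change (using affineness of $\BA^d$ and flat base change for $\pr_{T\ast}$), and that the resulting functor between groupoids is an equivalence — i.e.\ fully faithful and essentially surjective — at the level of each fibre category, not merely on isomorphism classes. Once these compatibilities are in place, the properness of the support (which on $\BA^d$ means the support is a finite set of closed points in each fibre) is automatic from $0$-dimensionality and finite length, so no separate argument is needed there.
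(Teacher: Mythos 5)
Your proposal is correct and follows essentially the same route as the paper: both identify a $T$-family of length-$n$ sheaves on $\BA^d$ with a rank-$n$ locally free sheaf on $T$ carrying $d$ pairwise commuting endomorphisms (via affineness of $\pr_T$ and local freeness of $\pr_{T\ast}\cF$), and match this with $\GL_n$-torsors equipped with equivariant maps to $C(n,d)$. The paper merely adds the quiver-theoretic gloss ($[M(n,d)/\GL_n]\cong\Rep_n(Q_d)$ with commutator relations imposed in the prequotient) before unwinding the torsor data locally, so the two arguments coincide in substance.
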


\begin{proof}
Let $M(n,d)$ denote the scheme $\BA^{dn^2}$, viewed as $d$-tuples of $n\times n$ matrices. Let $S=\bfk\langle x_1,\ldots,x_d\rangle$ be the free $\bfk$-algebra on $d$ generators. The stack quotient 
\[
[M(n,d)/\GL_n]
\]
is the stack of $n$-dimensional (left) $S$-modules. It contains, as a closed substack, the stack of $n$-dimensional $\bfk[x_1,\ldots,x_d]$-modules, namely $\CCoh^n(\BA^d)$. To see this, note that $[M(n,d)/\GL_n]$ can be interpreted as the stack $\Rep_n(Q_d)$ of $n$-dimensional representations of the $d$-loop quiver $Q_d$. One can also consider the quiver with relations $(Q_d,R_d)$, where the relations $R_d$ are the commutator relations $[x_i,x_j] = 0$ for $1\leq i<j\leq d$, where $x_1,\ldots,x_d$ are the $d$ loops. The corresponding stack $\Rep_n(Q_d,R_d)$ of $n$-dimensional representations is exactly the stack of $S/R_d$-modules of length $n$, namely $\CCoh^n(\BA^d)$. But the relations can be imposed in the prequotient, giving rise to the sought after isomorphism. 

Let us make the isomorphism explicit. Let $U$ be a $\bfk$-scheme. We start by the following observation: a morphism $U \to \CCoh^n(\BA^d)$, namely an object in the groupoid $\CCoh^n(\BA^d)(U)$, is the same as an $\OO_U$-algebra morphism $\OO_U[x_1,\ldots,x_d] \to \lEnd_{\OO_U}(\OO_U^{\oplus n})$.

Let us suppose we are given an object 
\begin{equation}\label{eqn:quotient-stack-obj}
\begin{tikzcd}
P \arrow{r}{f}\arrow[swap]{d}{t} & C(n,d) \\
U &
\end{tikzcd}
\end{equation}
of $[C(n,d) / \GL_{n}](U)$, which means that $t$ is a principal $\GL_{n}$-bundle and $f$ is a $\GL_{n}$-equivariant morphism. This is, locally on $U$, the data
\begin{equation}\label{local-gl-n-bundle}
\begin{tikzcd}
\Spec A \times \GL_{n} \arrow{r}{f_A}\arrow[swap]{d}{t_A} & C(n,d) \\
\Spec A &
\end{tikzcd}
\end{equation}
where $t_A$ is now the first projection, and $f_A$ satisfies $f_A(y,gh) = h\cdot f_A(y,g)$ for all $g,h \in \GL_{n}$ and $y \in \Spec A \subset U$. Each map $f_A$ determines an $A$-algebra homomorphism
\[
\begin{tikzcd}
\widetilde{f}_A \colon A[x_1,\ldots,x_d] \arrow{r} & \End_A(A^{\oplus n}).
\end{tikzcd}
\]
The fact that the diagrams \eqref{local-gl-n-bundle} glue to \eqref{eqn:quotient-stack-obj} corresponds to the fact these algebra morphisms glue to a global $\OO_U$-algebra morphism
\[
\OO_U[x_1,\ldots,x_d] \to \lEnd_{\OO_U}(\OO_U^{\oplus n}).
\]
This yields a $U$-valued point of $\CCoh^n(\BA^d)$.
It is clear by the construction that the procedure can be inverted, and that isomorphisms of objects as in \eqref{eqn:quotient-stack-obj} correspond to isomorphisms in the groupoid $\CCoh^n(\BA^d)(U)$.
\end{proof}

\begin{remark}
We would like to point out a connection with an area of current active research, namely the case $d=3$. This is somewhat special, in that the commuting variety $C(n,3)$ is a \emph{critical locus}. In this case, $\CCoh^n(\BA^3)$ is a `critical stack', namely the zero locus of an exact 1-form on the quotient stack $[\End_\bfk(\bfk^n)^{\oplus 3} / \GL_{n}]$. Indeed, the commuting variety $C(n,3)$ is a scheme-theoretic critical locus, i.e.~it can be written as $Z(\dd f_n) = \set{\dd f_n = 0}$ for a very explicit regular function $f_n$ on a smooth $\bfk$-variety $Y_n$. The $\bfk$-variety $Y_n$ is the affine space $\End_{\bfk}(\bfk^n)^{\oplus 3}$, and $f_n$ is the function sending $(A_1,A_2,A_3) \mapsto \Tr A_1[A_2,A_3]$. The key observation is that the simultaneous vanishing $[A_i,A_j]=0$ is equivalent to the single relation $\dd f_n = 0$. This fact is relevant, and ubiquitous, in Donaldson--Thomas theory. See \cite{d-critical_quot} for a careful analysis of the identification
\[
\CCoh^n(\BA^3) \cong  [Z(\dd f_n) / \GL_n],
\]
as well as a derived upgrade of this isomorphism and a comparison of the natural d-critical structures on each of these spaces.
\end{remark}

\begin{remark}
Let $Y \subset \BA^d$ be an affine variety, cut out by an ideal $J = (f_1,\ldots,f_r)$ in  $\bfk[x_1,\ldots,x_d]$. With minimal modifications, one can adapt the proof of \Cref{thm:affine-coh^n} to prove that
\[
\CCoh^n(Y) \cong [C(n,Y) / \GL_n],
\]
where $C(n,Y) \subset C(n,d)$ is the closed subscheme of $d$-tuples $(A_1,\ldots,A_d)$ of pairwise commuting $n\times n$ matrices cut out by the relations
\[
f_i(A_1,\ldots,A_d) = 0,\qquad i=1,\ldots,r.
\]
The $\GL_n$-action is, once again, simultaneous conjugation.
\end{remark}

\section{The naive Coh-to-Chow morphism}\label{sec:naive-coh-to-chow}

The most naive way to parametrise `$n$ points' on a quasiprojective scheme $X$ is by looking at the $n$-fold power $X^n$ of $X$. Getting rid of the ordering produces the \emph{$n$-th symmetric product}, namely the GIT quotient $\Sym^n(X) = X^n/\FS_n$ described in \Cref{sec:sym^n(X)}, where $\FS_n$ is the symmetric group on $n$ letters, acting on $X^n$ in the natural way. A sheaf $\CF\in\Coh(X)$ with finite support and $\chi(\CF)=n$ yields a natural element $\cycle(\CF) \in \Sym^n(X)$, namely 
\[
\cycle(\CF) = \sum_{x \in X}\length_{\OO_{X,x}}\CF_x\cdot x.
\]
It is a deep fact that the association $[\CF] \mapsto \cycle (\CF)$ is the set-theoretic part of a natural \emph{morphism}
\begin{equation}\label{supp}
\begin{tikzcd}
    \supp_X^n \colon \CCoh^n(X) \arrow{r} & \Sym^n(X).
\end{tikzcd}
\end{equation}
In this section we construct (assuming $\bfk$ has characteristic $0$) a `naive' version of $\supp_X^n$, namely the morphism
\begin{equation}\label{naive-supp-map}
\begin{tikzcd}
 \CCoh^n(X)_{\red} \arrow{r} & \Sym^n(X).
\end{tikzcd}
\end{equation}
We refer to Rydh \cite[IV, Prop.~7.8]{Rydh1} for full details on the construction of \eqref{supp} in the most general setup, and to our companion paper \cite{Coh(X)-motivic} for a simplified version of this construction, still carefully following Rydh's approach.

\subsection{The symmetric product}\label{sec:sym^n(X)}

Let $n\ge 1$ and let $X=\Spec R$ be an affine scheme; the product $X^n$ of $n$ copies of $X$ is also affine and naturally isomorphic to $\Spec R^{\otimes n}$. There is a natural action of the symmetric group $\FS_n$ on $X^n$ by permutation, and thus on $R^{\otimes n}$; the $n$-th symmetric product $\Sym^n(X)$ is defined to be the affine scheme $\Spec ((R^{\otimes n})^{S_n})$. The inclusion $(R^{\otimes n})^{S_n}\into R^{\otimes n}$ induces a natural morphism $X^n\to \Sym^nX$, which is a basic example of a GIT (universal good) quotient (cf.~\cite[Thm.~1.1]{Mumford-GIT}).

The construction commutes in a natural way with localisations, and thus for any scheme $X$, affine or not, we have a natural affine morphism $X^n\to \Sym^n(X)$ with the universal property that, for any scheme $S$, it induces a bijection from $\Mor(S,\Sym^nX)$ to the set of $\FS_n$-invariant morphisms $X^n\to S$. This morphism $X^n \to \Sym^n(X)$ is, once again, a universal good quotient in the sense of geometric invariant theory.

The construction also commutes with closed and open embeddings, that is if $X\to Y$ is a locally closed embedding, there is a natural cartesian diagram 
\[
\begin{tikzcd}
X^n\MySymb{dr}\arrow[hook]{r}\arrow{d} & Y^n \arrow{d} \\
\Sym^n(X)\arrow[hook]{r} & \Sym^n(Y)
\end{tikzcd}
\]
where $\Sym^n(X) \into \Sym^n(Y)$ is also a locally closed embedding.

\subsection{The morphism to Chow}\label{sec:Coh-Chow-naive}

Let $X$ be a scheme, and $[\CF]\in \CCoh^n(X)$. We can write 
\[
\CF = \bigoplus_{p\in \Supp(\CF)} \CF|_{U_p},
\]
where $U_p$ is any open subscheme of $X$ such that $U_p\cap \Supp(\CF)=\set{p}$. Note that the restriction map $\CF(U_p)\to \CF_p$ is an isomorphism.

We can associate to $\CF$ the element 
\[
\cycle(\CF) = \sum_{x \in \Supp(\CF)}\length_{\OO_{X,x}}\CF_x\cdot x \in \Sym^n(X).
\]
Note that, by \cite[\href{https://stacks.math.columbia.edu/tag/00IU}{Tag 00IU}]{stacks-project}, this equals the $0$-cycle
\[
\sum_{x\in\Supp(\CF)} \dim_{\bfk}\CF_x\cdot x.
\]
A key result is that the set-theoretic association $[\CF]\mapsto \cycle(\CF)$ is, in fact, a morphism of stacks.

\begin{theorem} There is a natural morphism $\supp_X^n\colon \CCoh^n(X)\to \Sym^nX$ which sends the point $[\CF]$ to $\cycle(\CF)$ and such that, for any open subset $U\subset X$, the induced diagram
\[
\begin{tikzcd}
    \CCoh^n(U)\ar[r]\ar[d]& \CCoh^n(X)\ar[d]\\
    \Sym^n(U)\ar[r] &\Sym^n(X)
\end{tikzcd}
\]
is cartesian.
\end{theorem}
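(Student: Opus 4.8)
The plan is to construct the morphism $\supp_X^n$ first in the projective case, where the global quotient description of \Cref{prop:Coh^n-global-quotient} is available, and then descend to the general quasiprojective case by gluing along an embedding into projective space. For $X$ projective, I would use the isomorphism $\CCoh^n(X) \cong [A_{X,n}/\GL_n]$ with $A_{X,n}$ an open subscheme of $\mathrm Q_n = \Quot_X(\OO_X^{\oplus n},n)$. It therefore suffices to build a $\GL_n$-invariant morphism $A_{X,n} \to \Sym^n(X)$; but $A_{X,n}$ is an open subscheme of a Quot scheme of points, which already carries the classical Quot-to-Chow (Grothendieck--Deligne, or in characteristic zero the symmetrisation of the support) morphism $\Quot_X(\OO_X^{\oplus n},n) \to \Sym^n(X)$ sending $[\OO_X^{\oplus n}\onto \CF]$ to $\cycle(\CF)$. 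This map is manifestly invariant under the $\GL_n$-action on $\mathrm Q_n$ (changing the surjection does not change $\CF$, hence not $\cycle(\CF)$), so it factors through the stack quotient, giving $\supp_X^n$ on $\CCoh^n(X)$, set-theoretically sending $[\CF]\mapsto \cycle(\CF)$ as required. Concretely, one can take the universal quotient $\cF$ on $X\times\mathrm Q_n$, whose $\pi_\ast$ is locally free of rank $n$ by \Cref{flatness-criteria}, and apply the construction of Rydh \cite[IV, Prop.~7.8]{Rydh1} (or, in characteristic $0$, the naive construction of \Cref{naive-supp-map}) to any family.

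For general quasiprojective $X$, I would fix an open embedding $j\colon X \into \overline X$ into a projective variety $\overline X$. By \Cref{cor:Coh(U)-is-open}, $\CCoh^n(X) = \CCoh^n(\overline X)_X \subset \CCoh^n(\overline X)$ is open, and by \Cref{lemma:supp-U} (applied with $U = X$) this open substack is precisely the preimage, under $\supp_{\overline X}^n$ constructed above, of the open subscheme $\Sym^n(X) \subset \Sym^n(\overline X)$ — indeed $\Supp(\CF) \subset X$ is exactly the condition that $\cycle(\CF)$ be supported on $X$, i.e.\ lie in $\Sym^n(X)$. Restricting $\supp_{\overline X}^n$ to this open substack yields the desired $\supp_X^n\colon \CCoh^n(X) \to \Sym^n(X)$. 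One should check this is independent of the chosen compactification $\overline X$; this follows because two compactifications are dominated by a third (or can be compared over $\overline X \times \overline X{}'$), and the support cycle is intrinsic to $\CF$, being computed Zariski-locally on $X$ via the decomposition $\CF = \bigoplus_{p\in\Supp\CF}\CF|_{U_p}$ displayed before the statement.

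Finally, for the cartesian square: given an open $U \subset X$, functoriality of the whole construction in open embeddings — which is built into Rydh's morphism and is also visible from the compatibility of $\Sym^n(-)$ with open embeddings recorded in \Cref{sec:sym^n(X)} — shows that $\supp_U^n$ is the restriction of $\supp_X^n$ along $\CCoh^n(U)\into\CCoh^n(X)$. So it remains to identify $\CCoh^n(U)$ with the fibre product $\CCoh^n(X) \times_{\Sym^n(X)} \Sym^n(U)$. The map from $\CCoh^n(U)$ to this fibre product is induced by $j_\ast\colon\CCoh^n(U)\to\CCoh^n(X)$ (from \Cref{cor:Coh(U)-is-open}) together with $\supp_U^n$; both $\CCoh^n(U)\into\CCoh^n(X)$ and $\Sym^n(U)\into\Sym^n(X)$ are open embeddings, so the fibre product is an open substack of $\CCoh^n(X)$, and by \Cref{lemma:supp-U} it is exactly the locus $\CCoh^n(X)_U$ of sheaves with support in $U$, which is $\CCoh^n(U)$ under $j_\ast$. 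The main obstacle I anticipate is not any of these gluing or openness checks but rather the genuine content hidden in the first paragraph: that the set-theoretic map $[\CF]\mapsto\cycle(\CF)$ really does come from a morphism of schemes out of the Quot scheme of points. I would not reprove this — it is exactly the classical Quot-to-Chow morphism (respectively Rydh's general construction) — and the point of the theorem here is the packaging of that fact at the level of the stack $\CCoh^n(X)$ together with the base-change compatibility along open embeddings.
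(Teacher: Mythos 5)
Your proposal is correct, and it is worth saying up front that the paper itself does not fully prove this theorem either: it reduces cartesianness to commutativity exactly as you do (the fibre product of two open embeddings is the open substack of sheaves supported in $U$, i.e.\ $\CCoh^n(X)_U=\CCoh^n(U)$ by \Cref{lemma:supp-U} and \Cref{cor:Coh(U)-is-open}), then reduces the construction to affine $X=\Spec R$ and, in characteristic zero, sketches a map on the generators $f^{\otimes n}$ of $(R^{\otimes n})^{\FS_n}$ via $f^{\otimes n}\mapsto \det q_\ast f$ --- but it explicitly declines to check that this extends to a ring homomorphism, so it only obtains the naive morphism on $\CCoh^n(X)_{\red}$ and defers the full statement to Rydh \cite[IV, Prop.~7.8]{Rydh1}. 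Your construction step is genuinely different: you present $\CCoh^n(\overline X)$ as $[A_{\overline X,n}/\GL_n]$ via \Cref{prop:Coh^n-global-quotient} and descend the Quot-to-Chow morphism from the atlas. That descent is legitimate because $\Sym^n(\overline X)$ is a scheme, hence a sheaf, so morphisms out of the quotient stack are the same as $\GL_n$-invariant morphisms out of the atlas --- but you should state the invariance scheme-theoretically (the two pullbacks of the universal quotient to $\GL_n\times A_{\overline X,n}$ are isomorphic \emph{families}, and Quot-to-Chow is functorial in the family), not merely on points. The trade-off between the two routes: yours produces the morphism on the full, possibly non-reduced stack rather than only on its reduction, and makes the reduction to a scheme-level statement cleaner; but the hard content you black-box (the Quot-to-Chow morphism on the non-reduced Quot scheme) is exactly the Grothendieck--Deligne/Rydh input the paper also defers to, so neither argument is more self-contained. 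Two small points to tidy: \Cref{prop:Coh^n-global-quotient} is stated for projective \emph{varieties}, so for a general quasiprojective scheme $X$ you should note that the global-quotient description holds verbatim for the projective closure (integrality is not used in its proof); and your independence-of-compactification check can be replaced by observing that the restricted morphism is characterised by the cartesian-square property over all affine opens of $X$, which is intrinsic.
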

A complete construction of $\supp_X^n$ can be found in Rydh's work Rydh \cite[IV, Prop.~7.8]{Rydh1}. We choose not to include it, but we make comments instead.

First of all, to show that the diagram above is cartesian, it is enough to show that it commutes, since by \Cref{cor:Coh(U)-is-open} we know that $\CCoh^n(U)\to \CCoh^n(X)$ is an open embedding with image 
\[
\CCoh^n(X)_U = \CCoh^n(X) \times_{\Sym^n(X)}\Sym^n(U),
\]
namely the open substack inverse image of $\Sym^n(U)$ for any morphism $\CCoh^n(X)\to\Sym^n(X)$ mapping $[\CF]$ to $\cycle(\CF)$.

It is thus enough to construct the support morphism when $X$ is an affine scheme and show that it commutes with localisation. If $X=\Spec R$ is affine, so is $\Sym^n(X)\defeq\Spec A$ with $A=(R^{\otimes n})^{\FS_n}$; we also note that, if $\bfk$ has characteristic $0$, then  $A$ is generated by elements of the form $f^{\otimes n}$ for $f\in R$. 

To construct $\supp_X^n$, one would need to construct, for any scheme $T$ and for any family $\cF\in\CCoh^n(X)(T)$, a morphism $T\to \Sym^n(X)$ compatible with pullback. Since $\Sym^n(X)$ is affine, this is equivalent to giving a homomorphism of $\bfk$-algebras $A\to \OO_T(T)$.

We can define such a map directly on the generators by mapping 
\[
\begin{tikzcd}
    f^{\otimes n}\arrow[mapsto]{r} & \det q_*f.
\end{tikzcd}
\]
In a little more detail, we view multiplication by $f$ as an endomorphism $f\colon \cF\to\cF$. Pushing forward via the projection $q\colon X\times T\to T$, we get an induced homomorphism $q_*f\colon q_*\cF\to q_*\cF$ of locally free sheaves of rank $n$ on $T$, and taking determinants gives us a homomorphism  
\[
\begin{tikzcd}
\det q_*f\colon \det q_*\cF\arrow{r} & \det q_*\cF.
\end{tikzcd}
\]
Finally, since  $\cL\defeq\det q_*\cF$ is an invertible sheaf, we have 
\[
\Hom(\cL,\cL)=\Hom(\OO_T,\cL^*\otimes\cL)=\OO_T(T).
\]

This construction clearly commutes with base change, since pushforward to the base of the family does, and taking determinants commutes with pullback. This would complete the proof if we could show that there is a (necessarily unique) $\bfk$-algebra homomorphism $A\to \OO_T(T)$ with the given value on every $f^{\otimes n}$, but we will not do that.

Instead, we will note  that this does define the correct map when $T=\Spec \bfk$; this in turn yields the definition of the  morphism $\CCoh^n(X)_{\red} \to \Sym(X)$, announced in \eqref{naive-supp-map}, with the required property. This is already a very useful result, since it means that any stratification (decomposition in locally closed subsets, typically induced by partitions --- as in \Cref{sec:stratification} below --- or by a stratification in $X$) on $\Sym^nX$ induces one on $\CCoh^n(X)$, and thus on every family of coherent sheaves of length $n$, compatible with base change.

In particular, we get induced morphisms $\Quot_X(\CE,n)_{\red}\to \Sym^n(X)$ for any fixed $\CE\in \Coh(X)$, a special case of so called Quot-to-Chow morphisms.

\subsection{Coh-to-Chow for the affine space}\label{sec:coh-to-chow-affine-and-punctual}
The Coh-to-Chow morphism
\[
\CCoh^n(\BA^d)_{\red} \to \Sym^n(\BA^d)
\]
for $X=\BA^d$ is particularly easy to describe, due to the description of \Cref{thm:affine-coh^n}.
Explicitly, on geometric points $\CF \in \CCoh^n(\BA^d)(\bfk)$, the map does the following. The $\GL_{n}$-orbit in $[C(n,d)/\GL_{n}](\bfk)$ corresponding to $\CF$ contains a representative $(A^1,\ldots,A^d)$ such that $A^1,\ldots,A^d$ are \emph{all} upper triangular (because they pairwise commute). In particular, each $A^i$ displays its own eigenvalues on the diagonal. We have 
\[
\cycle(\CF) = \sum_{1\leq k\leq n} (A^1_{kk},\ldots,A^d_{kk}) \in \Sym^n(\BA^d).
\]
In other words, the points appearing in the support of $\CF$ can be read off from the diagonals of $A^1,\ldots,A^d$ (i.e.~they are the eigenvalues of the matrices). In particular, sheaves belonging to the punctual stack $\CCoh^n(\BA^d)_0$ correspond to tuples $(A^1,\ldots,A^d)$ as above, which are not only upper triangular, but have zeros on their diagonal. For this reason, one can present the \emph{punctual stack}
\[
\CCoh^n(\BA^d)_0=(\supp^n_{\BA^d})^{-1}(n\cdot 0),
\]
parametrising sheaves entirely supported on the origin $0 \in \BA^d$, as a global quotient stack
\[
\CCoh^n(\BA^d)_0 = [C(n,d)^{\mathsf{nilp}} / \GL_{n}],
\]
where $C(n,d)^{\mathsf{nilp}} \subset C(n,d)\subset \BA^{dn^2}$ is the closed subscheme of (pairwise commuting) nilpotent matrices.

\begin{remark}
Motzkin and Taussky proved that $C(n,2)$ is irreducible of dimension $n^2+n$ for all $n\geq 1$ \cite{MTcm}. However, $C(n,d)$ is reducible if $n,d>3$ \cite{Gerst61,Guralnick}. Baranovsky \cite{Bara1} and Basili \cite{Basi1} proved that $C(n,2)^{\mathsf{nilp}}$ is irreducible. See \cite{NS14} and the references therein for more on the state of the art on irreducibility of $C(n,d)$ and $C(n,d)^{\mathsf{nilp}}$, and for the proof of the reducibility of $C(n,d)^{\mathsf{nilp}}$ for $n,d>3$, and of the irreducibility of $C(n,3)^{\mathsf{nilp}}$ for $n\leq 6$.
\end{remark}

\section{Behaviour under \'etale maps}\label{sec:etale-behaviour}
In this section we establish a result (cf.~\Cref{lemma:etale1}), stating that the pushforward along an \'etale morphism of varieties $Y \to X$ yields an \emph{\'etale} morphism from an open substack of $\CCoh^n(Y)$ down to $\CCoh^n(X)$. This is extremely useful: it says, for instance, that if $X$ is smooth of dimension $d$, then $\CCoh^n(X)$ looks \'etale-locally like $\CCoh^n(\BA^d)$. In particular the (types of) singularities are preserved. See \Cref{thm:smoothness-of-Coh} and  \Cref{prop:local-triviality} for a couple of statements along these lines, and see \cite{Coh(X)-motivic} for a detailed proof of the isomorphism 
\[
\CCoh^n(X)_p \cong \CCoh^n(\Spec \widehat{\OO}_{X,p}),
\]
where $\CCoh^n(X)_p$ is the preimage of the $0$-cycle $n\cdot p$ along $\supp_X^n$, for a given closed point $p \in X$.

\subsection{Symmetric products and \'etale maps}
Let $f\colon Y\to X$ be an \'etale morphism of schemes; then $f$ induces (\'etale) morphisms $f^n\colon Y^n\to X^n$. We define $Y^n_{\inj}\subset Y^n$ to be the locus of points $(p_1,\ldots,p_n)\in Y^n$ such that, for $i,j\in\set{1,\ldots,n}$ we have $f(p_i)=f(p_j)$ if and only if $p_i=p_j$.

\begin{lemma}
    If $X$ is separated, then $Y^2_{\inj}$ is open in $Y^2$.
\end{lemma}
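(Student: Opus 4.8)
The statement to prove is that $Y^2_{\inj}\subset Y^2$ is open whenever $X$ is separated and $f\colon Y\to X$ is \'etale. The plan is to describe the complement $Y^2\setminus Y^2_{\inj}$ explicitly and show it is closed. A pair $(p_1,p_2)$ fails to be in $Y^2_{\inj}$ precisely when $f(p_1)=f(p_2)$ but $p_1\ne p_2$; equivalently, writing $\Delta_{Y/X}\subset Y\times_X Y$ for the relative diagonal and $\Delta_Y\subset Y\times Y$ for the absolute diagonal, the bad locus is the image in $Y^2=Y\times Y$ of the closed subscheme $\Delta_{Y/X}\setminus\Delta_Y$. So the strategy is: (i) identify $Y\times_X Y$ as a closed subscheme of $Y\times Y$ using separatedness of $X$; (ii) observe that since $f$ is \'etale, the relative diagonal $\Delta_{Y/X}$ is open \emph{and} closed in $Y\times_X Y$, hence $\Delta_{Y/X}\setminus \Delta_Y$ is again a nice (locally closed, in fact closed inside $Y\times_X Y$ away from $\Delta_Y$) subscheme; (iii) conclude that its image in $Y^2$ is closed, so its complement $Y^2_{\inj}$ is open.

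\textbf{Key steps.} First I would recall that $f$ \'etale implies $f$ is unramified, so the diagonal morphism $\Delta_{Y/X}\colon Y\to Y\times_X Y$ is an open immersion; since $f$ is also flat (indeed \'etale), $\Delta_{Y/X}(Y)$ is moreover closed in $Y\times_X Y$, i.e.\ $Y$ is an open-and-closed subscheme of $Y\times_X Y$. Hence $Z\defeq (Y\times_X Y)\setminus \Delta_{Y/X}(Y)$ is a closed (and open) subscheme of $Y\times_X Y$. Second, because $X$ is separated, the map $Y\times_X Y\to Y\times Y$ (base change of $X\to X\times X$ along $Y\times Y\to X\times X$, i.e.\ the fibre product over $\Spec\bfk$) is a closed immersion; write $W\subset Y^2$ for the closed subscheme it identifies with $Y\times_X Y$. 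Then $Z$, being closed in $Y\times_X Y\cong W$ and $W$ closed in $Y^2$, is a closed subscheme of $Y^2$. Third, on underlying topological spaces we have exactly
\[
Y^2\setminus Y^2_{\inj} = \Set{(p_1,p_2)\in Y^2 | f(p_1)=f(p_2),\ p_1\ne p_2} = \lvert Z\rvert,
\]
which is closed in $Y^2$; therefore $Y^2_{\inj}$ is open. I would spell out the middle equality by noting that $(p_1,p_2)\in \lvert Y\times_X Y\rvert$ iff $f(p_1)=f(p_2)$, and that such a point lies in the open-closed piece $\Delta_{Y/X}(Y)$ iff $p_1=p_2$ (using that $\Delta_{Y/X}$ is an open immersion, so its image is the locus where the two projections agree).

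\textbf{Main obstacle.} The only genuinely substantive input is that the relative diagonal of an \'etale (or merely unramified) morphism is open, and is closed when the morphism is in addition flat; this is standard (\cite[\href{https://stacks.math.columbia.edu/tag/02GE}{Tag 02GE}]{stacks-project} and the discussion around \'etale morphisms), and reduces the whole statement to bookkeeping with closed immersions. The separatedness of $X$ enters \emph{only} to pass from $Y\times_X Y$ to a closed subscheme of $Y\times Y$ — without it, $Y\times_X Y\to Y^2$ need not be a closed immersion and the argument breaks, which is exactly why the hypothesis appears. So I expect no real difficulty; the proof is short, and the subtlety to watch is simply that ``injective on the pair $(p_1,p_2)$'' must be phrased so as to also rule out $p_1\ne p_2$ mapping to the same point, which is precisely the complement of the diagonal inside the fibre product.
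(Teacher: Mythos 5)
Your proof is correct and follows essentially the same route as the paper's: identify the complement of $Y^2_{\inj}$ as $(Y\times_X Y)\setminus\Delta_{Y/X}(Y)$, use separatedness of $X$ to realise $Y\times_X Y$ as closed in $Y\times Y$, and use that the relative diagonal of an \'etale (unramified) morphism is an open immersion, so its complement is closed. One small inaccuracy, harmless because you never actually use it: closedness of $\Delta_{Y/X}(Y)$ in $Y\times_X Y$ would follow from separatedness of $f$, not from flatness --- but your argument only needs openness of the diagonal.
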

\begin{proof}
Consider the commutative diagram with cartesian square
\[
\begin{tikzcd}
Y\arrow{dr}[description]{i}\arrow[bend left=20]{drr}{f} \arrow[swap,bend right=20]{ddr}{\Delta_Y} & & \\
& Y\times_{X^2}X\MySymb{dr}\arrow{r}{g}\arrow[hook]{d} & X\arrow[hook]{d}{\Delta_X} \\
& Y\times Y\arrow[swap]{r}{f^2} & X \times X
\end{tikzcd}
\]
The complement of $Y^2_{\inj}$ in $Y\times Y$ is $Y\times_{X^2}X\setminus \Delta_Y$; since $X$ is separated, $Y\times_{X^2}X$ is closed in $Y\times Y$. The morphism $g$ is \'etale since it is a base change of $f^2$, so since $f$ is \'etale also $i$ is \'etale, hence $\Delta_Y\subset Y\times_{X^2}X$ is open and $Y\times_{X^2}X\setminus \Delta_Y$ is closed in $Y\times_{X^2}X$, which concludes the proof.
\end{proof}

\begin{prop}
If $X$ is separated, then for any $n\ge 1$, the subset $Y^n_{\inj}$ is open in $Y^n$.
\end{prop}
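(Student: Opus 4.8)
The plan is to reduce the general case to the $n=2$ case already established, by expressing $Y^n_{\inj}$ in terms of finitely many preimages of the open set $Y^2_{\inj}$ under the coordinate-pair projections $Y^n \to Y^2$. Indeed, a point $(p_1,\ldots,p_n)\in Y^n$ lies in $Y^n_{\inj}$ precisely when, for every pair $i<j$, the pair $(p_i,p_j)$ lies in $Y^2_{\inj}$: the condition defining $Y^n_{\inj}$ (that $f(p_i)=f(p_j) \iff p_i=p_j$ for all $i,j$) is the conjunction over all pairs of the corresponding bi-pointed condition. Note the diagonal pairs $i=j$ contribute nothing, and the pairs with $j<i$ give the same condition as $i<j$ by symmetry of the equality.

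First I would fix, for each pair $1\le i<j\le n$, the projection morphism $\pi_{ij}\colon Y^n \to Y^2$ onto the $i$-th and $j$-th factors; this is a morphism of schemes (it is even smooth, being a product of identities and structure maps, though we only need continuity). Then I would prove the set-theoretic identity
\[
Y^n_{\inj} = \bigcap_{1\le i<j\le n} \pi_{ij}^{-1}\!\left(Y^2_{\inj}\right),
\]
which is a direct unwinding of the definitions of $Y^n_{\inj}$ and $Y^2_{\inj}$ on $\bfk$-points (and, since we work with schemes of finite type over an algebraically closed field and these loci are defined by their closed points, this suffices; alternatively one checks it on all points). The containment ``$\subseteq$'' is immediate; for ``$\supseteq$'', given a point in the right-hand intersection, injectivity of $f^n$ at that point follows pairwise.

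Finally, since $X$ is separated, the previous lemma gives that $Y^2_{\inj}$ is open in $Y^2$, each $\pi_{ij}$ is continuous, so each $\pi_{ij}^{-1}(Y^2_{\inj})$ is open in $Y^n$, and a \emph{finite} intersection of opens is open; hence $Y^n_{\inj}$ is open in $Y^n$. I do not anticipate a serious obstacle here: the only mild subtlety is being careful that the pairwise criterion really does capture the full injectivity condition (it does, since injectivity of a set map is equivalent to injectivity on every two-element subset of the domain), and that we are content with a statement about underlying topological spaces, so scheme-structure issues do not intervene.
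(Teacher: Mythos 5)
Your proof is correct and follows essentially the same route as the paper: both express $Y^n_{\inj}$ as the finite intersection $\bigcap_{i<j}\pi_{ij}^{-1}(Y^2_{\inj})$ of preimages of the open set $Y^2_{\inj}$ under the coordinate-pair projections and conclude by continuity. Your additional remarks on why the pairwise criterion captures full injectivity are fine but not needed beyond what the paper records.
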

\begin{proof}
    For $n=1$ there is nothing to prove, since $Y^1_{\inj}=Y^1=Y$. The case $n=2$ is the previous lemma. For $n>2$, given $1\le i<j\le n$, consider the projection $\pi_{ij}\defeq(\pi_i,\pi_j)\colon Y^n\to Y^2$; the subset $Y^n_{\inj}$ is the intersection, over every possible $(i,j)$, of $\pi_{ij}^{-1}Y^2_{\inj}$, and is therefore open in $Y^n$.
\end{proof}

\begin{corollary}
The image  $\Sym^n(Y)_{\inj}$  of $Y^n_{\inj}$ in $\Sym^n(Y)$ is open.
\end{corollary}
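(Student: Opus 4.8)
The plan is to deduce this from the fact, recorded in \Cref{sec:sym^n(X)}, that the morphism $\pi\colon Y^n\to\Sym^n(Y)$ is a universal good quotient for the $\FS_n$-action. In particular $\pi$ is surjective and submersive (a topological quotient map); alternatively, since $\FS_n$ is a finite group, $\pi$ is a finite morphism, hence closed, and a closed surjective continuous map is submersive. Either way, a subset $V\subset\Sym^n(Y)$ is open if and only if $\pi^{-1}(V)$ is open in $Y^n$.

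First I would observe that $Y^n_{\inj}$ is $\FS_n$-invariant: the defining condition, namely that for all $i,j\in\{1,\dots,n\}$ one has $f(p_i)=f(p_j)$ precisely when $p_i=p_j$, is manifestly symmetric under permutations of the tuple $(p_1,\dots,p_n)$. Consequently
\[
\pi^{-1}\bigl(\pi(Y^n_{\inj})\bigr)=\bigcup_{\sigma\in\FS_n}\sigma\cdot Y^n_{\inj}=Y^n_{\inj}.
\]
By the preceding Proposition (which uses the standing hypothesis that $X$ is separated), the set $Y^n_{\inj}$ is open in $Y^n$. Since by definition $\Sym^n(Y)_{\inj}=\pi(Y^n_{\inj})$ and its preimage under $\pi$ is the open set $Y^n_{\inj}$, submersiveness of $\pi$ yields that $\Sym^n(Y)_{\inj}$ is open in $\Sym^n(Y)$.

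The argument is essentially formal, so there is no real obstacle; the one point I would make sure to justify explicitly is that the good quotient $\pi$ is a topological quotient map, which is either cited from general GIT or, more elementarily in this situation, deduced from $\pi$ being finite (hence closed) and surjective.
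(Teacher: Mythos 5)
Your argument is correct and is exactly the standard one the paper implicitly intends (the corollary is stated without proof there): $Y^n_{\inj}$ is $\FS_n$-invariant and open, and the finite surjective quotient map $Y^n\to\Sym^n(Y)$ is closed, hence submersive, so the image of a saturated open set is open. The one point you flag — that the good quotient is a topological quotient map — is indeed the only thing needing justification, and your finite-hence-closed argument handles it cleanly.
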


\subsection{Finite coherent sheaves and \'etale maps}

We start with the following lemma.

\begin{lemma}\label{lemma:etale1}
Let $f\colon Y \to X$ be an \'etale map of $\bfk$-varieties. Then the direct image of coherent sheaves induces an \'etale morphism
\[
\begin{tikzcd}[row sep=tiny]
V_{\inj} \arrow{r}{f_\ast} & \CCoh^n(X)\\
{[}\CE{]} \arrow[mapsto]{r} & {[}f_\ast \CE{]}
\end{tikzcd}
\]
where $V_{\inj}\subset \CCoh^n(Y)$ is the open substack of sheaves $\CE$ on $Y$ such that $f$ is injective on $\Supp_{\mathrm{set}}(\CE)\subset Y$.
\end{lemma}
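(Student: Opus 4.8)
The strategy is to show that $f_\ast$ is well-defined (i.e.\ lands in $\CCoh^n(X)$), that it is representable by schemes, and finally that it is étale. For well-definedness, the key point is that if $f$ is injective on $\Supp_{\mathrm{set}}(\CE)$, then the restriction $f|_{\Supp_{\mathrm{set}}(\CE)}$ is a \emph{finite} morphism (it is quasifinite, by injectivity, and proper, because $\Supp_{\mathrm{set}}(\CE)$ is $0$-dimensional and $X$ separated), so $f_\ast\CE$ is coherent with $0$-dimensional support, and $\chi(f_\ast\CE)=\chi(\CE)=n$ since $f$ restricted to the support is finite and, being étale there too, does not change lengths of stalks: $\length_{\OO_{X,f(p)}}(f_\ast\CE)_{f(p)} = \sum_{p' \in f^{-1}(f(p))\cap\Supp_{\mathrm{set}}\CE}\length_{\OO_{X',p'}}\CE_{p'}$, and the $\FS$-invariant count of lengths is preserved. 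This construction is manifestly compatible with base change, so using \Cref{embed}-type arguments (done here over a test scheme $T$ rather than $\Spec\bfk$: one restricts a family $\CG \in V_{\inj}(T)$ to its support, observes that $\Supp(\CG) \to X\times T$ is finite onto its scheme-theoretic image because it is proper and quasifinite, the latter using that the condition defining $V_{\inj}$ is fibrewise), one gets a morphism of stacks $f_\ast\colon V_{\inj}\to\CCoh^n(X)$.

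To prove étaleness, the cleanest route is the infinitesimal criterion: $f_\ast$ is étale iff it is locally of finite presentation and formally étale, i.e.\ for every square-zero extension $T_0 \into T$ of affine schemes, the natural map from lifts over $T$ of a given $T_0$-point to the obvious compatible data is a bijection. Since everything is local on the base and the support is $0$-dimensional, one reduces to the following local assertion: if $\CE_0$ is a finite-length sheaf on $X'$ supported (set-theoretically) at finitely many points $p'_1,\dots,p'_k$ on which $f$ is injective, and $\CG$ is a flat deformation of $f_\ast\CE_0$ over a square-zero extension $T$, then $\CG$ comes from a unique (up to unique isomorphism) flat deformation of $\CE_0$. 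This should follow because the deformation theory of a finite-length sheaf is entirely controlled by its completed stalks, and $f$ induces isomorphisms $\widehat{\OO}_{X,f(p'_i)} \simto \widehat{\OO}_{X',p'_i}$ (as $f$ is étale), so the category of finite-length $\widehat{\OO}_{X,f(p'_i)}$-modules deforming over $T$ is literally equivalent to the one for $\widehat{\OO}_{X',p'_i}$. One also needs to check there are no extra automorphisms or deformations coming from the fact that distinct $p'_i$ over the same point of $X$ could "collide" — but this is exactly excluded by working inside $V_{\inj}$, where the support map is injective. Local finite presentation is routine (both stacks are locally of finite presentation over $\bfk$ and $f_\ast$ is cut out by finite-type data).

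An alternative, perhaps slicker, implementation: use the global quotient presentations. Pick $X$ (hence $X'$) affine (the statement is local on $X$, and étaleness can be checked after passing to an open cover of $X$ downstairs and the induced cover of $X'$; one must be slightly careful that $V_{\inj}$ behaves well under this, which it does since the condition is about fibres). Then by \Cref{thm:affine-coh^n} (in its relative form for affine varieties, stated in the remark after it) both $\CCoh^n(X)$ and $\CCoh^n(X')$ are quotients $[C(n,X)/\GL_n]$, $[C(n,X')/\GL_n]$ of commuting-matrix schemes, and $V_{\inj}$ is the quotient of the open $\GL_n$-invariant locus $W \subset C(n,X')$ where the (common, since commuting) generalised eigenvalues — i.e.\ the points of the support — map injectively under $f$. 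One then checks that the induced $\GL_n$-equivariant map $W \to C(n,X)$ is étale, which by descent gives étaleness of $f_\ast$. On $W$, a tuple decomposes $\bfk^n$ into generalised eigenspaces attached to the distinct support points $p'_i \in X'$; pushing forward just reinterprets the same matrices as acting via $\widehat{\OO}_{X,f(p'_i)}$-module structures, and étaleness at such a tuple amounts to the isomorphism $\widehat{\OO}_{X',p'_i} \cong \widehat{\OO}_{X,f(p'_i)}$ again. This reduces the whole thing to the étaleness of $f$ itself, repackaged.

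\textbf{Main obstacle.} The substantive difficulty is the étaleness, specifically the surjectivity of the map on lifts in the infinitesimal criterion (equivalently, that $W \to C(n,X)$ hits an open neighbourhood and not just a formal one): one must argue that a \emph{global} flat family $\CG$ over $T$ deforming $f_\ast\CE_0$ actually descends to a family on $X'\times T$, not merely that the completed local pieces do. This is a gluing/algebraization point — one needs that the canonical decomposition of $f_\ast\CE_0$ along the finitely many points of $f(\Supp_{\mathrm{set}}\CE_0)$ propagates over the infinitesimal base $T$ (true because idempotents lift along nilpotent extensions), and then that each local factor, living over an étale neighbourhood of a point of $X$ that splits off a corresponding étale neighbourhood in $X'$, can be pulled back along the local inverse of $f$. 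Handling the bookkeeping of these local inverses coherently across $T$, and checking that the resulting $X'$-family still lies in $V_{\inj}$ (support injectivity is an open condition, stable under the square-zero extension), is where the care is needed; everything else is formal or already established in \Cref{embed} and \Cref{thm:affine-coh^n}.
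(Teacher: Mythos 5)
Your proposal is essentially correct but takes a genuinely different route from the paper. The paper's proof is short and structural: it uses that \'etaleness is fppf-local on the target, base-changes $f_\ast$ along the smooth surjective atlas $\Quot_X(\OO_X^{\oplus n},n)\to\CCoh^n(X)$ of \Cref{lemma:quot-to-coh}, identifies the fibre product with an open subscheme of $\Quot_{X'}(\OO_{X'}^{\oplus n},n)$, and then cites an external result (\cite[Prop.~A.3]{BR18}) for the \'etaleness of the resulting map of Quot schemes. You instead prove the \'etaleness directly, via the infinitesimal criterion (or equivalently via the commuting-variety presentation), reducing everything to the isomorphism $\widehat\OO_{X,f(p')}\simeq\widehat\OO_{X',p'}$ and to the unique lifting of the decomposition of $f_\ast\CE_0$ into local summands across a nilpotent thickening. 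What the paper's approach buys is brevity and a clean separation of the stacky bookkeeping from the scheme-level deformation theory (which lives in the cited reference); what yours buys is self-containedness and a transparent explanation of \emph{why} the map is \'etale. You also correctly identify the genuine crux, namely that a flat deformation of $f_\ast\CE_0$ over a square-zero extension $T_0\into T$ must be shown to descend to $X'\times T$ globally, not just formally at each point; your idempotent-lifting/topological-invariance argument (the finite $T$-scheme $\Supp(\cG)$ has the same underlying space as $\Supp(\cG_0)$, so the open-and-closed piece of $\Supp(\cG)\times_{X\times T}(X'\times T)$ lying over $\Supp(\cE_0)$ maps isomorphically to $\Supp(\cG)$) does close this. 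Two points you should make explicit if you write this up: (i) the infinitesimal criterion must be applied for arbitrary affine square-zero extensions, not just Artinian local ones, which is exactly why the gluing step above is needed rather than a purely stalkwise argument; and (ii) the criterion in the form ``unique lift up to unique isomorphism'' implicitly uses that $f_\ast$ is representable (equivalently, fully faithful on the relevant subcategory of sheaves), which holds on $V_{\inj}$ but deserves a sentence.
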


\begin{proof}
The pushforward morphism $\CCoh(Y) \to \CCoh(X)$ exists globally by \cite[\href{https://stacks.math.columbia.edu/tag/0DLX}{Tag 0DLX}]{stacks-project} (using that $f$ is quasifinite). It restricts to a $\CCoh^n(Y) \to \CCoh^n(X)$, which we now check is \'etale over $V_{\inj} \subset \CCoh^n(Y)$.

\'Etaleness of $f_\ast\colon V_{\inj} \to \CCoh^n(X)$ can be checked as follows. By \cite[\href{https://stacks.math.columbia.edu/tag/0CIK}{Tag 0CIK}]{stacks-project} it is enough to find an algebraic space $W$ along with a faithfully flat, locally finitely presented morphism $\rho\colon W \to \CCoh^n(X)$ such that the projection $W \times_{\CCoh^n(X)} V_{\inj} \to W$ is \'etale. We pick the scheme
\[
W = \Quot_X(\OO_X^{\oplus n},n),
\]
with the smooth morphism $\rho_{\OO_X^{\oplus n},n} \colon W \to \CCoh^n(X)$ from \Cref{lemma:quot-to-coh}. Note that $\rho_{\OO_X^{\oplus n},n}$ is surjective since already its restriction to $A_{X,n}$ is surjective by \Cref{prop:Coh^n-global-quotient}.  The base change
\[
W \times_{\CCoh^n(X)} V_{\inj}
\]
agrees with the open subscheme of $\Quot_{Y}(\OO_{Y}^{\oplus n},n)$ parametrising quotients $\OO_{Y}^{\oplus n} \onto \CF$ such that $f$ is injective on the set-theoretic support of $\CF$. The projection map to $W$ is the \'etale map constructed in \cite[Prop.~A.3]{BR18}. Thus we are done.
\end{proof}

\begin{corollary}\label{cor:etale2}
Let $\boldit{a} = (a_1,\ldots,a_k)$ be a sequence of positive integers summing up to $n$. Then the direct sum of coherent sheaves induces an \'etale morphism
\[
\begin{tikzcd}[row sep=tiny]
U_{\boldit{a}} \arrow{rr}{\oplus} && \CCoh^n(X) \\ ({[}\CE_1{]},\ldots,{[}\CE_k{]}) \arrow[mapsto]{rr} && {[}\CE_1\oplus \cdots \oplus \CE_k{]}
\end{tikzcd}
\]
where $U_{\boldit{a}}$ is the open substack of $\prod_{1\leq i\leq k} \CCoh^{a_i}(X)$ parametrising tuples of sheaves with pairwise disjoint support.
\end{corollary}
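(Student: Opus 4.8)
The plan is to reduce the statement to \Cref{lemma:etale1} by realising the direct-sum map $\oplus$ as the composite of a pushforward along an étale map with a projection/product construction. First I would form the disjoint union $X \sqcup \cdots \sqcup X$ ($k$ copies) and write $g\colon X^{\sqcup k}\to X$ for the fold map, which is étale (indeed a local isomorphism). A $0$-dimensional sheaf on $X^{\sqcup k}$ is canonically a $k$-tuple $(\CE_1,\dots,\CE_k)$ with $\CE_i\in \Coh(X)$, and its length decomposes accordingly; hence for any sequence $\boldit{a}=(a_1,\dots,a_k)$ there is an open and closed substack
\[
\CCoh^{\boldit a}(X^{\sqcup k}) \;\cong\; \prod_{1\le i\le k}\CCoh^{a_i}(X) \;\subset\; \CCoh^n(X^{\sqcup k}),
\]
the component where the piece supported on the $i$-th copy of $X$ has length $a_i$. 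Under this identification, pushing forward a sheaf along $g$ is exactly taking the direct sum $\CE_1\oplus\cdots\oplus\CE_k$ inside $\Coh(X)$, since $g$ restricted to each copy of $X$ is the identity and the images have support in $X$; compatibility with families is immediate because $(g\times\id_T)_\ast$ of a $T$-flat family is again $T$-flat with relative length $n$. So $\oplus$ is literally the restriction of $g_\ast\colon \CCoh^n(X^{\sqcup k})\dashrightarrow \CCoh^n(X)$ to this product component.

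Next I would identify where $g_\ast$ is defined, étale, and how its domain of definition meets the product component. By \Cref{lemma:etale1} applied to the étale map $g$ (note $X$ is separated, being a variety, so the target is fine; one checks $g$ is an étale morphism of varieties in the relevant sense — or simply runs the proof of the lemma verbatim with $X'=X^{\sqcup k}$, which only uses étaleness and the surjective smooth atlas $\Quot_X(\OO_X^{\oplus n},n)$ together with the étale map of \cite[Prop.~A.3]{BR18}), the pushforward $g_\ast$ is an étale morphism on the open substack $V_{\inj}\subset \CCoh^n(X^{\sqcup k})$ of sheaves $\CE$ on which $g$ is injective on $\Supp_{\mathrm{set}}(\CE)$. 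Intersecting $V_{\inj}$ with the open and closed component $\prod_i \CCoh^{a_i}(X)$, and unwinding what injectivity of $g$ on the support means for a tuple $(\CE_1,\dots,\CE_k)$: the copies $\Supp_{\mathrm{set}}(\CE_i)\subset X$ (sitting in distinct sheets of $X^{\sqcup k}$) are mapped injectively by $g$ precisely when $g|_{\text{sheet }i}=\id_X$ is injective — which is automatic — \emph{and} the images $\Supp_{\mathrm{set}}(\CE_i)$ in $X$ are pairwise disjoint. Hence $V_{\inj}\cap \prod_i\CCoh^{a_i}(X) = U_{\boldit a}$, the open substack of tuples with pairwise disjoint support, and the restriction of the étale morphism $g_\ast$ to this open substack is still étale. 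This is exactly the asserted map $\oplus\colon U_{\boldit a}\to \CCoh^n(X)$.

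I expect the only genuine subtlety to be the bookkeeping in the identification $\CCoh^{\boldit a}(X^{\sqcup k})\cong \prod_i\CCoh^{a_i}(X)$ and, relatedly, checking that ``$g$ injective on $\Supp_{\mathrm{set}}(\CE)$'' translates to ``pairwise disjoint supports'' — this needs the observation that two points in different sheets of $X^{\sqcup k}$ have the same image under $g$ iff they are the same point of $X$, so injectivity fails exactly when some $\Supp_{\mathrm{set}}(\CE_i)$ and $\Supp_{\mathrm{set}}(\CE_j)$ ($i\ne j$) share a point, while injectivity \emph{within} a single sheet is free. A secondary point is to confirm that the hypotheses of \Cref{lemma:etale1} are met by $g\colon X^{\sqcup k}\to X$: $X^{\sqcup k}$ is not irreducible, so one should either phrase \Cref{lemma:etale1} for (possibly reducible) separated finite-type $\bfk$-schemes — its proof uses nothing about irreducibility — or handle each component separately and reassemble, which is routine. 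Everything else (well-definedness on families, that $g_\ast$ of the universal family has relative length $n$, compatibility with pullback) is inherited directly from \Cref{lemma:etale1} and its proof.
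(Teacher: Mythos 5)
Your proposal is correct and follows essentially the same route as the paper: the paper's proof also applies \Cref{lemma:etale1} to the fold map from a disjoint union of copies of $X$, identifies $\prod_i\CCoh^{a_i}(X)$ as a component of $\CCoh^n$ of that disjoint union, and restricts the resulting \'etale pushforward to the locus of pairwise disjoint supports, which is exactly $U_{\boldit a}$. Your remark that $X^{\sqcup k}$ is not irreducible (so \Cref{lemma:etale1} must be read for separated finite-type schemes rather than varieties) is a legitimate point that the paper itself glosses over, and your resolution of it is the right one.
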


\begin{proof}
Consider the \'etale map $f\colon Y = \coprod_{1\leq j\leq n}X \to X$. Then we have a stratification
\[
\coprod_{\substack{k\geq 1 \\ a_1+\cdots+a_k=n}}\prod_{i=1}^k \CCoh^{a_i}(X) = \CCoh^n(Y).
\]
The open substack $V_{\inj}$ from \Cref{lemma:etale1} contains the disjoint union of the open substacks $U_{\boldit{a}}$ defined in the statement. The result follows by restriction to the $\boldit{a}$-component.
\end{proof}

The following lemma will turn out useful in the proof of \Cref{prop:local-triviality}.

\begin{lemma} \label{lemma:cart}
Let $f\colon Y\to X$ be an \'etale map of quasiprojective schemes, and denote by $\CCoh^n(Y)_{\inj}$
the open substack of $\CCoh^n(Y)$ which is the inverse image of $\Sym^n(Y)_{\inj}$.
The commutative diagram
\[
\begin{tikzcd}[column sep=large]
\CCoh^n(Y)_{\inj} \arrow{r}{f_\ast} \arrow{d}& \CCoh^n(X)\arrow{d}\\
\Sym^n(Y)_{\inj} \arrow{r}{\Sym^n(f)} & \Sym^n(X)
\end{tikzcd}
\]
is cartesian.
\end{lemma} 

\begin{proof}
Consider the induced diagram
\[
\begin{tikzcd}[row sep=large]
\CCoh^n(Y)_{\inj} \arrow{r}{\gamma}\arrow{dr}\arrow[bend left=19]{rr}[description]{f_\ast}& \CCoh^n(X)\times_{\Sym^n(X)}\Sym^n(Y)_{\inj}\MySymb{dr}\arrow{r}{\lambda}\arrow{d}& \CCoh^n(X)\arrow{d}\\
&\Sym^n(Y)_{\inj} \arrow{r}{\Sym^n(f)} & \Sym^n(X)
\end{tikzcd}
\]
and note that $\Sym^n(f)$ is \'etale. Thus its base change $\lambda$ is also \'etale. On the other hand, $f_\ast$ is \'etale by \Cref{lemma:etale1}, therefore 
\[
\begin{tikzcd}
\gamma \colon \CCoh^n(Y)_{\inj} \arrow{r} & \CCoh^n(X)\times_{\Sym^n(X)}\Sym^n(Y)_{\inj}
\end{tikzcd}
\]
is also \'etale. But $\gamma$ is bijective on $\bfk$-points, since $f$ is an isomorphism when restricted to fully punctual $0$-dimensional closed subschemes. In a little more detail, let 
\[
\left(\CG,\sum_ja_jy_j\right)\in \CCoh^n(X)\times_{\Sym^n(X)}\Sym^n(Y)_{\inj}
\]
be a $\bfk$-point of the fibre product. By \Cref{basic-lemma} we have canonically defined closed subschemes $\iota_j \colon W_j \into X$ such that $\CG = \bigoplus_j \iota_{j\ast}\CG|_{W_j}$ with $W_{j,\red} = x_j$ for some point $x_j \in X$, $f(y_j)=x_j$ and $\CG|_{W_j}$ of length $a_j$ for all $j$. For any $j$ there is a unique closed subscheme $Z_j \into Y$ supported in $y_j$, and such that $f(Z_j) = W_j$. The induced morphism $f_j\colon Z_j \to W_j$, namely the restriction of $f$, is an isomorphism. It is then clear that the only $\bfk$-point in $\CCoh^n(Y)_{\inj}$ mapping to $(\CG,\sum_ja_jy_j)$ under $\gamma$ is the one corresponding to $\CF = \bigoplus_j \CF_j$, where $\CF_j = f_j^\ast \CG|_{W_j}$.

Finally, $\gamma$ is an isomorphism on stabiliser groups, since, in the above notation, we have
\[
\Hom(\CF,\CF)=\bigoplus_j\Hom(\CF_j,\CF_j)=\bigoplus_j \Hom(\CG|_{W_j},\CG|_{W_j})=\Hom(\CG,\CG).
\]
We conclude that $\gamma$ is \'etale, bijective on points and on automorphism groups, hence an isomorphism by  \cite[\href{https://stacks.math.columbia.edu/tag/0DUD}{Tag 0DUD}]{stacks-project}, as required.
\end{proof}

\subsection{Smoothness of \texorpdfstring{$\CCoh^n(X)$}{}}\label{sec:smoothness-of-Coh}
Let $X$ be a smooth $d$-dimensional $\bfk$-variety. By the \'etale-local structure of $\CCoh^n(X)$, to check whether the stack is smooth it is enough to check whether $\CCoh^n(\BA^d)$ is smooth. In a little more detail, one needs to pick \'etale coordinates on $X$ and apply \Cref{lemma:etale1}. One then obtains the following result.

\begin{theorem}\label{thm:smoothness-of-Coh}
Let $X$ be a smooth $d$-dimensional $\bfk$-variety. Then $\CCoh^n(X)$ is smooth if and only if either $d=1$ or $n=1$.
\end{theorem}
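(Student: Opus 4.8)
The plan is to reduce the statement, via the étale-local structure established in \Cref{lemma:etale1}, to the case $X = \BA^d$, and then analyze smoothness of $\CCoh^n(\BA^d)$ directly using the presentation $\CCoh^n(\BA^d) \cong [C(n,d)/\GL_n]$ from \Cref{thm:affine-coh^n}. First I would record the two trivial directions: if $n=1$ then $\CCoh^1(X) = X \times \mathrm{B}\BG_m$ is smooth by \Cref{ex:few-points}; if $d=1$ then $\CCoh^n(\BA^1) \cong [\End_\bfk(\bfk^n)/\GL_n]$ is smooth since $\End_\bfk(\bfk^n) = \BA^{n^2}$ is smooth, and the étale-local statement propagates this to any smooth curve $X$. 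So the content is the converse: if $d \geq 2$ and $n \geq 2$, then $\CCoh^n(X)$ is \emph{not} smooth.

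For the converse, I would argue as follows. Pick a closed point $p \in X$. By choosing étale coordinates near $p$, i.e.\ an étale map $U \to \BA^d$ from an open neighbourhood $U \ni p$ sending $p \mapsto 0$, \Cref{lemma:etale1} (applied to this étale map, after shrinking so that the relevant injectivity hypothesis is met near sheaves supported at $p$, or alternatively combined with \Cref{cor:etale2} to isolate the punctual part) gives an étale morphism from a neighbourhood of $[\OO_p^{\oplus n}]$ in $\CCoh^n(U) = \CCoh^n(X)_U$ to $\CCoh^n(\BA^d)$. Since smoothness of a stack is an étale-local property on its atlas, $\CCoh^n(X)$ is smooth near $[\OO_p^{\oplus n}]$ if and only if $\CCoh^n(\BA^d)$ is smooth near the corresponding point. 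Thus it suffices to show $\CCoh^n(\BA^d)$ is singular when $d, n \geq 2$. Via $[C(n,d)/\GL_n]$, and since $\GL_n$ is smooth, this is equivalent to showing that the commuting variety $C(n,d) \subset \BA^{dn^2}$ is singular — indeed singular at the origin, the point corresponding to the $\GL_n$-fixed sheaf $\OO_0^{\oplus n}$.

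The heart of the proof — and the step I expect to be the main obstacle — is therefore the purely algebro-geometric claim that $C(n,d)$ is not smooth at $0$ for $d \geq 2$, $n \geq 2$. The cleanest route is a tangent-space/dimension count: the Zariski tangent space to $C(n,d)$ at a point $(A_1,\dots,A_d)$ is $\{(B_1,\dots,B_d) : [A_i,B_j] + [B_i,A_j] = 0 \ \forall i<j\}$, which at $(A_1,\dots,A_d) = 0$ is the \emph{entire} ambient space $\End_\bfk(\bfk^n)^{\oplus d}$ of dimension $dn^2$. Hence $C(n,d)$ is smooth at $0$ if and only if $\dim C(n,d) = dn^2$; but $C(n,d)$ is a proper closed subscheme of $\BA^{dn^2}$ (the commutator relations are nontrivial once $n \geq 2$ and $d \geq 2$ — e.g.\ two generic matrices do not commute), so $\dim C(n,d) < dn^2$, forcing $0$ to be a singular point. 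One must be slightly careful that $C(n,d)$ is equidimensional or at least that the component through $0$ has dimension $< dn^2$; since $C(n,d)$ is irreducible for $d=2$ (Motzkin–Taussky, cited in the paper) and in general the locus of $d$-tuples with at least one pair not commuting is a dense open of the ambient space whose complement $C(n,d)$ therefore has strictly smaller dimension, this is fine. For $d > 2$ one can also simply embed the $d=2$ obstruction: restrict to tuples $(A_1, A_2, 0, \dots, 0)$, or observe that $C(n,d) \to C(n,2)$ (forgetting the last $d-2$ matrices) exhibits the singularity. Assembling: for $d, n \geq 2$, $C(n,d)$ is singular at $0$, hence $[C(n,d)/\GL_n] = \CCoh^n(\BA^d)$ is singular, hence $\CCoh^n(X)$ is singular at $[\OO_p^{\oplus n}]$, completing the proof.

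Finally I would remark that this also pins down the singular locus in the interesting cases, and note for the reader that the case analysis above shows the equivalence is sharp: the "only if" uses irreducibility/dimension facts about commuting varieties that are genuinely nontrivial inputs, but for the \emph{smoothness} dichotomy only the elementary tangent-space computation at the origin is needed.
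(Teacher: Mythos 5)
Your proposal is correct and follows essentially the same route as the paper: reduce to $\CCoh^n(\BA^d)$ via étale coordinates and \Cref{lemma:etale1}, dispose of the cases $d=1$ and $n=1$ via \Cref{thm:affine-coh^n} and \Cref{ex:few-points}, and conclude in the remaining cases from the singularity of the commuting variety $C(n,d)$. The one point where you go beyond the paper is welcome: the paper simply asserts that $C(n,d)$ is singular for $n,d\geq 2$, whereas your tangent-space count at the origin (the equations are quadratic, so $T_0C(n,d)$ is all of $\End_\bfk(\bfk^n)^{\oplus d}$, while every component of the proper closed subscheme $C(n,d)\subsetneq \BA^{dn^2}$ has dimension $<dn^2$) supplies a clean, self-contained justification of that assertion.
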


\begin{proof}
The stack $\CCoh^n(\BA^1)$ is smooth for all $n$ by \Cref{thm:affine-coh^n}. The stack $\CCoh^1(\BA^d)$ is smooth for all $d$ by \Cref{ex:few-points}. On the other hand, if $n,d>1$, then $\CCoh^n(\BA^d)$ is singular, because the commuting variety $C(n,d)$ is singular \cite{Hreinsdottir}.
\end{proof}

\subsection{Zariski triviality over strata}\label{sec:stratification}

Let $X$ be a variety. There is a locally closed stratification of the symmetric product 
\begin{equation}\label{eqn:sm-strata-alpha}
\Sym^n(X) = \coprod_{\alpha\vdash n}\Sym^n_\alpha(X)
\end{equation}
by partitions $\alpha$ of the integer $n$. We use the notation $\alpha = (1^{\alpha_1}\cdots i^{\alpha_i}\cdots n^{\alpha_n})$ for a partition, meaning that $\alpha$ consists of $\alpha_i$ parts of size $i$ for every $i=1,\ldots,n$. Of course $\alpha_i$ might be $0$ for one or more values of $i$ (in which case we omit writing `$i^{\alpha_i}$'). Each partition specifies the multiplicities of the supporting points of a given $0$-cycle. For instance, $\Sym_{(1^22^13^1)}^7(X) \into \Sym^7(X)$ parametrises $0$-cycles of the form $p_1+p_2+2p_3+3p_4$, where $p_i\neq p_j$ for every $i\neq j$.

The stratification \eqref{eqn:sm-strata-alpha} allows us to define, for each partition $\alpha\vdash n$, the locally closed substack 
\[
\begin{tikzcd}
\CCoh^n_\alpha(X) = \CCoh^n(X) \times_{\Sym^n(X)}\Sym^n_\alpha(X) \arrow[hook]{r} & \CCoh^n(X),
\end{tikzcd}
\]
parametrising sheaves whose support is distributed according to $\alpha$.
Note that, for any given point $p \in X$, the closed substack
\[
\CCoh^n(X)_p = (\supp_X^n)^{-1}(n\cdot p) \subset \CCoh^n(X)
\]
is naturally a closed substack of the stratum $\CCoh^n_{(n^1)}(X) = \CCoh^n(X) \times_{\Sym^n(X)}X$, where we view $X \into \Sym^n(X)$ embedded as the diagonal, sending $p \mapsto n\cdot p$.

\begin{theorem}\label{prop:local-triviality}
Let $X$ be a smooth variety of dimension $d$.
\begin{itemize}
\item [\mylabel{punct-1}{(1)}] If $X=\BA^d$, the projection
\[
\begin{tikzcd}
\supp^n_{\BA^d} \colon \CCoh^n_{(n^1)}(\BA^d) \arrow{r} & \BA^d
\end{tikzcd}
\]
is a \emph{trivial} fibration with fibre $\CCoh^n(\BA^d)_0$, where $0 \in \BA^d$ denotes the origin.
\item [\mylabel{punct-2}{(2)}] The projection 
\[
\begin{tikzcd}
\supp^n_{X} \colon \CCoh^n_{(n^1)}(X) \arrow{r} & X
\end{tikzcd}
\]
is \emph{Zariski locally trivial} with fibre $\CCoh^n(\BA^d)_0$.
\item [\mylabel{punct-3}{(3)}] If $\alpha = (1^{\alpha_1}\cdots i^{\alpha_i}\cdots n^{\alpha_n})$ is a partition of $n$, then
\[
\begin{tikzcd}
\supp^n_X \colon \CCoh^n_\alpha(X) \arrow{r} & \Sym^n_\alpha (X)
\end{tikzcd}
\]
is \emph{\'etale locally trivial} with fibre $\prod_i \CCoh^i(\BA^d)_0^{\alpha_i}$.
\end{itemize}
\end{theorem}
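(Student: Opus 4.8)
The plan is to handle the three parts in turn, each reducing to the previous one; throughout I use the atlas $\CCoh^n(\BA^d)\cong[C(n,d)/\GL_n]$ of \Cref{thm:affine-coh^n} and the presentation $\CCoh^n(\BA^d)_0=[C(n,d)^{\mathsf{nilp}}/\GL_n]$ from \Cref{sec:coh-to-chow-affine-and-punctual}. For part (1) the point is that $\BA^d$ is a group: under this atlas, the closed stratum $\CCoh^n_{(n^1)}(\BA^d)$ corresponds to the $\GL_n$-invariant closed subscheme $C(n,d)^{(n^1)}\subset C(n,d)$ of tuples $(A_1,\dots,A_d)$ each $A_i$ of which has a single eigenvalue — this is exactly the reading of the Coh-to-Chow map recalled in \Cref{sec:coh-to-chow-affine-and-punctual}: the cycle equals $n\cdot p$ precisely when every $A_i$, after simultaneous triangularisation, has constant diagonal. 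Then
\[
C(n,d)^{(n^1)}\;\xrightarrow{\ \sim\ }\;\BA^d\times C(n,d)^{\mathsf{nilp}},\qquad (A_i)_i\longmapsto\Bigl(\bigl(\tfrac1n\operatorname{tr}A_i\bigr)_i,\ \bigl(A_i-\tfrac1n(\operatorname{tr}A_i)\mathrm{Id}\bigr)_i\Bigr)
\]
is an isomorphism of schemes, with inverse $((\lambda_i)_i,(N_i)_i)\mapsto(N_i+\lambda_i\mathrm{Id})_i$, and it is $\GL_n$-equivariant for simultaneous conjugation, the action on the $\BA^d$-factor being trivial. Passing to quotient stacks gives $\CCoh^n_{(n^1)}(\BA^d)\cong\BA^d\times\CCoh^n(\BA^d)_0$ over $\BA^d$, and the projection to $\BA^d$ is $\supp^n_{\BA^d}$ by the explicit form of the cycle map. (This is where $\operatorname{char}\bfk=0$, or at least $n$ invertible, enters.)

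For part (2): since $\CCoh^n_{(n^1)}(X)\times_X U\cong\CCoh^n_{(n^1)}(U)$ for any open $U\subset X$ (via \Cref{cor:Coh(U)-is-open} and the description $\CCoh^n(U)=\CCoh^n(X)\times_{\Sym^n(X)}\Sym^n(U)$), and a smooth $d$-fold $X$ is covered by Zariski opens $U$ admitting étale maps $\phi\colon U\to\BA^d$, it suffices to show that for such $\phi$ the square with top arrow $\phi_\ast\colon\CCoh^n_{(n^1)}(U)\to\CCoh^n_{(n^1)}(\BA^d)$, bottom arrow $\phi$, and vertical arrows the support morphisms, is cartesian: combined with part (1) this yields $\CCoh^n_{(n^1)}(U)\cong U\times_{\BA^d}(\BA^d\times\CCoh^n(\BA^d)_0)=U\times\CCoh^n(\BA^d)_0$, i.e.\ the asserted Zariski-local triviality. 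The square commutes because $\cycle(\phi_\ast\CE)=\phi_\ast\cycle(\CE)$. One checks that $\CCoh^n_{(n^1)}(U)=V_{\inj}\times_{\CCoh^n(\BA^d)}\CCoh^n_{(n^1)}(\BA^d)$, where $V_{\inj}\subset\CCoh^n(U)$ is the étale-over-$\CCoh^n(\BA^d)$ locus of \Cref{lemma:etale1} (note $\phi$ is vacuously injective on a one-point support); hence $\phi_\ast$ restricted to this stratum is étale, and therefore so is the comparison morphism $\Theta\colon\CCoh^n_{(n^1)}(U)\to U\times_{\BA^d}\CCoh^n_{(n^1)}(\BA^d)$, being a map over $\CCoh^n_{(n^1)}(\BA^d)$ between two stacks étale over it. Finally $\Theta$ is a monomorphism and surjective: over a point $q\in U$ it restricts to pushforward $\CCoh^n(U)_q\to\CCoh^n(\BA^d)_{\phi(q)}$ between punctual stacks, an equivalence of groupoids since $\phi$ étale gives $\widehat\OO_{\BA^d,\phi(q)}\xrightarrow{\sim}\widehat\OO_{U,q}$ together with $\CCoh^n(Y)_p\cong\CCoh^n(\Spec\widehat\OO_{Y,p})$ (cf.\ \cite{Coh(X)-motivic}); an étale monomorphism that is surjective is an isomorphism.

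For part (3), set $m=\sum_i\alpha_i$ and let $\boldit a=(a_1,\dots,a_m)$ list the value $i$ with multiplicity $\alpha_i$. By \Cref{cor:etale2} the direct sum yields an étale morphism $U_{\boldit a}\to\CCoh^n(X)$; restricting to the open substack $U^\circ_{\boldit a}\subset U_{\boldit a}$ of tuples $(\CE_j)_j$ with each $\CE_j$ supported at a single point (so $\CE_j\in\CCoh^{a_j}_{(a_j^1)}(X)$) and pairwise distinct supports, one obtains identifications
\[
U^\circ_{\boldit a}\;\cong\;\CCoh^n_\alpha(X)\times_{\Sym^n_\alpha(X)}(X^m)^\circ\;\cong\;\Bigl(\textstyle\prod_{j}\CCoh^{a_j}_{(a_j^1)}(X)\Bigr)\times_{X^m}(X^m)^\circ,
\]
where $(X^m)^\circ\subset X^m$ is the complement of the diagonals and the isomorphisms come from decomposing a family with (fibrewise) disjoint supports into a direct sum, exactly as in \Cref{lemma:etale1}. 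The map $(X^m)^\circ\to\Sym^n_\alpha(X)$ is a $\prod_i\FS_{\alpha_i}$-torsor — in particular étale and surjective — since in characteristic $0$ this group acts freely on the distinct locus. By part (2) the right-hand side above is Zariski-locally trivial over $(X^m)^\circ$ with fibre $\prod_j\CCoh^{a_j}(\BA^d)_0=\prod_i\CCoh^i(\BA^d)_0^{\alpha_i}$; descending along the étale surjection $(X^m)^\circ\to\Sym^n_\alpha(X)$ gives the claimed étale-local triviality of $\supp^n_X\colon\CCoh^n_\alpha(X)\to\Sym^n_\alpha(X)$ with that fibre.

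The main obstacle is the cartesianness of the square in part (2): \Cref{lemma:etale1} only provides étaleness of the pushforward, and one must upgrade this to the statement that, fibrewise over the target, pushforward along an étale map is an \emph{equivalence} onto the one-point stratum — which relies on the local identification $\CCoh^n(Y)_p\cong\CCoh^n(\Spec\widehat\OO_{Y,p})$ and on the principle that an étale monomorphism which is surjective is an isomorphism. A secondary point, routine but requiring care, is performing the direct-sum decompositions in parts (2) and (3) at the level of families rather than of points, which is handled by the same disjoint-support bookkeeping already underlying \Cref{lemma:etale1} and \Cref{cor:etale2}.
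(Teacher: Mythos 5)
Your proof is correct, and for parts \ref{punct-2} and \ref{punct-3} it follows essentially the same route as the paper: étale coordinates together with \Cref{lemma:etale1} to exhibit the one-point stratum over $U$ as a pullback of the one-point stratum over $\BA^d$, and the direct-sum map of \Cref{cor:etale2} together with the $\prod_i\FS_{\alpha_i}$-torsor $(X^m)^\circ\to\Sym^n_\alpha(X)$ to descend part \ref{punct-2} to the general stratum. The genuine divergence is in part \ref{punct-1}: the paper trivialises $\supp^n_{\BA^d}$ over the small diagonal by the translation action of $\BA^d$ on itself, constructing the isomorphism $\BA^d\times\CCoh^n(\BA^d)_0\simto\CCoh^n_{(n^1)}(\BA^d)$ directly on $S$-families of sheaves, whereas you work in the presentation $[C(n,d)/\GL_n]$ of \Cref{thm:affine-coh^n} and split off the scalar part $\tfrac1n\operatorname{tr}A_i$ of each matrix. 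Your splitting is a clean piece of $\GL_n$-equivariant linear algebra, but it costs the hypothesis that $n$ be invertible in $\bfk$ (which you rightly flag), while the translation argument is characteristic-free; it also requires the extra identification of the stratum $\CCoh^n_{(n^1)}(\BA^d)$, \emph{as a substack over} $\BA^d$, with the invariant locus of tuples having a single joint eigenvalue, which the family-theoretic argument sidesteps. On the other hand, your justification of the cartesianness of the square in \ref{punct-2} --- the comparison morphism is étale, a monomorphism, and surjective, using the punctual equivalence $\CCoh^n(U)_q\simeq\CCoh^n(\BA^d)_{\phi(q)}$ --- is a legitimate and arguably more robust alternative to the paper's appeal to the unit map $\cF\to\varepsilon_\ast\varepsilon^\ast\cF$, which as literally stated only becomes an isomorphism after restricting to the component of the support lying over the injective locus. (One small remark: the freeness of the $\prod_i\FS_{\alpha_i}$-action on the distinct-point locus in \ref{punct-3} holds in any characteristic; no characteristic-zero hypothesis is needed there.)
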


\begin{proof}
We proceed step by step.

\smallbreak
\noindent
\emph{Proof of }\ref{punct-1}. The family of translations parametrised by $\BA^d$ induces a morphism of stacks
\[
\begin{tikzcd}
    t\colon\BA^d \times \CCoh^n(\BA^d)_0 \arrow{r} & \CCoh^n_{(n^1)}(\BA^d).
\end{tikzcd}
\]
We now check $t$ is an isomorphism. Consider an $S$-valued point of the source, namely a pair $(f,\cF)$ where $f \colon S \to \BA^d$ is a morphism from a $\bfk$-scheme and $\cF \in \Coh(\BA^d \times S)$ is a family of $0$-dimensional sheaves on $\BA^d$, supported entirely at the origin $0 \in \BA^d$. Consider the morphism
\[
\begin{tikzcd}
f_0 \colon \BA^d \times S \arrow{r}{\sim} & \BA^d \times S, \quad (x,s) \mapsto (x+f(s),s).
\end{tikzcd}
\]
The pushforward $f_{0,\,\ast}\cF$ is still $S$-flat and defines an $S$-valued point of $\CCoh^n_{(n^1)}(\BA^d)$. Conversely, given $\cG \in \CCoh^n_{(n^1)}(\BA^d)(S)$, consider the morphism $f_\cG \colon S \to \BA^d$ sending $s \mapsto p_s$, where $\Supp_{\mathrm{set}}(\cG_s) = \set{p_s}$. Now pullback $\cG$ along the isomorphism
\[
\begin{tikzcd}
\BA^d \times S \arrow{r}{\sim} & \BA^d \times S, \quad (x,s) \mapsto (x-p_s,s).
\end{tikzcd}
\]
This gives an $S$-valued point of $\CCoh^n(\BA^d)_0$, which considered together with $f_{\cG}$ yields an $S$-valued point of $\BA^d \times \CCoh^n(\BA^d)_0$. This correspondence establishes an equivalence of groupoids for every $\bfk$-scheme $S$, thus $t$ is an isomorphism of stacks. It is also clear that $\supp_X^n \circ t$ agrees with the first projection onto $\BA^d$. 

\smallbreak
\noindent
\emph{Proof of }\ref{punct-2}. We take \'etale coordinates on $X$, i.e.~we cover $X$ with Zariski open subsets $U\subset X$ equipped with \'etale maps
\[
\begin{tikzcd}
U \arrow{r}{\varepsilon} & \BA^d.  
\end{tikzcd}
\]
Consider the open substack $V_{\inj} \subset \CCoh^n(U)  \subset \CCoh^n(X)$ of sheaves $[\CF] \in \CCoh^n(U)$ such that $\varepsilon|_{\Supp_{\mathrm{set}}(\CF)}$ is injective. By \Cref{lemma:etale1}, the pushforward along $\varepsilon$ yields an \'etale map
\[
\begin{tikzcd}
V_{\inj} \arrow{r}{\varepsilon_\ast} & \CCoh^n(\BA^d).
\end{tikzcd}
\]
The punctual stack $\CCoh^n_{(n)}(U)$ sits inside $V_{\inj}$ as a closed substack, and by construction $\varepsilon_\ast$ restricts to a map
\[
\begin{tikzcd}
\CCoh^n_{(n)}(U)\arrow{r}{\varepsilon_\ast} & \CCoh^n_{(n)}(\BA^d).
\end{tikzcd}
\]
We claim that the commutative square
\[
\begin{tikzcd}
\CCoh^n_{(n)}(U)\arrow{r}{\varepsilon_\ast}\arrow[swap]{d}{\supp^n_U} & \CCoh^n_{(n)}(\BA^d)\arrow{d}{\supp^n_{\BA^d}} \\
U \arrow{r}{\varepsilon} & \BA^d
\end{tikzcd}
\]
is cartesian. Granting this, by \ref{punct-1} we know that the map $\supp^n_{\BA^d}$ is Zariski trivial with fibre $\CCoh^n(\BA^d)_0$, therefore by pullback we obtain the claim. It thus remains to confirm that there is an isomorphism
\[
\begin{tikzcd}
\CCoh^n_{(n)}(U) \arrow{r}{\sim} & \CCoh^n_{(n)}(\BA^d)\times_{\BA^d}U.
\end{tikzcd}
\]
Note that there are natural cartesian diagrams
\begin{equation}
\begin{tikzcd}
U\MySymb{dr} \arrow{r}{\varepsilon} \arrow{d}& \BA^d\arrow{d}\\
\Sym^n(U)_{\inj} \arrow{r} & \Sym^n(\BA^d)
\end{tikzcd}
\qquad 
\begin{tikzcd}
\CCoh^n(U)_{\inj}\MySymb{dr} \arrow{r}{\varepsilon_\ast} \arrow{d}& \CCoh^n(\BA^d)\arrow{d}\\
\Sym^n(U)_{\inj} \arrow{r} & \Sym^n(\BA^d)
\end{tikzcd}
\end{equation}
where the rightmost one was obtained in \Cref{lemma:cart}. We compute
\begin{align*}
\CCoh^n_{(n)}(\BA^d)\times_{\BA^d}U
&=(\CCoh^n(\BA^d) \times_{\Sym^n(\BA^d)}\BA^d) \times_{\BA^d}(\BA^d \times_{\Sym^n(\BA^d)} \Sym^n(U)_{\inj}) \\
&=\CCoh^n(\BA^d) \times_{\Sym^n(\BA^d)} \BA^d \times_{\Sym^n(\BA^d)}\Sym^n(U)_{\inj}\\
&=\CCoh^n(\BA^d) \times_{\Sym^n(\BA^d)}\Sym^n(U)_{\inj}\times_{\Sym^n(\BA^d)}\BA^d \\
&=\CCoh^n(U)_{\inj}\times_{\Sym^n(\BA^d)}\BA^d \\
&=\CCoh^n_{(n)}(U),
\end{align*}
as required.

\smallbreak
\noindent
\emph{Proof of }\ref{punct-3}.
Fix a partition $\alpha$ of $n$. 
Let $V_\alpha \subset \prod_i \CCoh^i(X)^{\alpha_i}$ be the open substack of tuples of sheaves with pairwise disjoint support. Then, by \Cref{cor:etale2}, the direct sum map $V_\alpha \to \CCoh^n(X)$ is an \'etale morphism, and we let $U_\alpha$ denote its image. Consider the base change diagram
\[
\begin{tikzcd}
P_\alpha\MySymb{dr} \arrow{d}\arrow[hook]{r} & V_\alpha \arrow{d}{\textrm{\'etale}} \\
\CCoh^n_\alpha(X)\arrow[hook]{r} & U_\alpha 
\end{tikzcd}
\]
defining the stack $P_\alpha$. Let $W_\alpha \subset \prod_i X^{\alpha_i}$ be the open subscheme parametrising tuples of pairwise distinct points, namely
\[
W_\alpha = \Set{(p_k)_k \in X^{\sum_i\alpha_i}|p_k\neq p_h\mbox{ if }k\neq h}.
\]
Then $P_\alpha$ maps to $W_\alpha$. Indeed, it agrees with the open substack of $\prod_i \CCoh^i_{(i^1)}(X)^{\alpha_i}$ parametrising tuples of (punctual) sheaves with pairwise disjoint support. In particular, the diagram
\[
\begin{tikzcd}
    P_\alpha\arrow[hook]{r}\arrow{d} & \prod_i \CCoh^i_{(i^1)}(X)^{\alpha_i}\arrow{d}{\pi_\alpha} \\
    W_\alpha\arrow[hook]{r} & \prod_i X^{\alpha_i}
\end{tikzcd}
\]
is cartesian.
The automorphism group $G_\alpha$ of the partition $\alpha$ acts freely on $W_\alpha$ and thus yields a surjective \'etale map $W_\alpha \to \Sym^n_\alpha(X)$. Note, also, that $P_\alpha$ is also the fibre product
\[
\begin{tikzcd}
\CCoh^n_\alpha(X)\MySymb{dr} \arrow[swap]{d}{\supp_X^n} & P_\alpha\MySymb{dr} \arrow{l} \arrow{d} \arrow[hook]{r} & \prod_i \CCoh^i_{(i^1)}(X)^{\alpha_i} \arrow{d}{\pi_\alpha} \\
\Sym^n_\alpha(X) & W_\alpha\arrow{l}{\textrm{\'et}} \arrow[hook]{r} & \prod_i X^{\alpha_i}
\end{tikzcd}
\]
where $\pi_\alpha$ is Zariski locally trivial with fibre $\prod_i \CCoh^i(\BA^d)_0^{\alpha_i}$ by \ref{punct-2}, which proves that so is the map $P_\alpha \to W_\alpha$. Reading the leftmost cartesian square completes the proof of the statement.
\end{proof}

%%%%%%%%%%%%%%%%%%%%%%%%%%%%%%%%%%% BIB

\bibliographystyle{amsplain-nodash} 
\bibliography{bib}

\bigskip
\noindent
{\small Andrea T. Ricolfi \\
\address{SISSA, Via Bonomea 265, 34136, Trieste (Italy)} \\
\href{mailto:aricolfi@sissa.it}{\texttt{aricolfi@sissa.it}}
}

\bigskip
\noindent
{\small Barbara Fantechi \\
\address{SISSA, Via Bonomea 265, 34136, Trieste (Italy)} \\
\href{mailto:aricolfi@sissa.it}{\texttt{fantechi@sissa.it}}
}

\end{document}